\providecommand{\noopsort[1]{}}
\numberwithin{equation}{section}
\newtheorem{thm}{Theorem}[section]
\newtheorem{cor}[thm]{Corollary}
\newtheorem{prop}[thm]{Proposition}
\newtheorem{lem}[thm]{Lemma}
\theoremstyle{remark}
\newtheorem{rem}[thm]{Remark}
\newtheorem{example}[thm]{Example}
\theoremstyle{definition}
\newtheorem{defn}[thm]{Definition}
\newcommand{\coloneqq}{\mathrel{\mathop:}=}
\newcommand{\eps}{\varepsilon}
\newcommand{\one}{\mathds{1}}
\newcommand{\R}{\mathds{R}}
\newcommand{\C}{\mathds{C}}
\newcommand{\N}{\mathds{N}}
\newcommand{\cF}{\mathscr{F}}
\newcommand{\cX}{\mathscr{X}}
\newcommand{\cL}{\mathscr{L}}
\newcommand{\cB}{\mathscr{B}}
\newcommand{\cM}{\mathscr{M}}
\newcommand{\md}{\mathbb{d}}
\newcommand{\x}{\mathbb{x}}
\newcommand{\y}{\mathbb{y}}
\newcommand{\z}{\mathbb{z}}
\let\k\undefined
\newcommand{\k}{\mathbb{k}}
\let\S\undefined
\newcommand{\E}{\mathbb{E}}
\newcommand{\T}{\mathbb{T}}
\newcommand{\D}{\mathbb{D}}
\newcommand{\S}{\mathbb{S}}
\newcommand{\Df}{\mathbb{D}_{\mathsf{full}}}
\newcommand{\A}{\mathbb{A}}
\newcommand{\Af}{\mathbb{A}_{\mathsf{full}}}
\newcommand{\af}{A_{\mathsf{full}}}
\let\P\undefined
\newcommand{\P}{\mathbb{P}}
\newcommand{\vt}{\vartheta}
\newcommand{\Th}{\Theta}
\newcommand{\dt}{\partial_{\mathbb{t}}}
\newcommand{\fM}{\mathscr{M}}
\newcommand{\cLbp}{\mathscr{L}_{\mathsf{bp}}}
\DeclareMathOperator{\im}{\mathrm{im}}
\begin{document}
\title[Martingales via evolutionary semigroups]{Martingales and Path-Dependent PDE\lowercase{s} \\ via Evolutionary Semigroups}

\author{Robert Denk}
\email{robert.denk@uni-konstanz.de}
\author{Markus Kunze}
\email{markus.kunze@uni-konstanz.de}
\author{Michael Kupper}
\email{kupper@uni-konstanz.de}
\address{Universit\"at Konstanz, Fachbereich Mathematik und Statistik, 78357 Konstanz, Germany}

\begin{abstract}
In this article, we develop a semigroup-theoretic framework for the analytic characterization of martingales with path-dependent terminal conditions. Our main result establishes that a measurable adapted process of the form
\[
V(t) - \int_0^t \Psi(s)\, ds
\]
is a martingale with respect to an expectation operator $\E$ if and only if a time-shifted version of $V$ is a mild solution of a final value problem involving a path-dependent differential operator that is intrinsically connected to $\E$. We prove existence and uniqueness of strong and mild solutions for such final value problems with measurable terminal conditions using the concept of evolutionary semigroups. To characterize the compensator $\Psi$, we introduce the notion of the $\E$-derivative of $V$, which in special cases coincides with Dupire’s time derivative.  We also compare our findings to path-dependent partial differential equations in terms of Dupire derivatives such as the path-dependent heat equation.
\end{abstract}

\keywords{path-dependent PDEs, final value problems, evolutionary semigroups, full generators, $\E$-martingales, $\E$-derivatives, Dupire derivatives.}

\subjclass[2020]{Primary: 60G48, 47D06; Secondary: 60J35, 35K10}

\maketitle

\section{Introduction}

\subsection*{Markov processes, martingales and PDEs}

There is a strong and natural connection between analysis and stochastics, which becomes particularly evident in the study of \emph{Markov processes}. To illustrate this, let $(Z_t)_{t\geq 0}$ be a homogeneous Markov process with Polish state space $X$, defined on a probability
space $(\Omega, \Sigma, P)$. The associated \emph{transition semigroup} $(S(t))_{t\geq 0}$, which acts on the space $B_b(X)$ of
bounded measurable functions on $X$, encodes the transition probabilities of the Markov process and is linked to the process $(Z_t)_{t\geq 0}$
via the relation
\begin{equation}
\label{eq.Markov1}
E[f(Z_T)\mid  \sigma (Z_s : s\leq t)] = E[f(Z_T) \mid \sigma (Z_t)] = [S(T-t)f](Z_t)\quad \mbox{a.s.}
\end{equation}
for all $0\leq t\leq T$ and $f\in B_b(X)$. From an analytic perspective, the semigroup $S$ offers a link to partial differential equations (PDEs)
via its (infinitesimal) \emph{generator} $A$, defined by $Au \coloneqq \lim_{h\downarrow 0} h^{-1}(S(h)u-u)$ for those functions $u$ for which
this limit exists. Indeed, the unique mild solution of the Cauchy problem $\partial_t u(t) = Au(t)$ with $u(0) = f$ is given by
$u(t) = S(t)f$.

Next, fixing $f\in C_b(X)$, we define 
\begin{equation}
\label{eq.martinale}
M(t) \coloneqq E[ f(Z_T) \mid  \sigma(Z_s : s\leq t)]
\end{equation}
for $t\in [0,T]$. By the tower property of conditional expectation, $M$ is a \emph{martingale}. It follows from
Equation \eqref{eq.Markov1} that $M(t)= u(t, Z_t)$, where $u(t,x) = [S(T-t)f](x)$. Consequently, $u$ solves the \emph{final
value problem} (FVP)
\begin{equation}
\label{eq.fvp1}
    \left\{
    \begin{aligned}
        \partial_tu(t,x) & = -Au(t,x) \quad \mbox{ for } (t,x) \in [0,T)\times X,\\
        u(T,x) & = f(x) \quad \mbox{ for } x\in X.
    \end{aligned}
    \right.
\end{equation}
The identity $M(t) = u(t,Z_t)$ provides both an analytic representation of the martingale and, conversely, a stochastic representation
of the solution to the FVP. 
If $(B_t)_{t\geq 0}$ is a one-dimensional Brownian motion, in which case $Au = \tfrac{1}{2}\partial_x^2 u$, we can obtain Equation \eqref{eq.fvp1}
also directly via It\^{o}'s formula, which yields that $M(t) = u(t, B_t)$ is a martingale if and only if $u$ solves
the (backward) heat equation
\[
\partial_tu(t,x) = -\frac{1}{2}\partial_x^2 u(t,x) \quad\mbox{ for } (t,x) \in [0,T)\times \R
\]
with terminal condition $u(T,x) = f(x)$ for $x\in \R$.

For more information concerning the  connection between Markov processes and martingales on the one hand and semigroups, their generators and PDEs on the other hand, we refer the reader to \cite{cw05, ek, Feller52, Feller54, Jacob05, sv79}.

\subsection*{Path-dependent functions}

The representation $M(t) = u(t,Z_t)$ above applies only to martingales with terminal condition $M(T) = f(Z_T)$. However, in applications
such as finance, one is also interested in pricing \emph{path-dependent derivatives}, where the terminal condition is given by $F(Z)$ for some payoff functional $F$ that depends on the entire path $(Z_t)_{t \in [0,T]}$. To address such cases, we consider the path space $\cX$, consisting of all continuous functions $\x \colon \R \to X$, where $(\x(t))_{t \le 0}$ describes the past and $(\x(t))_{t \ge 0}$ the future evolution. Given $F \in C_b(\cX)$ that depends on the path segment $(\x(t))_{t \in [0,T]}$, we consider the associated martingale
\[
M(t) := E\left[F(Z) \mid \sigma(Z_s : s \le t)\right],
\]
which, by definition, depends on the observed path $(Z_s)_{s \in [0,t]}$. Hence, an almost sure representation of the form $M(t) = u(t, (Z_s)_{s \in [0,t]})$ requires that $u \colon [0,T] \times \cX \to \R$ is defined on the path space $\cX$, rather than the state space $X$. One fruitful approach to link the martingale to a \emph{path-dependent partial differential equation} (PPDE) is through the \emph{functional Itô formula}; see \cite{Cont-Fournie10,ContFournie2023,Cont-Perkowski19,dupire}.
Assuming sufficient smoothness of the terminal condition \cite[Section 4]{Peng-Song-Wang23}, in the case of one-dimensional Brownian motion, this leads to the path-dependent heat equation
\begin{equation}
\label{eq.ppde}
\dt u(t,\x) = -\tfrac{1}{2} \partial^2_{\x} u(t,\x) \quad \text{for } (t,\x) \in [0,T) \times C([0,T];\R),
\end{equation}
with terminal condition $u(T,\x) = F(\x)$, where $\dt$ and $\partial_{\x}$ denote the \emph{Dupire horizontal} and \emph{vertical} derivatives, respectively; see \cite{dupire,Peng-Song-Wang23}. Sobolev solutions for path-dependent equations were introduced in~\cite{PengSong2015}, viscosity solutions have been studied in~\cite{BLT2023,cosso_russo_22,EKTZ2014,zhou_23}, while~\cite{BT2023} provides some regularity results. PPDE involving
Dupire derivatives also appear in the study of Markovian integral equations \cite{kalinin20}.
\medskip

From an analytic perspective, it is natural to ask whether solutions of PPDEs can be represented in terms of semigroups. Such representations not only provide structural insight but also allow the application of powerful tools from semigroup theory; see \cite{EngelNagel2000, Pazy1983}. 
We point out that in Equation \eqref{eq.ppde}, on the left-hand side not the classical time derivative $\partial_t$, but the Dupire time derivative
$\dt$ appears. This is due to the fact that, as a function on the path space, $u$ is typically not differentiable in $t$. This is already the case
in the classical example above, where $M(t) = u(t, B_t)$. Indeed, as the paths of Brownian motion are almost surely not differentiable,
the map $t\mapsto u(t, B_t)$ is typically not differentiable in time, even if $u$ is smooth.

At first sight, this observation makes the representation via semigroups impossible, as these are typically differentiable. 
Nevertheless, in Section \ref{sec:ver Dupire}, 
we show that a suitably shifted version of the function $u$ representing the martingale
can be characterized in terms of a so-called \emph{evolutionary semigroup}, a concept which was recently introduced in \cite{dkk}.

\subsection*{Evolutionary semigroups}
Broadly speaking, evolutionary semigroups extend the notion of transition semigroups associated with Markov processes to more general, possibly non-Markovian, stochastic processes. This is achieved by enlarging the state space so that the original processes becomes Markovian in an extended sense. In this framework, the state space is taken to be the path space restricted to the past, denoted by $\cX^-$, consisting of all paths $\x \in \cX$ that remain constant after time zero. As shown in \cite[Theorem 4.5]{dkk}, any evolutionary semigroup on $B_b(\cX^-)$ is of the form
\begin{equation}\label{eq:itro semigroup}
\S(t)F = \E \Theta_t F,
\end{equation}
where $\E$ is an \emph{expectation operator} and $\Theta_t$ is the \emph{shift operator}. Here, $\Theta_t$
is defined by $[\Theta_tF](\x) \coloneqq F(\vt_t\x)$ for $\x\in \cX$ and $t\in \R$, where  $[\vt_t\x](s) = \x(t+s)$.
As for $\E$, it turns out that $\E^\x[F] := \E[F](\x)$ is a classical expectation of $F \in B_b(\cX)$, describing future probabilistic behavior conditioned on the past path $\x \in \cX^-$.
Expectation operators naturally arise from deterministic and stochastic evolution equations. 
For instance, in the case of Brownian motion, the expectation operator is given by $\E[F](\x) = E[F(\x \otimes_0 B)]$, where $\x \otimes_0 B$ denotes the concatenation of a path $\x$ with Brownian motion $B$ at time $0$; see Example~\ref{ex.wieneroperator}. 
Further examples include deterministic dynamics such as delay differential equations, classical Markovian evolutions, and  stochastic (delay) differential equations, as discussed in \cite[Subsections~6.1--6.3]{dkk}.
We point out that for deterministic and stochastic delay equations there are also alternative semigroup approaches on path-like spaces 
available in the literature, see \cite{bp05, bddm07, ct20, dvv95, mohammed84}.

\subsection*{Main results: martingales and FVPs}

Given an expectation operator $\E$ associated with an evolutionary semigroup $\S$, we naturally arrive at a canonical notion of a martingale:
A measurable adapted process $V: [0,T]\times \cX \to \R$ is called an \emph{$\E$-martingale}, if
\[
V(s) = \E_s V(t)\quad\mbox{ for all } 0\leq s \leq t\leq T,
\]
where $\E_s = \Theta_s\E\Theta_{-s}$ represents the conditional expectation given the path up to time $s$. The process $V$ is called
adapted (or nonanticipative) if for every $t$ the value $V(t,\x)$ depends only on the path segment $(\x(s))_{s\leq t}$. 
As discussed in Section \ref{sect.martingale}, see in particular Lemma \ref{l.pmartingale}, $\E$-martingales are closely related
to the classical notion of a martingale: If $V$ is an $\E$-martingale, then it is a classical $\E^\x$-martingale for every $\x\in \cX^-$,
and the two notions are equivalent under a full support condition.\smallskip

As we will see, $\E$-martingales are closely related to solutions of final value problems of the form
\[
\partial_t U(t) = - \A U(t) + \Phi(t) \quad \mbox{for } t\in [0,T), \quad U(T) = F,
\]
where $\A$ denotes the generator of the evolutionary semigroup $\S$. As a matter of fact, to be able to allow for measurable (rather than continuous) martingales, we have to consider $F$ and $\Phi$ in $B_b(\cX)$, rather than $C_b(\cX)$. For this, we have to work with the so-called
\emph{full generator} $\Af$, introduced in \cite{ek} for transition semigroups of Markov processes and extended to our setting in Definition \ref{d.transition} below. As $\Af$ is typically multivalued, the above FVP has to be understood as the differential inclusion
\[
\partial U(t) \in - \Af U(t) + \Phi(t) \quad \mbox{ for } t \in [0,T), \quad U(T) = F.
\]
Consequently, we have to extend the classical theory for initial value problems for generators of strongly continuous semigroups
found, e.g.,  in \cite[Section 3.1]{abhn}, \cite[Section II.6]{EngelNagel2000} or \cite[Chapter 4]{Pazy1983}. We extend this theory in two main directions:
from initial to terminal conditions and from strongly continuous semigroups to merely measurable semigroups. While the former is 
rather straightforward, the latter is non-trivial due to the lack of continuity and the fact that the relevant generator is multi-valued.
Proposition \ref{p.existence.af.fvp} establishes existence of mild solutions via the usual variation of constants formula. 
Uniqueness is more subtle and holds only up to a suitable equivalence relation, see Theorem \ref{t.mildsolution}. However, continuous mild solutions are indeed unique. We also give characterizations when a mild solution is actually a strong solution, see Theorems~\ref{t.cb.fvp} and~\ref{t.mildstrong}.\smallskip

In our main result, Theorem \ref{t.compensated}, we prove that the process
\begin{equation}
    \label{eq.martintro}
M(t) \coloneqq V(t) - \int_0^t \Psi(r)\, dr
\end{equation}
is an $\E$-martingale, if and only if the function $U(t) \coloneqq \Th_{-t}V(t)$ is, up to equivalence, a mild solution of the FVP
\begin{equation}
\label{eq.fvpintro}
\partial_t U(t) = - \A U(t) + \Phi(t)\quad \mbox{for } t\in [0,T), \quad U(T) = F,
\end{equation}
where $\Phi(t) = \Th_{-t}\Psi(t)$ and $F= \Th_{-T}M(T)$.

In the particular case where $U\equiv F$ and $\Phi\equiv G$, we see in Corollary \ref{c.continuousmartingaleMarkov} that
the process $\Th_t F - \int_0^t\Th_s G\, ds$ is an $\E$-martingale if and only if $F\in D(\Af)$ and $G \in \Af F$.  
This gives a martingale characterization
of the full generator and thus extends \cite[Proposition 4.1.7]{ek} beyond the Markovian setting. However, even within the classical 
Markovian framework discussed in Section \ref{sect:Markov}, the connection between martingales and inhomogeneous FVP yields additional insights:
By Corollary~\ref{c.continuousmartingaleMarkov}, the process $u(t, \x(t)) - \int_0^t \varphi(r, \x(r))\, dr$ is an $\E$-martingale if and only if 
$u$ is a mild solution of $\partial_t u(t) = -A u(t) + \varphi(t)$, where $A$ is the generator of the transition semigroup $S$ obtained by 
restricting $\S$ to functions of the form $f(\x(0))$. Note that, similar to \eqref{eq.fvp1}, this is an equation on the state space $X$.

The martingale characterizations obtained in this work are particularly relevant to martingale problems (see \cite{ek, sv79}), 
which will be further investigated within the current path-dependent and non-Markovian framework in future research. 
For backward SDEs \cite{PardouxPeng1990} with non-Markovian terminal conditions, the link in \cite{PengWang2016} to PPDEs indicate also a natural connection with the FVP \eqref{eq.fvpintro}, which will be explored further. In a nonlinear context, a one-to-one correspondence between convex expectations and convex semigroups was already established in~\cite{ck25}.

\subsection*{Connection to PPDEs with Dupire derivatives}

We have already pointed out that formulating an PPDE for an adapted process $V$ is delicate, as the map $t\mapsto V(t)$ is generally not
differentiable. In the approach via the functional It\^{o} formula, this problem is solved by switching from the usual time derivative
$\partial_t$ to the Dupire time derivative $\dt$. In our approach, we switch to the time-shifed process
$U(t) = \Th_{-t}V(t)$, which satisfies a FVP and is classically time-differentiable at least in the case of a strong solution,
see Theorem \ref{t.mildstrong}. Since the processes $U$ and $V$ are related through a simple time shift, it is natural to study 
their associated equations in parallel. This is the focus of Section \ref{sec:adapted}.\smallskip

Our central notion is that of the so-called $\E$\emph{-derivative}, associated to a given expectation operator $\E$. For an
adapted process $V$, it is defined by
\[
\partial_t^\mathbb{E}V(t)\coloneqq \lim_{h\downarrow 0} \E_t \Big[\frac{V(t+h)-V(t)}{h}\Big].
\]
If $\E$ is associated to a deterministic evolution, see Example \ref{ex.detmart}, then $\partial_t^\mathbb{E}$ is indeed
a time-derivative: It is the derivative in direction of the `characteristics' prescribed by the evolution.
If $[\E F](\x) = F(\tau(\x))$, where $\tau: \cX \to \cX$ is the stopping map, defined by $[\tau(\x)](t) = \x(t\wedge 0)$, the
$\E$-derivative coincide with Dupire's time derivative $\dt$. In this particularly simple case of deterministic evolution, the characteristics
are constant. But also in the general setting, the $\E$-derivative is an important notion. As we show in Theorem \ref{t:Edifferentiable},
an adapted process $V$ can be compensated to an $\E$-martingale if and only if it is continuously $\E$-differentiable.
In this case, the compensator is exactly the $\E$-derivative.\smallskip

As for the relation of the equations for $U$ and $V$, our main result is Theorem \ref{t.martDupire}, which shows that for
an $\E$-martingale of the form \eqref{eq.martintro}, where $V$ is differentiable in the Dupire sense and $U(t) \coloneqq \Th_{-t}V(t)$
is pointwise continuously differentiable, it holds $\Theta_{-t}V(t)\in D(\A)$ for all $t\in [0,T)$ and
\begin{equation}\label{eq:intro V}
\dt V(t) = \Theta_t(\mathbb{L} - \A)\Theta_{-t} V(t) + \Psi(t) \quad \text{for all } t \in [0,T),
\end{equation}
where $\mathbb{L}$ denotes the generator of the evolutionary semigroup associated with the stopping expectation operator. 
As a corollary (see Corollary \ref{c.martDupire})
we obtain that $\E$-martingales of the form $u(t, \x(t)) - \int_0^t \Psi(s, \x)\,ds$, with 
$u \in C^{1,0}([0,T]\times X)$, automatically satisfy spatial regularity, reflecting the classical smoothing effect of parabolic equations.

Equation~\eqref{eq:intro V} is the analogue for $V$ of the FVP~\eqref{eq.fvpintro} for the process $U$.
In the special case of a martingale (i.e., $\Psi \equiv 0$) and when $\A$ is the generator of Brownian motion, Equation \eqref{eq:intro V}
reduces to the path-dependent heat equation. Thus, in this situation, the operator $\Theta_t(\mathbb{L} - \A)\Theta_{-t}$ equals
$-\tfrac{1}{2} \partial^2_{\x}$, i.e., Dupire's second order vertical derivative. The connection between $\E$-martingales
and PPDEs in terms of Dupire derivatives is further explored in Section \ref{sec:ver Dupire} in an It\^{o}-diffusion setting.

\subsection*{Organization}
The paper is structured as follows. Section~\ref{sec:transition semigroups} reviews transition semigroups with a focus on measurable functions and recalls key results on evolutionary semigroups from~\cite{dkk}. Section~\ref{sec:FVP} investigates inhomogeneous FVPs, 
emphasizing mild and strong solution concepts. 
Section~\ref{sect.martingale} presents the main characterization results for $\E$-martingales and their representation through the FVP, 
which are then specialized to the classical Markovian case in Section~\ref{sect:Markov}. 
Section~\ref{sec:adapted} examines the connection between analytic descriptions of adapted processes, particularly via Dupire derivatives, 
and their transformed versions solving the FVP. Illustrative examples are provided throughout the text rather than in a separate section.

\subsection*{Acknowledgements}
The authors are grateful to David Criens for valuable discussions.

\section{Transition semigroups}\label{sec:transition semigroups}

Throughout, let $(X,d)$ be a Polish space with Borel $\sigma$-algebra $\mathscr{B}(X)$. Let $B_b(X)$ and $C_b(X)$ denote the spaces of bounded measurable functions and bounded continuous functions on $X$, respectively. These spaces, endowed with the supremum norm $\|\cdot\|_\infty$, are Banach spaces. 
Furthermore, the space $\fM(X)$ of bounded signed measures on $X$, with the total variation norm $\|\cdot\|_{\rm TV}$, is also a Banach space. We consider $B_b(X)$ and $\fM(X)$ in duality through the canonical dual pairing
\[
\langle f, \mu \rangle \coloneqq \int_X f\, d\mu.
\]

\subsection{Kernel operators}
A \emph{kernel} on $X$ is a map $k: X \times \cB(X) \to \R$, which satisfies the following conditions: 
\begin{itemize}
\item[(i)] $x \mapsto k(x, A)$ is measurable for all $A \in \cB(X)$.
\item[(ii)] $A \mapsto k(x, A)$ is a signed measure for all $x \in X$.
\item[(iii)] $\sup_{x \in X} |k|(x, X) < \infty$. 
\end{itemize}
Here, $|k|(x, \cdot)$ denotes the total variation of $k(x, \cdot)$. If every measure $k(x, \cdot)$ is a probability measure, then $k$ is called a \emph{Markovian kernel}.
\smallskip

Given a kernel $k$, we can define a bounded linear operator $K$ on $B_b(X)$ by setting
\begin{equation}\label{eq.representation}
[Kf](x) \coloneqq \int_X f(y) k(x, dy) \quad \text{for all } x \in X, f \in B_b(X).
\end{equation}
Likewise, we can define a bounded linear operator $K'$ on $\fM(X)$ by
\begin{equation}\label{eq.representation2}
[K'\mu](A) \coloneqq \int_X k(x, A)\, \mu(dx)\quad \text{for all } A\in \cB(X), \mu \in \fM(X).
\end{equation}
We call operators of this form \emph{kernel operators} and $k$ the \emph{associated kernel}.  The operators $K$ and $K'$ are in duality to each other in the sense that
\[
\langle Kf, \mu\rangle = \langle f, K'\mu\rangle
\]
for all $f\in B_b(X)$ and $\mu \in \fM(X)$. Direct verification shows that the operator norms satisfy  
$ \|K\| = \|K'\| = \sup_{x \in X} |k|(x, X) $;  
see also Example 2.3 and Example 2.4 in \cite{kunze11}.

\smallskip
In Equation \eqref{eq.representation}, it may happen that $Kf$ is continuous for all $f \in C_b(X)$. This holds if and only if the map $x \mapsto k(x, \cdot)$ is continuous with respect to the weak topology $\sigma(\fM(X), C_b(X))$, in which case $K$ defines a bounded linear operator on $C_b(X)$. We  call such an operator a kernel operator on $C_b(X)$. 
We note that any bounded linear operator \( K \) on \( C_b(X) \) given by \eqref{eq.representation} for \( f \in C_b(X) \) can be extended to a bounded linear operator on \( B_b(X) \) by applying \eqref{eq.representation} for \( f \in B_b(X) \).
We will not distinguish between a kernel operator on $C_b(X)$ and 
its extension to $B_b(X)$. \smallskip

We say that a sequence $(f_n)_{n\in \N} \subset B_b(X)$  
converges to $f \in B_b(X)$ \emph{in the bp-sense} if  
$\sup_{n\in \N} \|f_n\|_\infty < \infty$ and $f_n(x) \to f(x)$ for all $x \in X$. Here, the abbreviation bp stands for \emph{bounded and pointwise}
An operator $T$ on $B_b(X)$ is called \emph{bp-continuous} if  
$f_n \to f$ in the bp-sense implies that $Tf_n \to Tf$ in the bp-sense.  
We denote by $\cLbp(V)$   
the spaces of bounded bp-continuous operators on $V\in\{B_b(X),C_b(X)\}$.

\begin{lem}\label{l.kernelop}
A bounded linear operator $K$ on $V\in\{B_b(X),C_b(X)\}$ is a kernel operator if and only if $K$ is bp-continuous.
\end{lem}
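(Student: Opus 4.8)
=== PROOF PROPOSAL ===

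The plan is to prove both implications of the characterization, treating the two directions separately since they require different techniques. The easier direction is that every kernel operator is bp-continuous. Suppose $K$ is a kernel operator with associated kernel $k$, and let $f_n \to f$ in the bp-sense, so that $\sup_n \|f_n\|_\infty =: M < \infty$ and $f_n(y) \to f(y)$ for every $y$. Then for each fixed $x$ the functions $f_n$ are bounded by $M$ and converge pointwise to $f$ with respect to the finite signed measure $k(x,\cdot)$. I would invoke the dominated convergence theorem (applied to the total variation $|k|(x,\cdot)$, which is a finite positive measure by condition (iii)) to conclude $[Kf_n](x) = \int f_n\, dk(x,\cdot) \to \int f\, dk(x,\cdot) = [Kf](x)$ for every $x$. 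The uniform bound $\|Kf_n\|_\infty \le \|K\|\, M$ follows from boundedness of $K$, so $Kf_n \to Kf$ in the bp-sense, giving bp-continuity.

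The harder direction is the converse: every bounded bp-continuous operator $K$ is a kernel operator, i.e.\ admits a representation \eqref{eq.representation}. The natural approach is to \emph{construct} the kernel from $K$. For each $x\in X$, the map $f \mapsto [Kf](x)$ is a bounded linear functional on $B_b(X)$ (or $C_b(X)$), and I would want to represent it by a measure $k(x,\cdot)$. On $C_b(X)$ over a Polish space one cannot directly apply Riesz representation (the dual of $C_b$ is larger than $\fM(X)$), so bp-continuity is exactly what rules out the pathological finitely-additive pieces and forces countable additivity. The candidate definition is
\[
k(x,A) \coloneqq [K\one_A](x) \quad \text{for } A \in \cB(X),
\]
in the $B_b(X)$ case, where $\one_A$ is the indicator. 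I would then verify the three kernel axioms: measurability of $x\mapsto k(x,A)$ is immediate since $K\one_A \in B_b(X)$; finite additivity of $A\mapsto k(x,A)$ follows from linearity of $K$ and $\one_{A\cup B}=\one_A+\one_B$ for disjoint $A,B$; and \textbf{countable additivity} is precisely where bp-continuity enters — if $A_n \uparrow A$ then $\one_{A_n}\to\one_A$ in the bp-sense, so $k(x,A_n)=[K\one_{A_n}](x)\to[K\one_A](x)=k(x,A)$, which upgrades finite to countable additivity. The uniform total-variation bound (iii) follows from $\|K\|<\infty$.

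Having built $k$, the remaining task is to show that the kernel operator $\tilde K$ defined by \eqref{eq.representation} from this $k$ actually equals $K$ on all of $B_b(X)$. By construction they agree on indicators $\one_A$, hence on simple functions by linearity. For general $f\in B_b(X)$ I would approximate $f$ by simple functions $f_n$ in the bp-sense (any bounded measurable $f$ is a bp-limit of simple functions) and use that \emph{both} $K$ and $\tilde K$ are bp-continuous — the latter by the first half of the proof — so $Kf = \lim Kf_n = \lim \tilde K f_n = \tilde K f$. For the $C_b(X)$ case one argues slightly differently, since indicators are not continuous: here I would either reduce to the $B_b$ statement via the noted extension of $C_b$-kernel operators to $B_b$, or represent each functional $f\mapsto[Kf](x)$ on $C_b(X)$ by a Radon measure using tightness arguments that bp-continuity supplies. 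The main obstacle throughout is this countable-additivity step and the $C_b$ subtlety about passing from a bounded functional to a genuine (countably additive) measure; I expect the cleanest route is to establish the $B_b(X)$ case fully and then deduce the $C_b(X)$ case by restriction and the extension remark already recorded in the text.
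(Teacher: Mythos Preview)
The paper does not actually supply a proof of this lemma; it simply cites \cite[Lemma~A.1]{bk24} for $V=C_b(X)$ and \cite[Lemma~5.1]{akk16} for $V=B_b(X)$. Your sketch is the standard argument behind those references and is essentially correct for the $B_b(X)$ case: dominated convergence for the forward direction, and for the converse defining $k(x,A)=[K\one_A](x)$, using bp-continuity to upgrade finite to countable additivity, and extending from simple functions by bp-approximation.

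One caveat on the $C_b(X)$ case: your suggested ``cleanest route'' of deducing it from the $B_b(X)$ result via the extension remark in the text is circular as stated, since that extension is \emph{through} the kernel representation, which is precisely what you are trying to establish. The honest argument is your other suggestion: for each $x$, the functional $f\mapsto [Kf](x)$ on $C_b(X)$ satisfies $\Lambda_x(f_n)\to 0$ whenever $f_n\to 0$ in the bp-sense, and on a Polish space this sequential continuity forces $\Lambda_x$ to be represented by a (tight, countably additive) signed measure $k(x,\cdot)\in\fM(X)$ --- this is a Daniell--Stone or Alexandrov-type representation and is where the real work lies. Measurability of $x\mapsto k(x,A)$ then needs a separate monotone-class check, since indicator functions are not in $C_b(X)$. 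So the $C_b$ case is not a free consequence of the $B_b$ case; it requires its own (short but genuine) representation argument, which is why the paper cites two separate references.
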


\begin{proof}
See \cite[Lemma A.1]{bk24} for  $V = C_b(X)$ and \cite[Lemma 5.1]{akk16} for $V = B_b(X)$.
\end{proof}

\subsection{Transition semigroups on \texorpdfstring{$B_b(X)$}{}} 
\begin{defn}\label{d.transition}
A \emph{transition semigroup} is a family $S=(S(t))_{t\geq 0}\subset \cLbp(B_b(X))$ of Markovian kernel operators satisfying:
\begin{enumerate}
    \item[(i)] $S(t+s) = S(t)S(s)$ for all $t, s \geq 0$, and $S(0) = \mathop{\mathrm{id}}$.
    \item[(ii)] For every $f \in B_b(X)$, the map $(t, x) \mapsto [S(t)f](x)$ is jointly measurable.
\end{enumerate}
The \emph{full generator} of $S$ is the multi-valued operator $\af$, defined by $f \in D(\af)$ and $g \in \af f$ if and only if
\begin{equation}
\label{eq.fullgen}
[S(t)f](x) - f(x) = \int_0^t [S(s)g](x)\, ds \qquad \mbox{for all }t>0, x \in X.
\end{equation}
\end{defn}

As a consequence of condition (ii), the integral on the right-hand side of \eqref{eq.fullgen} is well-defined and measurable as a function of $x$. From now on, all integrals will be understood pointwise unless stated otherwise.
In particular, condition (ii) allows us to define additional time integrals involving the semigroup $S$, such as the \emph{Laplace transform} $(R(\lambda))_{\lambda >0}$ of $S$, which is defined by
\begin{equation}
\label{eq.laplacetransform}
[R(\lambda)f](x) \coloneqq \int_0^\infty e^{-\lambda t}[S(t)f](x)\, dt \qquad \mbox{for all }f\in B_b(X), \lambda >0.
\end{equation}

It is easy to see that $R(\lambda)\in \cLbp(B_b(X))$ and $\|R(\lambda)\| = \lambda^{-1}$. Integrating \eqref{eq.laplacetransform}
with respect to a measure $\mu\in \fM(X)$, it  follows that any transition semigroup is an \emph{integrable semigroup}
on the norming dual pair $(B_b(X), \fM(X))$ in the sense of \cite[Definition 5.1]{kunze11}. It follows from
\cite[Proposition 5.2]{kunze11}, that the family $(R(\lambda))_{\lambda>0}$ is a \emph{pseudo-resolvent},
i.e., it satisfies the \emph{resolvent identity}
\begin{equation}
\label{eq.resolventidentity}
R(\lambda_1) - R(\lambda_2) = (\lambda_2 - \lambda_1)R(\lambda_1)R(\lambda_2)\qquad \text{for all } \lambda_1,\lambda_2>0.
\end{equation}
However, $R(\lambda)$ is typically not injective and, hence, is not the resolvent of a single-valued operator in general. It follows from \cite[Proposition A.2.4]{haase} that it is the unique resolvent of a multi-valued operator, namely the operator $\af$.
The resolvent identity \eqref{eq.resolventidentity} guarantees that the kernel and the range of $R(\lambda)$ do not depend on $\lambda > 0$. Using the equivalence of (i) and (iii) in Lemma \ref{l.fullgen}(a) below, one can verify that
\[
D(\af) = \operatorname{im} R(\lambda) \quad \text{and} \quad \af (0) = \ker R(\lambda)
\]
for all $\lambda > 0$. For more details and additional information on multi-valued operators, we refer to \cite[Appendix A]{haase}.

As the following example illustrates, the full generator of a transition semigroup is generally multivalued.

\begin{example}
    \label{ex.heat}
    Let us consider the \emph{Gaussian semigroup} $G_\infty$ on $L^\infty(\R^d)$, defined by
    \[
    [G_\infty(t)f](x) \coloneqq (2\pi t)^{-\frac{d}{2}} \int_{\R^d} e^{-\frac{|x-y|^2}{2t}} f(y)\, dy
    \]
    for $f\in L^\infty(\R^d)$. It was shown in \cite[Example 3.7.8]{abhn} that $G_\infty$ defines a holomorphic semigroup on $L^\infty(\R^d)$
    which is not strongly continuous. Its generator is given by $\frac{1}{2}\Delta_\infty$, where
    \[
    D(\Delta_\infty) \coloneqq  \{ u\in L^\infty(\R^d) : \Delta u \in L^\infty(\R^d) \}\quad \mbox{and} \quad \Delta_\infty u = \Delta u.
    \]
    Here, $\Delta$ refers to the distributional Laplacian. From the definition, it follows that $G_\infty(t)f \in C_b(\R^d)$
    for all $f\in L^\infty(\R^d)$. This fact implies that $D(\Delta_\infty) \subset C_b(\R^d)$. Moreover, it allows us to lift the 
    semigroup $G_\infty$ to a transition semigroup $G$ on $B_b(\R^d)$, in the sense that if $q: B_b(\R^d) \to L^\infty(\R^d)$ maps a bounded measurable function to its equivalence class modulo equality almost everywhere (a.e.), then $q(G(t)f) = G_\infty(t)q(f)$ for all $f\in B_b(\R^d)$. We refer to \cite[Section 5]{akk16} for a thourough discussion of the relation between $G$ and $G_\infty$, as well as for the verification that $G$ indeed satisfies the assumptions above. Using the defining Equation \eqref{eq.fullgen} and a similar equation
    for the generator of $G_\infty$, it follows that the full generator of $G$ is given by $\frac{1}{2}\Delta_\mathsf{full}$,
    where
    \[
    D(\Delta_\mathsf{full}) = D(\Delta_\infty) \quad\mbox{and}\quad f \in \Delta_\mathsf{full} u \,\mbox{ iff } \, \Delta u = q(f). 
    \]
    In particular, it holds $\Delta_{\mathsf{full}}(0) = \{ f\in B_b(\R^d) : f=0\mbox{ a.e.}\}$. 
\end{example}

The full generator satisfies the following properties:
\begin{lem}\label{l.fullgen}
Let $S$ be a transition semigroup with Laplace transform $(R(\lambda))_{\lambda >0}$ and full generator $\af$.
\begin{enumerate}[\upshape (a)]
\item The following are equivalent:
\begin{enumerate}[\upshape (i)]
\item $f \in D(\af)$ and $g \in \af f$.
\item For every $x \in X$, the function $t \mapsto [S(t)f](x)$ belongs to
$W^{1,\infty}_\mathrm{loc}([0,\infty))$ with $\frac{d}{dt}[S(t)f](x) = [S(t)g](x)$ for almost every $t\in[0,\infty)$.
\item For every $\lambda > 0$, it holds $R(\lambda)(\lambda f - g) = f$, i.e., $(\lambda - \af)^{-1} = R(\lambda)$.
\end{enumerate}
\item For every $f \in B_b(X)$ and $t > 0$, \[\int_0^t S(s)f \, ds \in D(\af)\quad\mbox{and}\quad S(t)f - f \in \af  \int_0^t S(s)f \, ds.\]
\item For $f \in D(\af)$, $g \in \af f$ and $s > 0$, \[S(s)f \in D(\af)\quad\mbox{and}\quad S(s)g \in \af S(s)f.\] Furthermore, the map $s \mapsto S(s)f$ is $\|\cdot\|_\infty$-continuous.
\item The full generator $\af$ is bp-closed, i.e., for $(f_n)_{n\in\N}\subset D(\af)$ and $g_n\in \af f_n$ for all $n\in\N$ with $f_n\to f$ and $g_n\to g$ in the bp-sense, it holds $f\in D(\af)$ with $g\in \af f$.
\end{enumerate}
\end{lem}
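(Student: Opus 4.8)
The backbone of the whole lemma is a single interchange principle: since each $S(t)$ is a kernel operator, equivalently (by Lemma~\ref{l.kernelop}) bp-continuous, Fubini's theorem combined with the joint measurability in Definition~\ref{d.transition}(ii) gives $S(t)\int_a^b h_s\,ds = \int_a^b S(t)h_s\,ds$ pointwise for every bounded, jointly measurable family $(h_s)$. I would isolate this identity at the outset, as it recurs in every part. I would also record that $\|S(t)\|\le 1$, since each $S(t)$ is Markovian and hence $|k_t|(x,X)=1$.

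For part (a), the equivalence (i)$\Leftrightarrow$(ii) is immediate from \eqref{eq.fullgen}: writing $t\mapsto [S(t)f](x) = f(x) + \int_0^t [S(s)g](x)\,ds$ exhibits this map as an indefinite integral of the bounded measurable integrand $s\mapsto [S(s)g](x)$, hence as a locally Lipschitz, i.e.\ $W^{1,\infty}_{\mathrm{loc}}$, function with a.e.\ derivative $[S(t)g](x)$; conversely, membership in $W^{1,\infty}_{\mathrm{loc}}$ makes $t\mapsto [S(t)f](x)$ absolutely continuous, so the fundamental theorem of calculus recovers \eqref{eq.fullgen} (recall $S(0)=\mathrm{id}$). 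For (i)$\Rightarrow$(iii) I would Laplace-transform \eqref{eq.fullgen} in $t$: multiplying by $e^{-\lambda t}$, integrating over $(0,\infty)$ and applying Fubini to the resulting double integral yields $R(\lambda)f = \lambda^{-1}f + \lambda^{-1}R(\lambda)g$, that is $R(\lambda)(\lambda f - g)=f$. The reverse implication (iii)$\Rightarrow$(i) is the delicate point, and I would prove it directly: setting $h:=\lambda f - g$ so that $f=R(\lambda)h$, the interchange principle and the semigroup law give $[S(t)f](x) = e^{\lambda t}f(x) - e^{\lambda t}\int_0^t e^{-\lambda\tau}[S(\tau)h](x)\,d\tau$; differentiating this absolutely continuous function in $t$ gives $\frac{d}{dt}[S(t)f](x) = \lambda[S(t)f](x) - [S(t)h](x) = [S(t)g](x)$ for a.e.\ $t$, and integrating from $0$ recovers \eqref{eq.fullgen}, so $g\in\af f$. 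Alternatively, this equivalence is precisely the statement that $R(\lambda)$ is the resolvent of the multivalued operator $\af$, as recalled from \cite[Appendix~A]{haase}.

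Parts (b) and (c) are then short computations built on the interchange principle and the semigroup law. For (b), with $u=\int_0^t S(s)f\,ds$ I would compute $[S(r)u](x) = \int_0^t [S(r+s)f](x)\,ds = \int_r^{r+t}[S(\sigma)f](x)\,d\sigma$ and check, by additivity of the integral, that $[S(r)u](x)-u(x)$ coincides with $\int_0^r [S(\sigma)(S(t)f-f)](x)\,d\sigma$, which is exactly \eqref{eq.fullgen} for the pair $(u, S(t)f-f)$. For the membership $S(s)g\in\af S(s)f$ in (c), I would apply the kernel operator $S(s)$ to \eqref{eq.fullgen} for $(f,g)$, moving $S(s)$ through the time integral and using $S(s)S(\sigma)=S(\sigma)S(s)$ and $S(s)S(t)=S(t)S(s)$. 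The norm continuity of $s\mapsto S(s)f$ on $D(\af)$ follows from $\|S(s+h)f - S(s)f\|_\infty \le \|S(h)f - f\|_\infty \le h\|g\|_\infty$ (and the analogous left-hand estimate), using $\|S(s)\|\le 1$ together with \eqref{eq.fullgen}.

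Finally, for the bp-closedness in (d) I would pass to the limit in \eqref{eq.fullgen} written for $(f_n,g_n)$: bp-continuity of the fixed operator $S(t)$ sends the left-hand side $[S(t)f_n](x)-f_n(x)$ to $[S(t)f](x)-f(x)$, while on the right-hand side the integrands $[S(s)g_n](x)$ converge pointwise in $s$ to $[S(s)g](x)$ and are uniformly bounded by $\sup_n\|g_n\|_\infty<\infty$, so dominated convergence carries the limit inside $\int_0^t$; the limiting identity is \eqref{eq.fullgen} for $(f,g)$. The main obstacle throughout is not any single deep step but the repeated justification of exchanging the operators $S(t)$ with the pointwise (and, in part (a), improper) time integrals; once the interchange principle is secured from joint measurability and Fubini, each part reduces to elementary manipulations.
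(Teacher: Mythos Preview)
Your proof is correct and follows the same underlying mechanism as the paper's: the defining identity \eqref{eq.fullgen}, Fubini via the kernel representation, and dominated convergence. For parts (c) and (d) your argument is essentially identical to the paper's.

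The main difference is in parts (a) and (b): the paper simply cites \cite[Proposition~5.7]{kunze11} for the equivalence (i)$\Leftrightarrow$(iii) and for (b), whereas you give direct, self-contained arguments. In particular, your proof of (iii)$\Rightarrow$(i) --- writing $f=R(\lambda)h$, computing $[S(t)f](x)=e^{\lambda t}f(x)-e^{\lambda t}\int_0^t e^{-\lambda\tau}[S(\tau)h](x)\,d\tau$ via the semigroup law, and then differentiating --- is a clean elementary route that avoids any appeal to the abstract theory of multivalued operators or to the external reference. This buys self-containment at the modest cost of one extra computation; the paper's approach keeps the exposition shorter but outsources the key step. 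Your explicit isolation of the interchange principle at the start is also a nice organizational choice that the paper leaves implicit until the Fubini justification in part (c).
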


\begin{proof}
(a). (ii) is merely a reformulation of (i) whereas the equivalence of (i) and (iii) follows from \cite[Proposition 5.7]{kunze11}(a).

(b). This is \cite[Proposition 5.7]{kunze11}(b).

(c). Let $f \in D(\af)$, $g \in \af f$ and $s > 0$. For $t>0$, we have
\[
S(t)S(s)f - S(s)f = S(s)\int_0^t S(r)g\, dr = \int_0^t S(r)S(s)g\, dr.
\]
Indeed, for every $x\in X$, it holds $S(s)'\delta_x\in \cM(X)$ as $S(s)$ is a kernel operator. Thus, Fubini's theorem yields
\begin{align*}
\Big[S(s)\int_0^tS(r)g\, dr\Big](x) &= \Big\langle \int_0^t S(r)g\, dr, S(s)'\delta_x\Big\rangle \\ 
& = 
\int_0^t \langle S(r)g, S(s)'\delta_x\rangle \, dr  = \int_0^t [S(s)S(r)g](x)\, dr,
\end{align*}
for all $x\in X$.
By definition, this shows $S(s)f\in D(\af)$ and $S(s)g\in \af S(s)f$. As for the second statement, observe that for $0\leq s < t$, we have
\[
\|S(t)f-S(s)f\|_\infty = \Big\|\int_s^t S(s)g\, ds \Big\|_\infty \leq \int_s^t \|g\|_\infty \, ds = (t-s)\|g\|_\infty. 
\]

(d). Using that $S$ consists of Markovian kernel operators, this follows directly from the defining Equation~\eqref{eq.fullgen} and the dominated convergence theorem.
\end{proof}

If $\mathcal{A}$ is a multi-valued operator on $B_b(X)$, we can define its $\sigma$-adjoint $\mathcal{A}'$ as a multi-valued operator on $\mathcal{M}(X)$ as follows. We say that $\mu \in D(\mathcal{A}')$ and $\nu \in \mathcal{A}'\mu$ if and only if
\[
 \langle g, \mu \rangle = \langle f, \nu \rangle \quad \text{for all } f \in D(\mathcal{A}), g \in \mathcal{A} f.
\]
Basic algebraic manipulations show that $R(\lambda, \mathcal{A}) = (\lambda - \mathcal{A})^{-1} \in \cLbp(B_b(X))$ implies $R(\lambda, \mathcal{A}') = (\lambda - \mathcal{A}')^{-1} = R(\lambda, \mathcal{A})'$; see \cite[Corollary A.4.3]{haase}.
We can use this fact for transition semigroups as follows. If $S = (S(t))_{t \geq 0}$ is a transition semigroup, we may also consider the \emph{adjoint semigroup} $S' = (S(t)')_{t \geq 0}$ on $\cM(X)$. This is an integrable semigroup on the norming dual pair $(\cM(X), B_b(X))$, whose Laplace transform is given by the $\sigma$-adjoint $R(\lambda)'$ of the Laplace transform of $S$. Thus, by the above, the full generator of $S'$ is the $\sigma$-adjoint $\af'$ of the full generator $\af$ of $S$. 
Writing $\af f$ for any element of $\af f$ and $\af'\mu$ for any element of $\af'\mu$, we find the usual relation
\begin{equation}
\label{eq.dualitygen}
\langle \af f, \mu\rangle = \langle f, \af'\mu\rangle\quad\mbox{for all } f\in D(\af), \mu \in D(\af').
\end{equation}

Since the above results are consequences of general results concerning integrable semigroups on norming dual pairs, it follows 
that the results of Lemma \ref{l.fullgen} hold similarly for $\af'$.

\subsection{Transition semigroups on \texorpdfstring{$C_b(X)$}{}} We now consider transition semigroups whose restriction to $C_b(X)$ is continuous in $t$ and $x$. Note that such a semigroup in particular leaves the space $C_b(X)$ invariant.

\begin{defn}
A transition semigroup $S=(S(t))_{t\geq 0}$ is called \emph{$C_b$-semigroup} if for every $f\in C_b(X)$ the map
$(t,x) \mapsto [S(t)f](x)$ is continuous on $[0,\infty)\times X$. If $S$ is a $C_b$-semigroup, its $C_b$-generator $A$ is defined as follows:
for $f,g\in C_b(X)$, it holds 
$f\in D(A)$ and $Af=g$ if and only if
\begin{equation}
\label{eq.cbgen}
[S(t)f](x) - f(x) = \int_0^t [S(s)g](x)\, ds \quad\mbox{for all }t>0, x\in X.
\end{equation}
\end{defn}

By definition, the $C_b$-generator is merely the part of the full generator in $C_b$, i.e., we have $f\in D(A)$ and $Af=g$ if and only if  $f,g\in C_b(X)$, $f\in D(\af)$ and $g\in \af f$. In contrast to the full generator, the $C_b$-generator is single-valued. Indeed, if $f \in C_b(X)$ and \eqref{eq.cbgen} is satisfied with $g\in C_b(X)$ and also with $g$ replaced by $\tilde g \in C_b(X)$, then
\[
\int_0^t[S(s)g](x)\, ds = \int_0^t [S(s)\tilde g](x)\, ds \quad\mbox{ for all } t>0, x\in X.
\]
As $S$ is a $C_b$-semigroup, both integrands are jointly continuous in $t$ and $x$. 
Thus, dividing this equation by $t>0$ and letting $t\to 0$ yields $g(x) = \tilde{g}(x)$ for all $x\in X$. 
We collect some further facts concerning the $C_b$-generator.

\begin{lem}
\label{l.cbgen}
Let $S$ be a $C_b$-semigroup with full generator $\af$ and $C_b$-generator $A$. Then, the following statements hold.
\begin{enumerate}
[\upshape (a)]
\item $A$ is the part of $\af$ in $C_b(X)$, i.e., $f\in D(A)$ and $Af=g$ if and only if
$f,g\in C_b(X)$ and $f\in D(\af)$ with $g\in \af f$. 
\item For $f,g\in C_b(X)$, the following are equivalent:
\begin{enumerate}
[\upshape (i)]
\item $f\in D(A)$ and $Af=g$.
\item $S(t)f-f = \int_0^t S(s)g\, ds$ for all $t>0$.
\item 
$\sup_{t\in (0,1]}\big\|\frac{S(t)f-f}{t}\big\|_\infty<\infty$ and $\frac{S(h)f-f}{h}\to g$ pointwise as $h\downarrow 0$.
\end{enumerate}
\item For $f\in C_b(X)$, it holds $\int_0^t S(s)f\, ds \in D(A)$ and $A\int_0^t S(s)f\, ds = S(t)f-f$.
\end{enumerate}
\end{lem}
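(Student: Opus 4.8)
The plan is to reduce all three assertions to the corresponding statements for the full generator $\af$ established in Lemma~\ref{l.fullgen}, and to supply the extra continuity (membership in $C_b(X)$) from the standing hypothesis that $S$ is a $C_b$-semigroup. Part (a) requires essentially no work: comparing the defining equation \eqref{eq.cbgen} for $A$ with the defining equation \eqref{eq.fullgen} for $\af$, the two conditions coincide, so $f\in D(A)$ with $Af=g$ holds exactly when $f,g\in C_b(X)$ together with $f\in D(\af)$ and $g\in\af f$. This was already recorded in the paragraph preceding the lemma.

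For part (b), the equivalence of (i) and (ii) is again immediate from part (a): (ii) is nothing but \eqref{eq.fullgen} read pointwise, so it says precisely $f\in D(\af)$ with $g\in\af f$, and since $f,g\in C_b(X)$ this is (i). For (ii)$\Rightarrow$(iii) I would divide (ii) by $t$ and use that each $S(s)$ is a Markovian kernel operator, hence a contraction on $B_b(X)$; this gives $\|\tfrac{1}{t}\int_0^t S(s)g\,ds\|_\infty\le\|g\|_\infty$, which is the required uniform bound, and the pointwise convergence $\tfrac1h\int_0^h[S(s)g](x)\,ds\to g(x)$ follows from continuity of $s\mapsto[S(s)g](x)$ at $s=0$ (valid because $S$ is a $C_b$-semigroup and $g\in C_b(X)$), as the average of a continuous function tends to its value at the left endpoint.

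The only genuinely nontrivial implication is (iii)$\Rightarrow$(i), and here I would argue by bp-closedness. Set $f_h\coloneqq\tfrac1h\int_0^h S(s)f\,ds$ and $g_h\coloneqq\tfrac1h(S(h)f-f)$. By Lemma~\ref{l.fullgen}(b), after scaling by $h^{-1}$, we have $f_h\in D(\af)$ and $g_h\in\af f_h$ for every $h>0$. As $h\downarrow0$, continuity of $s\mapsto S(s)f$ at $0$ gives $f_h\to f$ pointwise with $\|f_h\|_\infty\le\|f\|_\infty$, so $f_h\to f$ in the bp-sense; the hypothesis (iii) gives exactly $\sup_h\|g_h\|_\infty<\infty$ and $g_h\to g$ pointwise, i.e. $g_h\to g$ in the bp-sense. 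Since $\af$ is bp-closed by Lemma~\ref{l.fullgen}(d), we conclude $f\in D(\af)$ with $g\in\af f$, and then part (a) upgrades this to $f\in D(A)$, $Af=g$. This is the step I expect to be the crux, as it is where the measurability and multivaluedness of the full generator are tamed by the bp-topology rather than by a norm limit.

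For part (c), Lemma~\ref{l.fullgen}(b) already yields $\int_0^t S(s)f\,ds\in D(\af)$ and $S(t)f-f\in\af\int_0^t S(s)f\,ds$, so by part (a) it only remains to check that both $\int_0^t S(s)f\,ds$ and $S(t)f-f$ lie in $C_b(X)$. The latter is clear since $f\in C_b(X)$ and $S$ leaves $C_b(X)$ invariant. For the former I would use that $(s,x)\mapsto[S(s)f](x)$ is jointly continuous and bounded by $\|f\|_\infty$ on $[0,t]\times X$; continuity of $x\mapsto\int_0^t[S(s)f](x)\,ds$ then follows from dominated convergence along sequences $x_n\to x$. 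With both functions in $C_b(X)$, part (a) delivers $\int_0^t S(s)f\,ds\in D(A)$ and $A\int_0^t S(s)f\,ds=S(t)f-f$.
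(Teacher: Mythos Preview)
Your proof is correct. The one notable difference from the paper is in part (b): the paper simply cites \cite[Theorem~A.5]{bk24} for the equivalence of (i)--(iii), whereas you supply a self-contained argument using the bp-closedness of $\af$ from Lemma~\ref{l.fullgen}(d) for the implication (iii)$\Rightarrow$(i). Your route is more elementary and keeps the proof internal to the paper, at the cost of spelling out the standard averaging argument; the paper's citation is shorter but relies on external machinery. Parts (a) and (c) match the paper's approach exactly (the paper also reduces (c) to Lemma~\ref{l.fullgen}(b) combined with (a)), and your explicit verification that $\int_0^t S(s)f\,ds\in C_b(X)$ via dominated convergence fills in a detail the paper leaves implicit. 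One minor remark: Lemma~\ref{l.fullgen}(d) is stated for sequences, so strictly speaking you should pass to a sequence $h_n\downarrow 0$ before invoking it, but since (iii) gives convergence along \emph{every} such sequence this is harmless.
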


\begin{proof}
(a) was explained above and for (b) see \cite[Theorem A.5]{bk24}. Combining (a) with Lemma \ref{l.fullgen}
yields (c).
\end{proof}

\begin{rem}
    It follows from \cite[Theorem 4.4]{kunze09} that a transition semigroup $S$ is a $C_b$-semigroup, 
    if and only if for $f\in C_b(X)$ the orbit $t\mapsto S(t)f$ is continuous in the strict (or mixed) topology.
    The latter is also often used in the discussion of transition semigroups. We refer to \cite{gnr24} for
    examples of semigroups continuous in the mixed topology, thus for $C_b$-semigroups.
\end{rem}

\subsection{Evolutionary semigroups}

If $(X,d)$ is a Polish space, then the \emph{path space}
$\cX\coloneqq C(\R; X)$, i.e., the set of all continuous functions
$\x: \R\to X$, is also a Polish space when endowed with the metric
\[
\md (\x, \y) \coloneqq \sum_{n=1}^\infty 2^{-n}\big(1\wedge \sup_{t\in [-n, n]} d(\x(t), \y(t))\big).
\]
We may thus also consider transition semigroup on $B_b(\cX)$. Of particular interest is the \emph{shift group}
$(\Th_t)_{t\in \R}$ which is defined as follows. For $t\in \R$, we define the \emph{shift map} $\vt_t: \cX \to \cX$ by
$[\vt_t\x](s) \coloneqq \x(t+s)$. We then define $\Th_t \in \cLbp(B_b(\cX))$ by
\[
[\Th_t F](\x) \coloneqq F(\vt_t\x).
\]
It is not difficult to see that $(\Th_t)_{t\in \R}$ is a transition semigroup in the sense of Definition \ref{d.transition}. In fact,
it is even a \emph{transition group}, as we can allow arbitrary $t\in \R$ instead of merely $t\geq 0$. The full generator of the shift
group is denoted by $\Df$. Actually, the shift group is a $C_b$-group, whence it has a $C_b$-generator denoted by $\D$.
For proofs of these facts and more information about the shift
group and its full generator and $C_b$-generator, we refer to
\cite[Section 3]{dkk}.\smallskip

Here are some additional mappings on $\cX$ of particular interest.
The \emph{evaluation maps} $\pi_t: \cX \to X$ are given by $\pi_t(\x) \coloneqq \x(t)$ for all $t \in \R$. These maps generate the Borel $\sigma$-algebra, i.e., $\mathscr{B}(\cX) = \sigma(\pi_t : t \in \R)$. Furthermore, we can define some additional $\sigma$-algebras as follows. If $I \subset \R$ is an interval, we set $\mathscr{F}(I) \coloneqq \sigma(\pi_t : t \in I)$ and define 
\begin{align*}
B_b(\cX; \mathscr{F}(I)) &\coloneqq \{ F \in B_b(\cX) : F \text{ is } \mathscr{F}(I)\text{-measurable} \}, \\
C_b(\cX; \mathscr{F}(I)) &\coloneqq \{ F \in C_b(\cX) : F \text{ is } \mathscr{F}(I)\text{-measurable} \}.
\end{align*}
The most important cases are when $I = (-\infty, t]$ and $I = [t, \infty)$ for some $t \in \R$. To simplify notation, we write $\mathscr{F}_t \coloneqq \mathscr{F}((-\infty, t])$. 
Furthermore, the \emph{stopping map} $\tau: \cX \to \cX$ is defined by $[\tau(\x)](t) \coloneqq \x(t\wedge 0)$. Combining $\tau$ with the shift $\vt_t$,
we define $\tau_t \coloneqq \vt_{-t} \circ \tau \circ \vt_t$, referred to as the stopping map at time $t$. 
It turns out that a measurable function $F\colon \cX \to \R$ is $\cF_t$-measurable if and only if $F = F \circ \tau_t$; see \cite[Lemma~2.3(c)]{dkk}.\smallskip

We can now recall the concept of an \emph{evolutionary semigroup}, introduced in \cite{dkk}. Broadly speaking, this is a transition
semigroup which for functions that are $\cF_{t_0}$-measurable for some $t_0<0$, behaves initially like the shift semigroup. Before making
this definition rigorous, let us comment on the additional measurability assumption for functions in $B_b(\cX, \cF_0)$. As we remarked above,
a function $F\in B_b(\cX)$ is $\cF_0$-measurable if and only if $F=F\circ\tau$. If we put $\cX^-\coloneqq \tau(\cX)$, then $\cX^-$ is a closed
subspace of $\cX$ and thus also a Polish space with respect to $\md$. Moreover, its Borel $\sigma$-algebra is merely the trace of $\cF_0$
on $\cX^-$, see \cite[Lemma 2.6]{dkk}. We may thus identify $B_b(\cX; \cF_0)$ and $B_b(\cX^-)$ by means of the map $F\mapsto F\circ \tau$.
We note that since $\tau$ is continuous, this establishes also an isomorphism between $C_b(\cX; \cF_0)$ and $C_b(\cX^-)$.

\begin{defn}
An \emph{evolutionary semigroup} is a transition semigroup $\S = (\S(t))_{t\geq 0}$ on $B_b(\cX;\cF_0)$ such that  $\S(t)\Th_{-t}F = F$ 
for all $F\in B_b(\cX; \cF_0)$ and $t\geq 0$. If $\S$ is additionally a $C_b$-semigroup, we say that $\S$ is an \emph{evolutionary $C_b$-semigroup}.
\end{defn}

\begin{defn}
\label{def.expectation}
An \emph{expectation operator} is an operator $\E \in \cLbp (B_b(\cX))$ that satisfies the following properties:  
\begin{enumerate}
[(i)]
\item $\E F \in B_b(\cX; \cF_0)$ for all $F \in B_b(\cX)$.
\item $\E F = F$ for $F \in B_b(\cX; \cF_0)$.
\end{enumerate}
Given an expectation operator $\E$, we define $\E_t$ by setting $\E_t \coloneqq \Th_t \E \Th_{-t}$. We say that
$\E$ is \emph{homogeneous} if $\E \E_t = \E$ for all $t \geq 0$.
\end{defn}

The structure of evolutionary semigroups is described by the following result. For a more detailed discussion, we refer to \cite{dkk}.
\begin{thm}\label{t.evolutionary}
Let $\S$ be an evolutionary semigroup with full generator $\Af$.
\begin{enumerate}
[\upshape (a)]
\item There exists a homogeneous expectation operator $\E$ such that $\S(t) = \E\Th_t$.
Conversely, if $\E$ is a homogeneous expectation operator, then $\T = (\T(t))_{t\geq 0}$, defined by $\T(t)F = \E\Th_tF$ for all 
$F\in  B_b(\cX; \cF_0)$, is an evolutionary semigroup.
\item It holds that $F\in D(\Af)$ with $G\in \Af F$ if and only if there exists $U\in D(\Df)$ with $V\in \Df U$ such that $F=\E U$ and $G=\E V$.
\end{enumerate}
\end{thm}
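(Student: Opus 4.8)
The plan is to prove the two parts separately, establishing (a) first since the identity $\S(t)=\E\Th_t$ together with the homogeneity of $\E$ drives the argument for (b). Throughout I would move the multivalued generator statements to the level of Laplace transforms via Lemma~\ref{l.fullgen}(a), where everything becomes single-valued.

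For the converse direction of (a) I would simply verify that $\T(t)F:=\E\Th_t F$ defines an evolutionary semigroup. Each $\T(t)=\E\Th_t$ is a bp-continuous (Markovian) kernel operator as a composition of such, joint measurability is inherited from $\Th$ and $\E$, and property~(i) of $\E$ gives $\T(t)F\in B_b(\cX;\cF_0)$, so the space is left invariant. The only computation using homogeneity is the semigroup law: from $\Th_t\E=\E_t\Th_t$ (immediate from $\E_t=\Th_t\E\Th_{-t}$ and the group property of $\Th$) and $\E\E_t=\E$ one gets $\T(t)\T(s)F=\E\Th_t\E\Th_s F=\E\E_t\Th_{t+s}F=\E\Th_{t+s}F=\T(t+s)F$, while $\T(0)=\E=\mathrm{id}$ on $B_b(\cX;\cF_0)$ by property~(ii). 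The evolutionary identity $\T(t)\Th_{-t}F=\E\Th_t\Th_{-t}F=\E F=F$ for $F\in B_b(\cX;\cF_0)$ is again property~(ii).

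The forward direction of (a) is the main obstacle, and it is precisely \cite[Theorem 4.5]{dkk}. I would recover $\E$ from $\S$ by first defining it on functions $G$ that are $\cF_t$-measurable for some $t\geq 0$ via $\E G:=\S(t)\Th_{-t}G$; here $\Th_{-t}G\in B_b(\cX;\cF_0)$, so the right-hand side makes sense, and the evolutionary identity $\S(r)\Th_{-r}=\mathrm{id}$ on $B_b(\cX;\cF_0)$ together with the semigroup law shows the value is independent of the admissible $t$. Since $\bigcup_{t\geq 0}B_b(\cX;\cF_t)$ is a constant-containing algebra generating $\cB(\cX)$, it is bp-dense, and the genuinely delicate step is to show that the operator just defined is bp-continuous on this subspace and therefore extends uniquely to $\E\in\cLbp(B_b(\cX))$. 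Properties~(i), (ii), the identity $\S(t)=\E\Th_t$, and homogeneity $\E\E_t=\E$ are then checked on the dense subspace and propagated by bp-continuity; the continuity/extension estimates are the technical heart, for which I would rely on \cite{dkk}.

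For (b) the key observation is the intertwining relation $\E R_{\Df}(\lambda)=R_\S(\lambda)\E$ on all of $B_b(\cX)$, where $R_\S(\lambda)$ and $R_{\Df}(\lambda)$ are the Laplace transforms of $\S$ and of the shift group. This follows from $\E\Th_t W=\S(t)\E W$ for every $W\in B_b(\cX)$ — itself a consequence of $\E\Th_t\E=\E\E_t\Th_t=\E\Th_t$ (homogeneity) combined with $\S(t)=\E\Th_t$ on $B_b(\cX;\cF_0)$ — after pulling $\E$ inside the integral by the Fubini argument already used in the proof of Lemma~\ref{l.fullgen}(c), which is legitimate because $\E$ is a kernel operator. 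With this in hand both implications are immediate via Lemma~\ref{l.fullgen}(a)(iii). If $U\in D(\Df)$, $V\in\Df U$, $F=\E U$, $G=\E V$, then $R_{\Df}(\lambda)(\lambda U-V)=U$; applying $\E$ and intertwining gives $R_\S(\lambda)(\lambda F-G)=F$, so $F\in D(\Af)$ with $G\in\Af F$. Conversely, if $F\in D(\Af)$ and $G\in\Af F$, set $H:=\lambda F-G\in B_b(\cX;\cF_0)$, $U:=R_{\Df}(\lambda)H$, and $V:=\lambda U-H$; then $U\in D(\Df)$ and $V\in\Df U$ by Lemma~\ref{l.fullgen}(a)(iii), while $\E U=R_\S(\lambda)\E H=R_\S(\lambda)H=F$ and $\E V=\lambda\E U-\E H=\lambda F-H=G$, using $\E H=H$ (property~(ii)) and $R_\S(\lambda)H=F$.
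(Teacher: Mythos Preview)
Your proposal is correct. The paper's own proof is merely a citation (``(a) is \cite[Theorem~4.4]{dkk} whereas (b) is proved in \cite[Proposition~4.9]{dkk}''), so there is no in-paper argument to compare against; your write-up actually supplies more detail than the paper does.

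For (a), your verification of the converse direction is straightforward and correct, and you rightly identify the forward direction (constructing $\E$ from $\S$ and extending by bp-density) as the genuinely technical step, deferring it to \cite{dkk} just as the paper does. For (b), your Laplace-transform argument via the intertwining relation $\E R_{\Df}(\lambda)=R_\S(\lambda)\E$ is clean and self-contained: the key identity $\E\Th_t W=\S(t)\E W$ for all $W\in B_b(\cX)$ follows exactly as you say from homogeneity, the Fubini step is justified since $\E$ is a kernel operator, and both implications then reduce to one-line manipulations with Lemma~\ref{l.fullgen}(a)(iii). This is a natural and efficient route; whether or not it coincides with the argument in \cite[Proposition~4.9]{dkk}, it stands on its own.
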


\begin{proof}
(a) is \cite[Theorem~4.4]{dkk} whereas (b) is proved in \cite[Proposition~4.9]{dkk}.
\end{proof}

We end this section by introducing the evolutionary semigroup $\mathbb{G}$ related to Brownian motion. More examples of evolutionary semigroups might be found in \cite[Section 6]{dkk}.

\begin{example}
    \label{ex.wieneroperator}
    We let $B =(B_t)_{t\geq 0}$ be a $d$-dimensional Brownian motion defined on a probability space $(\Omega, \Sigma, P)$.
    In this example, we have $X= \R^d$ and $\cX= C(\R; \R^d)$. Given $\x\in \cX$ and $\omega\in \Omega$, we define
    \[
    [\x\otimes_0B(\omega)](t) \coloneqq \begin{cases}
        \x(t), & t\leq 0\\
        \x(0) + B_t(\omega), & t>0. 
    \end{cases}
    \]
We define the operator $\mathbb{W} \in \cL(B_b(\cX))$ by setting, for $F \in B_b(\cX)$,
\begin{equation}
    \label{eq.wienerop}
    [\mathbb{W} F](\x) \coloneqq E\bigl[F(\x \otimes_0 B)\bigr],
\end{equation}
where $E$ denotes expectation with respect to the probability measure $P$. Naturally, $\mathbb{W}$ is intimately related to the Wiener measure $W$ 
on $C([0,\infty); \R^d)$. It can be proved that $\mathbb{W}$ is a homogeneous expectation operator, see \cite[Theorem 6.10]{dkk} and the subsequent Example 6.11 therein. We denote the associated evolutionary semigroup
    by $\mathbb{G}$, i.e., $\mathbb{G}(t) = \mathbb{W}\Th_t$. 
\end{example}

\section{Final value problems}\label{sec:FVP}
In this section, we discuss the connection between transition semigroups and time-dependent partial differential equations involving their full generators. 
In the context of strongly continuous semigroups, these connections are well known, see \cite[Section II.6]{EngelNagel2000} and \cite[Section 3.1]{abhn}.
We adopt the relevant notions to our more general setting. However, motivated by our intended applications to martingales in the next section, 
we consider final value problems with a time horizon $T > 0$, rather than initial value problems. 

\subsection{The measurable case} 
Throughout, we fix a final time horizon $T > 0$ and let $S$ be a transition semigroup on $B_b(X)$ with full generator $\af$. 
As before, $(X,d)$ is a Polish space. We denote by $\mathfrak{L}^1(0,T; B_b(X))$ the space of all measurable functions $\varphi : [0,T) \times X \to \R$ such that $\int_0^T \|\varphi(t)\|_\infty \, dt < \infty$.  
The main difference between $\mathfrak{L}^1(0,T;B_b(X))$ and the Bochner space $L^1(0,T;B_b(X))$ is the measurability assumption on its members.
For example, if $T=1$, $X= \R$ and $\varphi(t,x) = \one_{(0,t)}(x)$, then $\varphi \in \mathfrak{L}^1(0,1; B_b(\R))$, but
$\varphi \not\in L^1(0,T; B_b(\R))$, since the map $t\mapsto \varphi (t)$ is not strongly measurable as $\varphi$ is not almost separably valued.

We also define
\[
\mathfrak{L}^1(0,T; C_b(X)) \coloneqq \left\{ \varphi \in \mathfrak{L}^1(0,T; B_b(X)) : \varphi(t) \in C_b(X) \text{ for all } t \in (0,T) \right\}.
\]

We now introduce the central object of this section. Given $f\in B_b(X)$ and $\varphi \in \mathfrak{L}^1(0,T; B_b(X))$, we consider the \emph{final value problem} (FVP)
\begin{equation}
\label{eq.af.fvp}
\left\{
\begin{aligned}
\partial_t u(t) & \in -\af u(t) + \varphi(t)\\
u(T) & = f.
\end{aligned}
\right. 
\end{equation}

\begin{defn}
A \emph{mild solution} of the FVP \eqref{eq.af.fvp} is a function $u\in B_b([0,T]\times X)$ with $u(T) =f$ such that
for all $t\in [0,T)$, it holds $\int_t^Tu(s)\, ds \in D(\af)$ with 
\begin{equation}
\label{eq.mildsol}
u(t)-f + \int_t^T\varphi(s)\, ds \in \af \int_t^Tu(s)\, ds.
\end{equation}
\end{defn}

We first prove existence of a mild solution to the FVP \eqref{eq.af.fvp}. To that end, we use the following auxiliary result.

\begin{lem}\label{l.measurable}
Let $\varphi \in \mathfrak{L}^1(0,T; B_b(X))$.
\begin{enumerate}[\upshape (a)]
    \item The map $(t,x) \mapsto [S(t)\varphi(t)](x)$ is measurable.
    \item The map $\Lambda: (t,x) \mapsto \int_t^T [S(s-t)\varphi(s)](x)$ belongs to $B_b([0,T]\times X)$.
    \item If $\varphi \in \mathfrak{L}^1(0,T; C_b(X))$, then the map $\Lambda$ from {\upshape (b)} belongs to $C_b([0,T]\times X)$.
\end{enumerate}
\end{lem}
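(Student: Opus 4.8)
The plan is to prove the three claims in sequence, with part (a) serving as the measurability backbone for everything else.

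\medskip

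\emph{Part (a).} I want to show that $(t,x)\mapsto [S(t)\varphi(t)](x)$ is measurable. The function $\varphi$ is jointly measurable by assumption, and by Definition \ref{d.transition}(ii) the map $(s,x)\mapsto [S(s)f](x)$ is jointly measurable for each fixed $f\in B_b(X)$. The obstacle is that here the function being acted upon, namely $\varphi(t)$, depends on $t$, so I cannot directly quote joint measurability of a single orbit. The natural approach is a standard measure-theoretic approximation: first verify the claim for \emph{simple} integrands of the form $\varphi(t,x)=\sum_{i} \one_{A_i}(t)\, g_i(x)$ with $A_i\subset[0,T)$ Borel and $g_i\in B_b(X)$, where $[S(t)\varphi(t)](x)=\sum_i \one_{A_i}(t)[S(t)g_i](x)$ is manifestly measurable by condition (ii). Then I would approximate a general $\varphi\in\mathfrak{L}^1(0,T;B_b(X))$ pointwise and boundedly by such simple functions and use bp-continuity of each $S(t)$ (Lemma \ref{l.kernelop}) together with the fact that a pointwise limit of measurable functions is measurable. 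One must take care that the approximation is simultaneous in $(t,x)$; the cleanest route is to approximate $\varphi$ by functions simple in the $X$-variable but with measurable $t$-dependent coefficients, i.e.\ $\varphi_n(t,x)=\sum_i c_i^n(t)\,\one_{B_i^n}(x)$, and invoke a monotone-class or functional-monotone-class argument to pass from indicators $\one_{B}(x)$ to general measurable integrands. I expect this to be the technical heart of the lemma.

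\medskip

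\emph{Part (b).} Given (a), the integrand $(s,x)\mapsto [S(s-t)\varphi(s)](x)$ is measurable jointly in $(t,s,x)$ (applying (a) to the shifted family or re-running the same argument with the parameter $s-t$), so by Fubini–Tonelli the partial integral $\Lambda(t,x)=\int_t^T [S(s-t)\varphi(s)](x)\,ds$ is measurable in $(t,x)$. Boundedness follows from the contraction-type estimate $\|S(s-t)\varphi(s)\|_\infty\le\|\varphi(s)\|_\infty$ (since $S$ consists of Markovian kernel operators, $\|S(r)\|\le 1$), giving
\[
|\Lambda(t,x)|\le \int_t^T \|\varphi(s)\|_\infty\,ds \le \int_0^T \|\varphi(s)\|_\infty\,ds<\infty,
\]
which is finite by the defining property of $\mathfrak{L}^1(0,T;B_b(X))$. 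Hence $\Lambda\in B_b([0,T]\times X)$.

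\medskip

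\emph{Part (c).} Now assume $\varphi\in\mathfrak{L}^1(0,T;C_b(X))$. Since $S$ is implicitly a $C_b$-semigroup here (otherwise $S(r)$ need not preserve $C_b(X)$), each $S(s-t)\varphi(s)\in C_b(X)$, and the map $(t,s,x)\mapsto [S(s-t)\varphi(s)](x)$ is jointly continuous away from the diagonal issues by continuity of the orbit together with continuity of $\varphi(s)$. The plan is to prove continuity of $\Lambda$ by dominated convergence: for $(t_n,x_n)\to(t,x)$, I would split the difference $\Lambda(t_n,x_n)-\Lambda(t,x)$ into a term controlling the change in the lower integration limit (bounded by $|t_n-t|\sup_s\|\varphi(s)\|_\infty$, which vanishes) and a term $\int_t^T\big([S(s-t_n)\varphi(s)](x_n)-[S(s-t)\varphi(s)](x)\big)\,ds$. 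For the latter, joint continuity of $(r,x)\mapsto[S(r)g](x)$ for fixed $g\in C_b(X)$ gives pointwise convergence of the integrand for a.e.\ $s$, and the uniform bound $\|\varphi(s)\|_\infty$ (integrable over $[0,T]$) lets me apply dominated convergence to conclude $\Lambda(t_n,x_n)\to\Lambda(t,x)$. The main subtlety will be handling the continuity of $(t,x)\mapsto[S(s-t)\varphi(s)](x)$ uniformly enough to invoke dominated convergence, but since the dominating function $\|\varphi(\cdot)\|_\infty$ is integrable and independent of $(t,x)$, this goes through without difficulty.
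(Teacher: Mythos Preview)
Your argument for (a) via the monotone class theorem is exactly the paper's approach. For (c), your dominated-convergence route is also essentially what the paper does, though the paper separates variables (uniform continuity in $t$ first, then continuity in $x$ for fixed $t$). One slip in your (c): the bound $|t_n-t|\sup_s\|\varphi(s)\|_\infty$ is not available, since $\varphi$ is only in $\mathfrak{L}^1$ and need not be uniformly bounded in $s$; use $\int_{t\wedge t_n}^{t\vee t_n}\|\varphi(s)\|_\infty\,ds\to 0$ instead. You are right to flag the implicit Feller hypothesis---without $S(r)C_b(X)\subset C_b(X)$ there is no reason for $x\mapsto[S(s-t)\varphi(s)](x)$ to be continuous, and the paper's proof of (c) quietly relies on this as well.

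The genuine difference is in (b). You go directly: extend (a) to joint measurability of $(r,s,x)\mapsto[S(r)\varphi(s)](x)$ by the same monotone-class argument, compose with the continuous map $(t,s,x)\mapsto(s-t,s,x)$, and integrate out $s$ via Fubini. The paper instead proves (c) \emph{before} (b) and then runs a second monotone-class argument for (b), taking as base class the functions $\varphi\in C_b([0,T]\times X)$ (for which $\Lambda_\varphi$ is continuous by (c), hence measurable) and pushing to all of $B_b([0,T]\times X)$. Your route is more direct and avoids the detour through the continuous case; the paper's route trades that for not having to revisit the three-variable measurability, since (c) hands it a ready-made base of continuous $\Lambda_\varphi$'s.
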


\begin{proof}
(a). Let   
\[
\mathcal{H} := \left\{ \varphi \in B_b([0,T] \times X) : (t,x) \mapsto [S(t)\varphi(t)](x)\mbox{ belongs to } B_b([0,T] \times X) \right\}.
\]  
Then $1_{[t_1, t_2] \times A} \in \mathcal{H}$ for all $0 \leq t_1 \leq t_2 \leq T$ and $A \in \cB(X)$. Moreover, 
$\mathcal{H}$ is closed under addition, scalar multiplication, and pointwise convergence of bounded increasing sequences, as all operators $S(t)$
are bp-continous. By the monotone class theorem, $\mathcal{H} = B_b([0,T] \times X)$. This proves (a) for uniformly bounded $\varphi$. By approximation, we can extend this result to $\varphi \in \mathfrak{L}^1(0,T; B_b(X))$.\smallskip

(c). For $0\leq s < t\leq T$ it is $\|\Lambda (t) - \Lambda (s)\|_\infty \leq \int_s^t\|\varphi(r)\|_\infty \, dr \to 0$ as $t-s\to 0$. Thus, to prove
continuity of $\Lambda$, we only need to prove continuity of $x\mapsto \Lambda (t, x)$ for every fixed $t$. However, as $x \mapsto \varphi(s, x)$
is continuous for every $s$ by assumption, this follows easily by dominated convergence.\smallskip

(b). It follows from (c), that for $\varphi \in C_b([0,T]\times X)$ the function $\Lambda_\varphi$ that maps 
$(t,x)$ to $\int_t^T [S(s-t)\varphi(s)](x)\, ds$ belongs to $C_b([0,T]\times X)$. Thus, the set
\[
\mathcal{H}\coloneqq \big\{\varphi \in B_b([0,T] \times X) : \Lambda_\varphi \in B_b([0,T] \times X) \big\}
\]
contains $C_b([0,T]\times X)$. Similar as in (a), the monotone class theorem yields $\mathcal{H}=B_b([0,T] \times X)$. Once again, we can extend
this result to arbitrary $\varphi \in \mathfrak{L}^1(0,T; B_b(X))$ by approximation.
\end{proof}

The variation of constants formula provides a mild solution:
\begin{prop}\label{p.existence.af.fvp} 
Given $f\in B_b(X)$ and $\varphi\in \mathfrak{L}^1(0,T; B_b(X))$, define
\[
u(t) \coloneqq S(T-t)f - \int_t^TS(r-t)\varphi(r)\, dr
\]
for $t\in [0,T]$. Then, $u$ is a mild solution of the FVP \eqref{eq.af.fvp}.
\end{prop}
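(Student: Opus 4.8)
The plan is to verify the mild-solution definition directly for the proposed function $u$. I need to show two things: first, that $u(T)=f$, which is immediate since the integral $\int_T^T$ vanishes and $S(0)=\mathrm{id}$; second, that for each $t\in[0,T)$ the time-integral $\int_t^T u(s)\,ds$ lies in $D(\af)$ and satisfies the inclusion \eqref{eq.mildsol}. The key structural tool is Lemma \ref{l.fullgen}(b), which tells me that integrals of semigroup orbits land in $D(\af)$ and identifies the corresponding element of $\af$ applied to them. So the strategy is to massage $\int_t^T u(s)\,ds$ into a form where Lemma \ref{l.fullgen}(b) and (c) apply termwise.

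First I would substitute the definition of $u$ and split the integral into the homogeneous part $\int_t^T S(T-s)f\,ds$ and the inhomogeneous part $-\int_t^T\!\int_s^T S(r-s)\varphi(r)\,dr\,ds$. For the homogeneous part, after the change of variables $\sigma=T-s$ this becomes $\int_0^{T-t} S(\sigma)f\,d\sigma$, which by Lemma \ref{l.fullgen}(b) lies in $D(\af)$ with $S(T-t)f-f\in\af\int_0^{T-t}S(\sigma)f\,d\sigma$; note $S(T-t)f$ is exactly the homogeneous piece of $u(t)$. For the inhomogeneous double integral, I would apply Fubini (justified pointwise in $x$ via the kernel representation and Lemma \ref{l.measurable}, together with the $\mathfrak{L}^1$ integrability of $\varphi$) to exchange the order of integration, rewriting $\int_t^T\!\int_s^T S(r-s)\varphi(r)\,dr\,ds = \int_t^T\!\int_t^r S(r-s)\varphi(r)\,ds\,dr$, and then substitute $\rho=r-s$ inside to expose each inner integral as $\int_0^{r-t} S(\rho)\varphi(r)\,d\rho$, which again sits in $D(\af)$ by Lemma \ref{l.fullgen}(b).

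Once both pieces are expressed through Lemma \ref{l.fullgen}(b), I would collect terms and compute what $\af$ yields. The homogeneous piece contributes $S(T-t)f - f$. The inhomogeneous piece, after applying $\af$ termwise (using bp-closedness from Lemma \ref{l.fullgen}(d) to pass $\af$ through the outer $r$-integral) contributes $\int_t^T\bigl(S(r-t)\varphi(r)-\varphi(r)\bigr)\,dr = \int_t^T S(r-t)\varphi(r)\,dr - \int_t^T\varphi(r)\,dr$. Adding the homogeneous and (negated) inhomogeneous contributions, the $S(T-t)f$ and $-\int_t^T S(r-t)\varphi(r)\,dr$ terms reassemble into $u(t)$, while the leftover $-f+\int_t^T\varphi(r)\,dr$ produces exactly the left-hand side $u(t)-f+\int_t^T\varphi(s)\,ds$ of \eqref{eq.mildsol}, as required.

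The main obstacle I anticipate is the rigorous justification of interchanging $\af$ with the outer integral in $r$ for the inhomogeneous term. Since $\af$ is multivalued and only bp-closed (not closed in norm), I cannot naively invoke a closed-operator/integration theorem; instead I would approximate the $r$-integral by Riemann sums, apply linearity of $\af$ and Lemma \ref{l.fullgen}(b)--(c) to each summand, and then pass to the bp-limit using Lemma \ref{l.fullgen}(d) to conclude the inclusion survives. A cleaner alternative, which I would prefer if it goes through, is to avoid differentiating $\af$ through the integral altogether by working entirely at the level of the defining integral identity \eqref{eq.fullgen}: one verifies \eqref{eq.mildsol} by checking that applying the semigroup and integrating reproduces the claimed relation, reducing everything to Fubini and the two clean applications of Lemma \ref{l.fullgen}(b). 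The bookkeeping in the Fubini step and the change of variables is routine but must be done carefully to keep the limits of integration consistent.
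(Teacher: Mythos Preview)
Your proposal is correct and follows essentially the same route as the paper: split $\int_t^T u(s)\,ds$ into the homogeneous and inhomogeneous pieces, apply Fubini to the double integral, and invoke Lemma~\ref{l.fullgen}(b) on each piece. For the step you flag as the main obstacle---passing the $\af$-inclusion through the outer $r$-integral---the paper takes exactly your ``cleaner alternative'': it verifies the inclusion directly via the defining identity~\eqref{eq.fullgen} and Fubini, rather than going through Riemann sums and bp-closedness.
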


\begin{proof}
By Lemma \ref{l.measurable}, the mapping $u$ is well-defined and belongs to $B_b([0,T]\times X)$. Using Fubini's theorem,
\begin{align*}
\int_t^Tu(s)\, ds & = \int_t^TS(T-s)f\, ds - \int_t^T\int_s^T S(r-s)\varphi(r)\, dr\, ds\\
& = \int_t^TS(T-s)f\, ds - \int_t^T\int_t^r S(r-s)\varphi(r)\, ds\, dr.\\
\end{align*}
By Lemma \ref{l.fullgen}(b), $\int_t^TS(T-s)f\, ds \in D(\af)$ and 
\[
S(T-t)f-f\in \af \int_t^TS(T-s)f\, ds
\] for all $t\in [0,T)$.
Similarly, $\int_t^r S(r-s)\varphi(r)\, ds\in D(\af)$ with 
\[
S(r-t)\varphi(r) - \varphi(r) \in \af\int_t^rS(r-s)\varphi(r)\, ds
\] 
for all $t< r \leq T$.
Integrating with respect to $r$, we obtain $
\int_t^T\int_t^rS(r-s)\varphi(r)\, ds\, dr \in D(\af) $ with
\[
\int_t^T S(r-t)\varphi(r)\, dr - \int_t^T\varphi(r)\, dr
\in \af \int_t^T\int_t^rS(r-s)\varphi(r)\, ds\, dr .
\]
Indeed, this follows from the characterization of the full generator in Equation \eqref{eq.fullgen} and Fubini's theorem.

Since $\af$ is linear, it follows that $\int_t^T u(s)\, ds\in D(\af)$ for $t\in [0,T)$ with 
\[
u(t)-f + \int_t^T\varphi(s)\, ds \in \af \int_t^Tu(s)\, ds.
\]
This proves that $u$ is a mild solution of \eqref{eq.af.fvp}.
\end{proof}

A natural question is whether mild solutions are unique and in what sense.
\begin{example}
Consider the FVP \eqref{eq.af.fvp} with $\af = \frac{1}{2}\Delta_{\mathsf{full}}$, the full generator of the Gaussian semigroup $G$
from Example \ref{ex.heat} in dimension $d=1$. We also set
$\varphi= 0$ and $f=0$. Then, $u(t,x) \coloneqq  \one_{\{T-t\}}(x)$ for $t\in [0,T]$ and $x\in \R$ defines
a mild solution of \eqref{eq.af.fvp} different from zero (which is also a mild solution of \eqref{eq.af.fvp}). We note that
for every $x\in X$, it is $u(t,x) = 0$ for almost every $t\in [0,T]$. Moreover, it is
$u(t) \in \Delta_{\mathsf{full}}(0)$ for all $t\in [0,T]$.
\end{example}

This example motivates the following definition.

\begin{defn}
We say that two functions $u_1, u_2 \in B_b([0,T]\times X)$
are \emph{nearly equal} with respect to $\af$ if the following properties hold:
\begin{enumerate}
[\upshape (i)]
\item For every $x\in X$, it holds $u_1(t,x) = u_2(t,x)$ for almost every $t\in [0,T]$.
\item For every $t\in [0,T]$, it holds $u_1(t)-u_2(t)\in \af (0)$.
\end{enumerate}
If it is clear from the context which full generator we are considering, we simply say that $u_1$ is nearly equal to $u_2$.
\end{defn}

We next show that two mild solutions of \eqref{eq.af.fvp} are nearly equal. In the proof, we will use the following
lemma.
\begin{lem}\label{l.defect}
If $f\in D(\af)\cap \af (0)$, then $f=0$.
\end{lem}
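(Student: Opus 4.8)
The plan is to reduce everything to the resolvent, exploiting the characterization of the full generator given in Lemma \ref{l.fullgen}(a) together with the uniform decay estimate $\|R(\lambda)\| = \lambda^{-1}$ recorded after \eqref{eq.laplacetransform}. The guiding idea is that the two hypotheses $f\in\af(0)$ and $f\in D(\af)$ each translate into a resolvent identity, and these two identities can be combined so that the first one collapses the second into $f = -R(\lambda)g$, after which the norm bound finishes the argument.

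Concretely, I would first unravel $f\in\af(0)$. Applying the equivalence (i)$\Leftrightarrow$(iii) of Lemma \ref{l.fullgen}(a) to the admissible pair $0\in D(\af)$, $f\in\af 0$ yields $R(\lambda)(\lambda\cdot 0 - f) = 0$, i.e.
\[
R(\lambda)f = 0 \qquad \text{for all } \lambda > 0 ,
\]
which is just the observation $\af(0) = \ker R(\lambda)$ already noted in the text. Next I would use $f\in D(\af)$: choosing any $g\in\af f$ and applying the same equivalence to the pair $(f,g)$ gives $R(\lambda)(\lambda f - g) = f$ for every $\lambda > 0$. Substituting $R(\lambda)f = 0$ into this identity, the term $\lambda R(\lambda)f$ vanishes and one is left with $f = -R(\lambda)g$ for all $\lambda > 0$.

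The final step is the estimate: taking supremum norms and using $\|R(\lambda)\| = \lambda^{-1}$,
\[
\|f\|_\infty = \|R(\lambda)g\|_\infty \le \lambda^{-1}\|g\|_\infty \xrightarrow{\ \lambda\to\infty\ } 0 ,
\]
so $f = 0$. I do not expect a genuine obstacle here; the only conceptual point worth flagging is \emph{why} one should combine the two resolvent identities rather than argue from the integral definition \eqref{eq.fullgen} directly. In the strongly continuous setting the natural argument would be $\lambda R(\lambda) f \to f$ on $D(\af)$, but that approximation is unavailable for our merely measurable (bp-continuous) semigroups; the uniform decay $\|R(\lambda)\|=\lambda^{-1}$ is precisely the substitute that lets the single surviving term $-R(\lambda)g$ be sent to zero, and pinning down this mechanism is the heart of the proof.
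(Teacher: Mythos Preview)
Your proof is correct and follows essentially the same strategy as the paper: express $f$ as $R(\lambda)$ applied to a fixed element and let $\lambda\to\infty$ using $\|R(\lambda)\|=\lambda^{-1}$. The only cosmetic difference is that the paper writes $f=R(\lambda_0)g$ with $g\in(\lambda_0-\af)f$ and then invokes the resolvent identity \eqref{eq.resolventidentity} together with $f\in\ker R(\lambda)$ to obtain $f=R(\lambda)g$ for all $\lambda>0$, whereas you reach the analogous identity $f=-R(\lambda)g$ (with $g\in\af f$) directly from Lemma~\ref{l.fullgen}(a)(iii); the two routes are equivalent.
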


\begin{proof}
We denote the Laplace transform of $S$ by $(R(\lambda))_{\lambda>0}$ and fix $\lambda_0>0$.
As $f\in D(\af)$, we may write $f= R(\lambda_0)g$ for some $g\in B_b(X)$. By the resolvent identity \eqref{eq.resolventidentity},
for every $\lambda >0$,
\[
f= R(\lambda_0)g= R(\lambda)g + (\lambda-\lambda_0)R(\lambda)R(\lambda_0)g = R(\lambda)g,
\]
as $f=R(\lambda_0)g \in \ker R(\lambda) = \af (0)$. As $S$ is Markovian,  $\|R(\lambda)g\|_\infty=\lambda^{-1}\|g\|_\infty$ which tends to $0$ for $\lambda \to \infty$. This implies $f=0$ as claimed.
\end{proof}

\begin{prop}\label{p.uniqueness.af.fvp}
A function $u \in B_b([0,T] \times X)$ is a mild solution of \eqref{eq.af.fvp} with $f = 0$ and $\varphi = 0$ if and only if $u$ is nearly equal to $0$.
\end{prop}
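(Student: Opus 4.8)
The plan is to prove both implications, with essentially all the content in the ``only if'' direction. Throughout write $w(t) := \int_t^T u(s)\,ds$, and note that for each fixed $x$ the scalar map $t \mapsto w(t,x)$ is Lipschitz with $w(T,x)=0$, and that $w(t)\in D(\af)$ for every $t\in[0,T]$ (by the mild solution property for $t<T$, and trivially at $t=T$ since $w(T)=0$).

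For the ``if'' direction, suppose $u$ is nearly equal to $0$. By property (i), for every $x$ the integrand $u(\cdot,x)$ vanishes almost everywhere, so $w(t)=0\in D(\af)$ for all $t$. Then $\af\int_t^T u(s)\,ds=\af(0)$, and property (ii) gives $u(t)\in\af(0)$, so \eqref{eq.mildsol} holds for every $t\in[0,T)$; the terminal value lies in $\af(0)$, in accordance with the example preceding this proposition. Hence $u$ is a mild solution with $f=\varphi=0$.

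For the ``only if'' direction the key claim is that $w(t_0)=0$ for every $t_0\in[0,T]$. Granting this, property (i) follows from scalar Lebesgue differentiation in $t$ (for each fixed $x$, $\int_{t_0}^T u(s,x)\,ds\equiv 0$ forces $u(\cdot,x)=0$ a.e.), and property (ii) follows because the mild inclusion then reads $u(t)\in\af\int_t^T u(s)\,ds=\af(0)$ for $t\in[0,T)$, while $u(T)=f=0\in\af(0)$. To prove the claim I would invoke Lemma~\ref{l.defect}: since $w(t_0)\in D(\af)$, it suffices to show $w(t_0)\in\af(0)=\ker R(\lambda)$, i.e.\ $R(\lambda)w(t_0)=0$ for all $\lambda>0$; since $D(\af')=\operatorname{im}R(\lambda)'$, this in turn reduces to verifying $\langle w(t_0),\mu\rangle=0$ for every $\mu\in D(\af')$. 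Fixing such a $\mu$ with $\nu\in\af'\mu$, I consider the scalar function $\psi(t):=\langle w(t),S(t-t_0)'\mu\rangle$ on $[t_0,T]$. Using the mild identity $S(r)'\mu=\mu+\int_0^r S(\rho)'\nu\,d\rho$ for the adjoint full generator $\af'$ together with the Fubini interchange $\langle\int_t^T u(s)\,ds,\sigma\rangle=\int_t^T\langle u(s),\sigma\rangle\,ds$, one checks that $\psi$ is Lipschitz and computes, at almost every $t$,
\[
\psi'(t)=-\langle u(t),S(t-t_0)'\mu\rangle+\langle w(t),S(t-t_0)'\nu\rangle .
\]
Because $u(t)\in\af w(t)$ while $S(t-t_0)'\mu\in D(\af')$ with $S(t-t_0)'\nu\in\af'S(t-t_0)'\mu$ (Lemma~\ref{l.fullgen}(c) applied to $\af'$), the duality relation \eqref{eq.dualitygen} rewrites the first pairing as $\langle w(t),S(t-t_0)'\nu\rangle$, so $\psi'\equiv 0$ a.e.\ and $\psi$ is constant. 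Thus $\langle w(t_0),\mu\rangle=\psi(t_0)=\psi(T)=\langle w(T),S(T-t_0)'\mu\rangle=0$; taking $\mu=R(\lambda)'\eta$ gives $\langle R(\lambda)w(t_0),\eta\rangle=0$ for all $\eta\in\fM(X)$, hence $R(\lambda)w(t_0)=0$, as required.

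The main obstacle is precisely what forces the detour through duality. Because $t\mapsto u(t)$ need not be strongly ($\|\cdot\|_\infty$-)measurable (the distinction between $\mathfrak{L}^1$ and $L^1$ emphasized in the text), one cannot differentiate the Banach-space-valued maps $w(t)$ or $S(t-t_0)w(t)$ directly; moreover $u(t)$ lies in $\af w(t)$ without in general belonging to $D(\af)$, so the orbit $r\mapsto S(r)u(t)$ is not norm-continuous and a primal ``product rule'' computation of $\frac{d}{dt}S(t-t_0)w(t)$ breaks down at this regularity. Pairing against $\mu\in D(\af')$ repairs this, since the adjoint orbit $t\mapsto S(t-t_0)'\mu$ is well behaved and every quantity becomes a genuine scalar function of $t$ amenable to ordinary Lebesgue differentiation; the multivaluedness of $\af$ is then absorbed entirely by Lemma~\ref{l.defect}.
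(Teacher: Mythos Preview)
Your approach is essentially the paper's: both reduce to showing $w(t_0)=0$ by proving that the scalar function $\psi(t)=\langle w(t),S(t-t_0)'\mu\rangle$ is constant on $[t_0,T]$ for suitable $\mu$, then invoking $w(t_0)\in D(\af)\cap\af(0)$ and Lemma~\ref{l.defect}. The one point you gloss over is the justification of the product-rule formula for $\psi'$. With $\mu$ merely in $D(\af')$, the orbit $t\mapsto S(t-t_0)'\mu$ is Lipschitz in total variation but not $C^1$ (the weak derivative $S(t-t_0)'\nu$ is in general only measurable in $t$, not norm-continuous), and $t\mapsto w(t)$ is Lipschitz in $\|\cdot\|_\infty$ but not differentiable as a $B_b(X)$-valued map. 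So neither factor in the pairing admits a classical Banach-space derivative, and your displayed formula for $\psi'$ is not automatic; it can be pushed through by expanding $\psi$ via the mild identity for $\af'$ and a careful two-variable Lebesgue differentiation argument exploiting that $w(t)\in D(\af)$ makes $\rho\mapsto S(\rho)w(t)$ norm-continuous, but this computation is the actual content of the step and deserves to be spelled out.

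The paper handles exactly this issue differently: it first takes $\mu\in D((\af')^2)$, which upgrades the adjoint orbit $t\mapsto S(t-t_0)'\mu$ to a genuine $C^1$ curve in total variation (Step~2), so that a clean weak product rule for $\langle w(t),\rho(t)\rangle$ with $\rho\in C^1([0,T];\fM(X))$ applies (Step~1). Only after obtaining $\langle w(t_0),\mu\rangle=0$ for all $\mu\in D((\af')^2)$ does the paper pass to general $\mu\in D(\af')$ by the approximation $\mu_n=nR(n)'\mu$. Your direct route avoids this approximation, at the cost of a more delicate differentiation; the paper's detour through $D((\af')^2)$ buys a cleaner product rule.
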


\begin{proof}
If $u$ is nearly equal to zero, then $\int_t^Tu(s)\, ds =0$ by property (i). In particular, $\int_t^Tu(s)\, ds \in D(\af)$. Moreover,
by property (ii), it follows that \[u(t) \in \af (0) = \af \int_t^Tu(s)\, ds.\]
Hence, $u$ is a mild solution.
\medskip

To establish the converse, let $u$ be a mild solution of \eqref{eq.af.fvp} with $f = 0$ and $\varphi = 0$, and define $U(t) \coloneqq \int_t^T u(s) \, ds$. We proceed in several steps.\smallskip

\emph{Step 1}: We prove that if  $\rho: [0,T]\to \fM(X)$ is continuously differentiable 
with respect to the total variation norm, then $t\mapsto \langle U(t), \rho(t)\rangle$ is weakly differentiable with
\[\frac{d}{dt} \langle U(t), \rho(t)\rangle = \langle U(t), \rho'(t)\rangle-\langle u(t), \rho(t)\rangle.\]

Indeed, suppose first that $\rho(t) = \sum_{k=1}^n\psi_k(t)\mu_k$, where $\psi_1, \ldots, \psi_n\in C^\infty([0,T])$ and $\mu_1\ldots, \mu_n
\in \fM(X)$. Then, for $\chi\in C_c^\infty((0,T))$, we obtain
\begin{align*}
\int_0^T \langle U(s), \rho(s)\rangle\chi'(s)\, ds & = \sum_{k=1}^n \int_0^T \langle U(s), \mu_k\rangle \psi_k(s)\chi'(s)\, ds\\
& = -\sum_{k=1}^n \int_0^T\chi(s) \frac{d}{ds} \big[\psi_k(s)\langle U(s), \mu_k\rangle\big] \, ds\\
& = -\sum_{k=1}^n \int_0^T\chi(s)\big[\psi'_k(s)\langle U(s), \mu_k \rangle -\psi(s)\langle u(s), \mu_k\rangle\big]\,ds\\
& = - \int_0^T\chi(s)\big[\langle U(s), \rho'(s)\rangle - \langle u(s), \rho(s)\rangle\big]\,ds.
\end{align*}
As for the general case, we note that there is a sequence $(\rho_n)_{n\in\N}$ of such particular functions, such that $\rho_n \to \rho$ in  the Bochner space $L^1((0,T); \cM(X))$   and $\rho_n' \to \rho'$ in $L^1((0,T); \cM (X))$  (cf.~\cite[Theorem~46.2]{Treves67}), and the above equality extends by continuity.\smallskip

\emph{Step 2}: If $\mu \in D((\af')^2)$ with $\nu \in \af'(\mu) \cap D(\af')$, then the map $t \mapsto S(t)'\mu$ is differentiable with respect to the total variation norm with \[\frac{d}{dt}S(t)'\mu =  S(t)'\nu.\]

To see this, note that by the adjoint version of Lemma \ref{l.fullgen}(c), 
as $\nu \in D(\af')$, the map $t \mapsto S(t)'\nu$ is continuous in the total variation norm. This shows that  
\[
t \mapsto S(t)'\mu = \mu + \int_0^t S(s)'\nu \, ds
\]
is differentiable with respect to the total variation norm with claimed derivative.
\smallskip

\emph{Step 3}: We prove $R(\lambda)U(t) = 0$ for all $t\in [0,T)$ and $\lambda>0$.

To that end, fix $\mu\in D((\af')^2)$ with $\nu \in \af'(\mu)\cap D(\af')$ and $t\in [0,T)$. As a result of Step 1 and Step 2, for $s\in (t,T)$, we obtain
\begin{align*}
\frac{d}{ds}\langle S(s-t)U(s), \mu\rangle & = \frac{d}{ds}\langle U(s), S(s-t)'\mu\rangle\\
& = -\langle u(t), S(s-t)'\mu\rangle + \langle U(t), S(s-t)'\nu\rangle\\
& = -\langle u(t), S(s-t)'\mu\rangle  + \langle u(t), S(s-t)'\mu\rangle = 0,
\end{align*}
where $\langle U(t), S(s-t)'\nu\rangle=\langle u(t), S(s-t)'\mu\rangle$ follows from Equation \eqref{eq.dualitygen}, observing that
 $S(s-t)'\nu \in \af'S(s-t)'\mu$ and $u(t)\in \af U(t)$, as $u$ is a mild solution. 
This shows that
\[\langle U(t), \mu\rangle = \langle S(T-t)U(T), \mu\rangle =0\quad\mbox{for all }\mu\in D((\af')^2).\]

 Now let
$\mu\in D(\af')$ and  $\nu \in \af'\mu$. Define $\mu_n\coloneqq nR(n)'\mu \in D((\af')^2)$. Then, $\mu_n \to \mu$ with respect to the total variation norm, as
\[
\mu_n = R(n)'\nu + \mu \to \mu 
\]
since $R(n)'\nu \to 0$ for $n\to \infty$. We obtain that $\langle U(t), \mu\rangle = 0$ for all $\mu \in D(\af')$. Since
$D(\af') = \im R(\lambda)'$, it follows that $\langle R(\lambda)U(t), \nu\rangle = \langle U(t), R(\lambda)'\nu\rangle = 0$
for all $\nu \in \fM(X)$, proving that $R(\lambda) U(t) = 0$, as claimed.\smallskip

\emph{Step 4:} 
We finish the proof.  
By Step 3, it holds that $U(t) \in \af (0) = \ker R(\lambda)$ for all $t \in [0,T)$. However, since $u$ is a mild solution, $U(t) \in D(\af)$ for $t \in [0,T)$, and Lemma \ref{l.defect} yields $U(t) = 0$ for all $t \in [0,T)$.  
As the integral defining $U(t)$ is pointwise, the (scalar) Lebesgue differentiation theorem implies that for every $x \in X$, $u(t,x) = 0$ for almost every $t \in (0,T)$. Moreover, since $u$ is a mild solution, it follows that $u(t) \in \af U(t) = \af (0)$ for all $t \in [0,T)$. This shows that $u$ is nearly equal to 0.
\end{proof}

Combining Proposition \ref{p.existence.af.fvp} with Proposition \ref{p.uniqueness.af.fvp} yields the following result.

\begin{thm}
\label{t.mildsolution}
Given $f\in B_b(X)$ and $\varphi\in \mathfrak{L}^1(0,T; B_b(X))$, 
a function $u\in B_b([0,T]\times X)$ is a mild solution of \eqref{eq.af.fvp} if and only if $u$ is nearly equal to
\[
(t,x) \mapsto [S(T-t)f](x) -\int_t^T [S(r-t)\varphi(r)](x)\, dr.
\]
\end{thm}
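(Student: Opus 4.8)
The plan is to combine the two propositions proved earlier via the linearity of the full generator $\af$ and the affine structure of the solution set. First I would note that Proposition~\ref{p.existence.af.fvp} provides a \emph{specific} mild solution of the FVP~\eqref{eq.af.fvp}, namely the variation-of-constants function
\[
u_0(t,x) \coloneqq [S(T-t)f](x) - \int_t^T [S(r-t)\varphi(r)](x)\, dr,
\]
which by Lemma~\ref{l.measurable} indeed belongs to $B_b([0,T]\times X)$. Thus the statement is really a uniqueness-up-to-near-equality assertion about the \emph{difference} of any two mild solutions, and Proposition~\ref{p.uniqueness.af.fvp} is exactly the tool that characterizes the homogeneous solution set.

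The key observation I would make precise is that near equality interacts well with the notion of mild solution because $\af$ is linear. Concretely, suppose $u$ is an arbitrary mild solution of~\eqref{eq.af.fvp} and set $w \coloneqq u - u_0$. I would verify that $w$ is a mild solution of the homogeneous problem (with $f=0$ and $\varphi=0$): since both $\int_t^T u(s)\,ds$ and $\int_t^T u_0(s)\,ds$ lie in $D(\af)$, their difference $\int_t^T w(s)\,ds$ lies in $D(\af)$ by linearity, and subtracting the two defining inclusions~\eqref{eq.mildsol} gives $w(t) \in \af \int_t^T w(s)\,ds$. By Proposition~\ref{p.uniqueness.af.fvp}, this is equivalent to $w$ being nearly equal to $0$, i.e.\ $u$ nearly equal to $u_0$. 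For the converse direction, I would argue symmetrically: if $u$ is nearly equal to $u_0$, then $w = u - u_0$ is nearly equal to $0$, hence a homogeneous mild solution by Proposition~\ref{p.uniqueness.af.fvp}, and adding back the inhomogeneous mild solution $u_0$ (again using linearity of $\af$ and the additivity of the defining inclusion) shows that $u = u_0 + w$ solves~\eqref{eq.af.fvp}.

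The only technical point requiring a little care is that near equality is a relation defined through the multi-valued set $\af(0)$, so I must check that the subtraction of inclusions is legitimate: if $g_1 \in \af v_1$ and $g_2 \in \af v_2$ then $g_1 - g_2 \in \af(v_1 - v_2)$, which holds because $\af$, as the generator realized via~\eqref{eq.fullgen}, is a linear subspace of $B_b(X)\times B_b(X)$. I expect this bookkeeping with the multi-valued operator to be the main (though minor) obstacle; everything else is a direct translation between the two propositions. I would therefore present the proof as a short two-paragraph argument: one sentence invoking Proposition~\ref{p.existence.af.fvp} to reduce to the homogeneous case via $w = u - u_0$, and one sentence invoking Proposition~\ref{p.uniqueness.af.fvp} in both directions, with the linearity of $\af$ making the reduction reversible.
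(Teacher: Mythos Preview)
Your proposal is correct and matches the paper's approach exactly: the paper's proof is literally the one-line remark that the theorem follows by combining Proposition~\ref{p.existence.af.fvp} with Proposition~\ref{p.uniqueness.af.fvp}, and you have simply spelled out the linearity bookkeeping that makes this combination work. Your observation that $\af$ is a linear subspace of $B_b(X)\times B_b(X)$ (so that inclusions can be added and subtracted) is precisely the ingredient needed to pass between the inhomogeneous and homogeneous problems via $w=u-u_0$.
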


\subsection{The continuous case} We now consider the special case of $C_b$-semigroups. Throughout, we fix $T > 0$ and let $S$ be a $C_b$-semigroup with generator $A$. In this case, if $f \in C_b(X)$ and $\varphi \in \mathfrak{L}^1(0,T; C_b(X))$, we consider the FVP
\begin{equation}
\label{eq.cb.fvp}
\left\{
\begin{aligned}
\partial_t u(t) & = -A u(t) + \varphi(t)\\
u(T) & = f.
\end{aligned}
\right.
\end{equation}

\begin{defn}
A \emph{continuous mild solution} of \eqref{eq.cb.fvp}
is a function $u \in C_b([0,T] \times X)$ with $u(T) = f$, which satisfies
$\int_t^T u(s) \, ds \in D(A) $ and
\begin{equation}
\label{eq.mildsol2}
u(t)  =f+ A \int_t^T u(s) \, ds - \int_t^T \varphi(s) \, ds
\end{equation}
for all $t\in[0,T)$.
\end{defn}

For this solution concept, the following characterization holds.
\begin{thm}\label{t.cb.fvp}
Let $f\in C_b(X)$ and $\varphi\in \mathfrak{L}^1(0,T; C_b(X))$.
\begin{enumerate}
[\upshape (a)]
\item If $u$ is a mild soloution of \eqref{eq.af.fvp} which belongs to $C_b([0,T]\times X)$, then $u$ is a continuous mild solution of
\eqref{eq.cb.fvp}.
\item The FVP \eqref{eq.cb.fvp} has a unique continuous mild solution $u$, given by
\begin{equation}
\label{eq.continuous.mild}
u(t) = S(T-t)f - \int_t^T S(r-t)\varphi(r)\, dr.
\end{equation}
\item If $u$ is the continuous mild solution of \eqref{eq.cb.fvp}, then for every $x\in X$, the map
\[ t \mapsto u(t,x) + \Big[A\int_0^t u(s)\, ds\Big](x)
\] 
is weakly differentiable in $t$ with derivative $\varphi(t,x)$.
\item  If $t\mapsto \varphi(t,x)$ is continuous for all $x\in X$, 
then for every $x\in X$, the map from {\upshape (c)} is differentiable in the classical sense.
\end{enumerate}
\end{thm}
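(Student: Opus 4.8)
The plan is to prove the four statements of Theorem \ref{t.cb.fvp} largely by transferring the results already established for the full generator $\af$ to its part $A$ in $C_b(X)$, using Lemma \ref{l.cbgen}(a), which identifies $A$ as that part. For part (a), I would start from the assumption that $u$ is a mild solution of \eqref{eq.af.fvp} lying in $C_b([0,T]\times X)$. By definition of mild solution, $\int_t^T u(s)\,ds \in D(\af)$ with $u(t)-f+\int_t^T\varphi(s)\,ds \in \af\int_t^T u(s)\,ds$. The key observation is that, since $u$, $f$, and each $\varphi(s)$ lie in $C_b(X)$, both $\int_t^T u(s)\,ds$ and the element $u(t)-f+\int_t^T\varphi(s)\,ds$ of its image under $\af$ are again continuous; Lemma \ref{l.cbgen}(a) then upgrades the inclusion into the single-valued identity $A\int_t^T u(s)\,ds = u(t)-f+\int_t^T\varphi(s)\,ds$, which rearranges to \eqref{eq.mildsol2}. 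One point requiring care is the joint continuity of $(t,x)\mapsto \int_t^T u(s,x)\,ds$, which follows from continuity of $u$ together with the Lipschitz-in-$t$ estimate analogous to the one used in Lemma \ref{l.measurable}(c).

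For part (b), existence is immediate: by Lemma \ref{l.measurable}(c) the function \eqref{eq.continuous.mild} lies in $C_b([0,T]\times X)$, and by Proposition \ref{p.existence.af.fvp} it is a mild solution of \eqref{eq.af.fvp}, so part (a) makes it a continuous mild solution of \eqref{eq.cb.fvp}. For uniqueness, I would take the difference $w$ of two continuous mild solutions; then $w\in C_b([0,T]\times X)$ is a continuous mild solution with $f=0$, $\varphi=0$, hence in particular a mild solution of \eqref{eq.af.fvp} with zero data. By Proposition \ref{p.uniqueness.af.fvp}, $w$ is nearly equal to $0$, meaning $w(t,x)=0$ for a.e.\ $t$ (every $x$) and $w(t)\in\af(0)$ for all $t$. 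Continuity of $w$ in $t$ promotes the almost-everywhere vanishing to vanishing for every $t$, giving $w\equiv 0$. (Alternatively one invokes the remark in the text that continuous mild solutions are unique.)

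For part (c), I would rewrite \eqref{eq.mildsol2} using the identity $A\int_0^t u(s)\,ds = A\int_0^T u(s)\,ds - A\int_t^T u(s)\,ds$, valid once one checks $\int_0^T u(s)\,ds\in D(A)$, so that the displayed map becomes $f + A\int_0^T u(s)\,ds - \int_t^T\varphi(s)\,ds$. Since only the last term depends on $t$ and equals a constant minus $\int_t^T\varphi(s)\,ds = \int_0^T\varphi(s)\,ds - \int_0^t\varphi(s)\,ds$, the weak derivative in $t$ is $\varphi(t,x)$ by the Lebesgue differentiation theorem applied pointwise in $x$ to the scalar $\mathfrak{L}^1$ function $t\mapsto\varphi(t,x)$. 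Part (d) is then the observation that when $t\mapsto\varphi(t,x)$ is genuinely continuous, $t\mapsto\int_0^t\varphi(s,x)\,ds$ is classically differentiable with derivative $\varphi(t,x)$ by the fundamental theorem of calculus, so the weak derivative from (c) is the classical one.

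The main obstacle I anticipate is the bookkeeping in part (c): one must be careful that the membership $\int_t^T u(s)\,ds\in D(A)$ from the solution definition, combined with Lemma \ref{l.cbgen}(c) (which gives $\int_0^t u(s)\,ds \in D(A)$), consistently yields $\int_0^T u(s)\,ds\in D(A)$ and the additive splitting of $A$ across the two integrals, so that the only $t$-dependence isolated is the elementary $\int_t^T\varphi(s)\,ds$ term. Everything else is a routine transfer from $\af$ to $A$ via the part-relation and the continuity upgrades, together with the scalar Lebesgue differentiation theorem.
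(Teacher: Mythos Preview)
Your proposal is correct and follows essentially the same route as the paper. Two minor remarks: in the uniqueness part of (b) the paper instead uses the second condition of ``nearly equal'' together with Lemma~\ref{l.cbgen}(a), observing that $w(t)\in\af(0)\cap C_b(X)$ forces $w(t)=A(0)=0$; your continuity-in-$t$ argument is an equally valid alternative. In (c), your invocation of Lemma~\ref{l.cbgen}(c) is misplaced---that lemma concerns semigroup orbits $\int_0^t S(s)f\,ds$, not $\int_0^t u(s)\,ds$---but it is also unnecessary: the definition of continuous mild solution already gives $\int_0^T u(s)\,ds\in D(A)$ (take $t=0$) and $\int_t^T u(s)\,ds\in D(A)$, so $\int_0^t u(s)\,ds\in D(A)$ and the additive splitting follow by linearity of $A$.
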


\begin{proof}
(a). If $u\in C_b([0,T]\times X)$, then $\int_t^Tu(s)\, ds \in C_b(X)$ for all $t\in [0,T)$. Moreover, as $\varphi \in \mathfrak{L}^1(0,T; C_b(X))$, we also see that $\int_t^T\varphi(s)\, ds \in C_b(X)$ for all $t\in [0,T)$. Together with the assumption $u(t), f\in C_b(X)$,
it follows that the left-hand side in the inclusion \eqref{eq.mildsol} belongs to $C_b(X)$. It follows from Lemma \ref{l.cbgen}(a),
that the validity of \eqref{eq.mildsol}  implies that of \eqref{eq.mildsol2}. This proves (a).\smallskip

(b). Define $u$ by \eqref{eq.continuous.mild}. By Proposition \ref{p.existence.af.fvp}, $u$ is a mild solution of \eqref{eq.af.fvp}.
Since $\varphi \in \mathfrak{L}^1(0,T; C_b(X))$, Lemma \ref{l.measurable}(c) implies that $u \in C_b([0,T]\times X)$. Now
(a) implies that $u$ is a continuous mild solution of \eqref{eq.cb.fvp}.
\smallskip

As for uniqueness, we note that if $u_1$ and $u_2$ are continuous mild solutions of \eqref{eq.cb.fvp}, then they are mild solutions of \eqref{eq.af.fvp}. Hence, Theorem \ref{t.mildsolution} implies that $u_1$ is nearly equal to $u_2$. 
In particular, $u_1(t) - u_2(t) \in \af (0)$ for all $t \in [0,T)$. However, since $u_1(t) - u_2(t) \in C_b(X)$, Lemma \ref{l.cbgen}(a) implies that $u_1(t) - u_2(t) = A(0) = 0$.
\smallskip

(c). This is immediate from the observation that for every $x\in X$,
\[
\Big[u(t)-u(0) + A\int_0^tu(r)\, dr\Big](x) = \int_0^t\varphi(r, x)\, dr 
\]
and the latter belongs to the Sobolev space $W^{1,1}(0,T)$.

(d). This follows from the well-known fact that in dimension one, a weakly differentiable function with continuous derivative is differentiable in the classical sense.
\end{proof}

We next discuss strong solutions of the FVP \eqref{eq.cb.fvp}.  
We denote by  $C_{b,\mathrm{loc}}([0,T) \times X)$ the space of functions $[0,T) \times X \to \R$ that are bounded and continuous on every $[0,T_0] \times X$ with $T_0 \in (0,T)$.

\begin{defn}
A \emph{strong solution} of the FVP \eqref{eq.cb.fvp} is a function $u \in C_b([0,T]\times X)$ such that 
    \begin{enumerate}[(i)]
    \item $u(t) \in D(A)$ for all $t \in [0,T)$, 
    \item the pointwise classical derivative $\partial_t u$ exists on the interval $[0,T)$ 
    and belongs to $C_{b,\mathrm{loc}}([0,T) \times X)$,
    \item $\partial_t u(t)=-A u(t)+\varphi(t)$ for all $t \in [0,T)$ and $u(T)=f$.
    \end{enumerate}
\end{defn}

The next result explains the relationship between continuous mild and strong solutions. 

\begin{thm}
    \label{t.mildstrong}
    Let $\varphi \in C_{b, \mathrm{loc}}([0,T)\times X)\cap \mathfrak{L}^1(0,T; C_b(X))$.
    \begin{enumerate}
    [\upshape (a)]
        \item Every strong solution of \eqref{eq.cb.fvp} is also a continuous mild solution of \eqref{eq.cb.fvp}.
        \item A continuous mild solution $u$ of \eqref{eq.cb.fvp} is a strong solution if and only if the pointwise classical
        derivative $\partial_tu$ exists on $[0,T)$ and belongs to $C_{b,\mathrm{loc}}([0,T)\times X)$.
        \item If $f\in D(A)$ and $\varphi$ has a pointwise classical derivative $\partial_t\varphi$ on $[0,T)$ which belongs
        to $\mathfrak{L}^1(0,T; C_b(X))$, then \eqref{eq.cb.fvp} has a strong solution.
    \end{enumerate}
\end{thm}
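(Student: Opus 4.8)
\textbf{Proof plan for Theorem \ref{t.mildstrong}.}

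The plan is to treat the three parts in order, since (a) is essentially a regularity bookkeeping exercise, (b) packages the content, and (c) is where the real analytic work lies. For part (a), I would start from a strong solution $u$ and show it satisfies the integral identity \eqref{eq.mildsol2}. The idea is to integrate the pointwise differential equation $\partial_t u(t) = -Au(t) + \varphi(t)$ in time from $t$ to $T$. Fixing $x \in X$, the scalar map $s \mapsto u(s,x)$ is $C^1$ on $[0,T)$ with derivative $[-Au(s) + \varphi(s)](x)$, so the fundamental theorem of calculus gives $f(x) - u(t,x) = \int_t^T [-Au(s)+\varphi(s)](x)\, ds$. The point requiring care is to pull the operator $A$ out of the time integral, i.e.\ to justify $\int_t^T Au(s)\,ds = A\int_t^T u(s)\,ds$. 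This is exactly the kind of commutation that follows from the closedness of the $C_b$-generator together with joint continuity of $(s,x)\mapsto [Au(s)](x) = [\varphi(s) - \partial_t u(s)](x)$, which holds by hypothesis (ii) of strong solution and $\varphi \in C_{b,\mathrm{loc}}$. Once this is granted, rearranging yields \eqref{eq.mildsol2}, and $u \in C_b([0,T]\times X)$ is part of the definition of strong solution, so $u$ is a continuous mild solution.

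Part (b) is then almost immediate. One direction is part (a). For the converse, suppose $u$ is a continuous mild solution whose pointwise derivative $\partial_t u$ exists on $[0,T)$ and lies in $C_{b,\mathrm{loc}}([0,T)\times X)$; I must verify the three defining properties of a strong solution. Properties (ii) and the continuity of $u$ are assumed or inherited from the mild solution. The substantive claims are $u(t) \in D(A)$ for $t \in [0,T)$ and the pointwise identity $\partial_t u(t) = -Au(t) + \varphi(t)$. Here I would differentiate the mild-solution identity \eqref{eq.mildsol2} in $t$: by Theorem \ref{t.cb.fvp}(c)–(d), the map $t\mapsto u(t,x) + [A\int_0^t u(s)\,ds](x)$ is differentiable with derivative $\varphi(t,x)$ (under the continuity of $\varphi$ in $t$, which holds since $\varphi \in C_{b,\mathrm{loc}}$). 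Rewriting $\int_t^T u = \int_0^T u - \int_0^t u$ and using Lemma \ref{l.cbgen}(c), one expresses $A\int_t^T u(s)\,ds$ and deduces, from the existence of $\partial_t u$, that $Au(t)$ exists as a limit and equals $\varphi(t) - \partial_t u(t)$, forcing $u(t)\in D(A)$. I expect the main subtlety here to be showing $u(t)$ itself (not just a time integral) lies in $D(A)$; this should come from closedness of $A$ applied to difference quotients $h^{-1}(\int_t^{t+h}u - \text{shifted terms})$ whose limits are controlled by the assumed regularity.

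Part (c) is the genuine existence statement and I expect it to be the hardest. The natural candidate is the variation-of-constants solution \eqref{eq.continuous.mild}, $u(t) = S(T-t)f - \int_t^T S(r-t)\varphi(r)\,dr$, which Theorem \ref{t.cb.fvp}(b) already identifies as the continuous mild solution. By part (b) it suffices to show $\partial_t u$ exists pointwise on $[0,T)$ and belongs to $C_{b,\mathrm{loc}}$. I would differentiate the two terms separately. For the homogeneous term $S(T-t)f$ with $f \in D(A)$, Lemma \ref{l.cbgen} and Lemma \ref{l.fullgen}(c) give $S(T-t)f \in D(A)$ and $\partial_t S(T-t)f = -AS(T-t)f = -S(T-t)Af$, which is jointly continuous because $Af \in C_b(X)$ and $S$ is a $C_b$-semigroup. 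For the inhomogeneous term $w(t) := \int_t^T S(r-t)\varphi(r)\,dr$, the differentiation produces two contributions: the boundary term from the lower limit, giving $+\varphi(t)$ (using $S(0) = \mathrm{id}$ and continuity of $\varphi$), and an interior term from differentiating $S(r-t)$ in $t$, which formally yields $-\int_t^T \partial_t S(r-t)\varphi(r)\,dr = \int_t^T S(r-t)A\varphi(r)\,dr$. To make this rigorous and to control the interior term one uses the hypothesis that $\partial_t \varphi$ exists and lies in $\mathfrak{L}^1(0,T;C_b(X))$: an integration by parts in the $r$-variable (or equivalently rewriting $\partial_t S(r-t)\varphi(r) = -\partial_r[S(r-t)\varphi(r)] + S(r-t)\partial_r\varphi(r)$) trades the unavailable $A\varphi$ for the available $\partial_r\varphi$, after which Lemma \ref{l.measurable}(c) guarantees joint continuity of the resulting integral. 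Assembling the pieces gives a jointly continuous, locally bounded $\partial_t u$ on $[0,T)$, and part (b) upgrades $u$ to a strong solution. The main obstacle is precisely this differentiation of the convolution term: the semigroup $S$ need not be differentiable in the operator norm, so one cannot naively differentiate under the integral sign, and the integration-by-parts device transferring the derivative from the semigroup onto $\varphi$ is the key technical step that the regularity hypothesis on $\partial_t\varphi$ is designed to enable.
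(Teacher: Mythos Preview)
Your plan is correct and follows the paper's approach in all three parts: integration plus closedness of $A$ for (a), closedness applied to the difference quotients $h^{-1}\int_t^{t+h}u$ for (b), and the integration-by-parts device trading the semigroup derivative for $\partial_t\varphi$ in (c). The paper is slightly more careful in two places: in (a) it integrates first to $s<T$ and then passes to the limit $s\to T$ via closedness of $A$ (since $\partial_t u$ is only in $C_{b,\mathrm{loc}}$ and need not be integrable up to $T$), and in (c) it avoids any formal Leibniz differentiation under the integral by directly verifying, via Fubini and the fundamental theorem of calculus, that the convolution term $u_2(t)=\int_t^T S(s-t)\varphi(s)\,ds$ satisfies $u_2(t)=-\int_t^T v(s)\,ds$ for an explicit continuous $v$ built from $\partial_t\varphi$.
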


\begin{proof}
    (a). Let $u$ be a strong solution of \eqref{eq.cb.fvp} and fix $0\leq t <s < T$. Note that $\partial_ru$ is continuous on 
    $[t, s]$, hence pointwise integrable. It follows that
    \begin{align}
        u(t) & = u(s) - \int_t^s \partial_ru(r)\, dr\notag\\
        & = u(s) - \int_t^s Au(r)\, dr + \int_t^s \varphi (r)\, dr\notag\\
        & = u(s) - A\int_t^su(r)\, dr + \int_t^s \varphi(r)\, dr.\label{eq.tprime}
    \end{align}
    Here the last equality follows from the fact that $\int_t^su(r)\,dr \in D(A)$ and $A\int_t^s u(r)\, dr = \int_t^s Au(r)\, dr$. 
    Indeed, for $\mu \in D(\af')$, it is
    \begin{align*}
    \Big\langle \int_t^s u(r)\, dr , \af' \mu \Big\rangle & = \int_t^s \langle u(r), \af' \mu \rangle\, dr
    = \int_t^s \langle Au(r), \mu \rangle \, dr\\
    & = \Big\langle \int_t^s Au(r)\, dr , \mu \Big\rangle.
    \end{align*}
    As $\mu \in D(\af')$ was arbitrary, the claim follows. We note that, as $s \to T$, we obtain pointwise convergence of $u(s) \to u(T) = f$.
    Moreover, $\int_t^s\varphi(r)\,dr \to \int_t^T\varphi(r)\, dr$ and $\int_t^su(r)\, dr \to \int_t^T u(r)\, dr$ with respect to $\|\cdot\|_\infty$. Thus, upon
    $s\to T$ in \eqref{eq.tprime}, it follows from the closedness of $A$ that $\int_t^Tu(r)\, dr \in D(A)$ and
    \[
    u(t) = f - A\int_t^T u(r)\, dr + \int_t^T \varphi(r)\, dr.
    \]
    This proves that $u$ is a continuous mild solution of \eqref{eq.cb.fvp}.\smallskip

    (b). Let $u$ be a continuous mild solution with $\partial_t u\in  C_{b,\mathrm{loc}}([0,T) \times X)$. 
    It follows from  \eqref{eq.mildsol2} that for $t\in[0,T)$ and small enough $h$
    \[
    \frac{u(t+h)-u(t)}{h}=-A\frac{1}{h}\int_t^{t+h} u(s)\,ds+\frac{1}{h}\int_t^{t+h} \varphi(s)\,ds.
    \]
    Upon $h\to 0$, it is $h^{-1}(u(t+h)-u(t)) \to \partial_tu(t)$, $h^{-1}\int_t^{t+h}\varphi(s)\, ds \to \varphi(t)$ and
    $h^{-1}\int_t^{t+h}u(s)\, ds \to u(t)$, pointwise. By the closedness of $A$, it follows that $u(t)\in D(A)$ and 
    $Au(t)=-\partial_t A(t)+\varphi(t)$ for all $t\in[0,T)$. Differentiability from the left at $t\in (0,T)$ is proved similarly.
    Altogether, we proved that $u$ is a strong solution.\smallskip

    (c). As $f\in D(A)$, $u_1(t)\coloneqq S(T-t)f$ is a strong solution of \eqref{eq.cb.fvp} with $\varphi\equiv 0$ by Lemma \ref{l.cbgen}(b).
    We claim that the function $u_2$, defined by
    \[
    u_2(t) \coloneqq \int_t^T S(s-t)\varphi(s)\, ds
    \]
    is pointwise differentiable on $[0,T)$ with derivative
    \[
    v(t) \coloneqq \int_t^TS(s-t)\varphi'(s)\, ds - S(T-s)f.
    \]
    Once this is proved, it follows that $u\coloneqq u_1+u_2$, the unique continuous mild solution of \eqref{eq.cb.fvp}, is
    pointwise differentiable. Thus, the claim follows from part (b).

    First note that, by substitution, it is
    \[
    v(t) = \int_0^{T-t}S(r)\varphi'(r+t)\, dr - S(T-t)f.
    \]
    Consequently,
    \begin{align*}
        \int_t^T v(s)\, ds & = \int_t^T \int_0^{T-s} S(r)\varphi'(r+s)\, dr\, ds - \int_t^{T}S(T-s)f\, ds\\
        & = \int_0^{T-t}\int_t^{T-r} S(r)\varphi'(r+s)\, ds\, dr - \int_0^{T-t}S(s)f\, ds\\
        & = \int_0^{T-t}S(r)\big[f - \varphi(r+t)\big]\, dr - \int_0^{T-t}S(s)f\, ds\\
        & = - \int_t^T S(s-t)\varphi(s)\, ds = -u_2(t).
    \end{align*}
    Here, the second equality follows from substitution and Fubini's theorem, the third from the fundamental theorem of calculus
    and the last again from subsitution. We have thus proved that $u_2(t) = - \int_t^T v(s)\, ds$. But as $v$ is a continuous function
    by Lemma \ref{l.measurable}(c), it follows that $u_2$ has a pointwise classical derivative.
\end{proof}

\begin{rem}\label{rem:Cloc}
    If $\varphi$ has a pointwise classical derivative $\partial_t\varphi$ on $[0,T)$ that belongs to $\mathfrak{L}^1(0,T; C_b(X))$, i.e.\ in the setting Theorem \ref{t.mildstrong}(c), then $\varphi \in C_{b, \mathrm{loc}}([0,T)\times X)$. Indeed, writing $\varphi$ as an integral over 
    its derivative, this can be proved similar to Lemma \ref{l.measurable}(c).

    On the other hand, a general function $\varphi\in \mathfrak{L}^1(0,T; C_b(X))$ that is jointly continuous in $t$ and $x$ need
    not belong to $C_{b, \mathrm{loc}}([0,T)\times X)$. As an example, consider for $X= \R$ the function $\varphi: [0,T]\times \R \to \R$, defined by
    \[
    \varphi(t,x) =  \begin{cases}
        0, & t=0\\
        \frac{1}{\sqrt{t}}\big(1- |x-t^{-1}|\big)^+, & t>0.
    \end{cases}
    \]
\end{rem}

It is well known that in the realm of strongly continuous semigroup, the homogeneous initial value problem $u' = Au$ with $u(0)=x$
has a classical solution if and only if $x\in D(A)$, see, e.g., \cite[Proposition 3.1.9(h)]{abhn}. We end this section with an example that
shows that this result does not generalize to strong solutions.

\begin{example}
    We consider again the Gaussian semigroup $G$ from Example \ref{ex.heat}. Note that this is even a $C_b$-semigroup (again, see
    \cite[Example 3.7.8]{abhn}), we denote its $C_b$-generator by $\Delta_\mathrm{c}$. As $G$ is holomorphic, it follows
    that $G(t)f\in D(\Delta_{\mathrm{c}})$ for all $f\in C_b(\R^d)$ and $\partial_t G(t)f = \Delta_c G(t)f$ for $t>0$, 
    which is continuous on $(0,\infty)\times \R^d$.
    This implies that for every $f\in C_b(\R^d)$ the final value problem
    \[
    \begin{cases}
        \partial_t u(t) & = - \Delta_\mathrm{c} u(t)\\
        u(T) & = f
    \end{cases}
    \]
    has a strong solution.  
\end{example}

\section{Martingales and their relation to semigroups on path space}
\label{sect.martingale}

Throughout this section, let $\S$ denote an evolutionary semigroup with expectation operator $\E$ and full generator $\Af$. Sometimes, we additionally assume that $\S$ is a $C_b$-semigroup, in which case its generator is denoted by $\A$.
As before, we set $\E_t = \Theta_t \E \Theta_{-t}$. 
We denote the kernel of $\E$ by $\k$ and define $\P^\x \coloneqq \k(\x, \cdot)$ as the corresponding probability measures. We also write $\E^\x$ for the (conditional) expectation with respect to $\P^\x$.

We first provide some additional information about the operators
$\E_t$ which shows that they behave like conditions expectations.

\begin{lem}\label{l.expectation}
The following hold true.
\begin{enumerate}
[\upshape (a)]
\item For $F\in B_b(\cX)$ the function $\E_tF$ is $\cF_t$-measurable. If $F$ is $\cF_t$-measurable,
then $\E_t F=F$.
\item For every $G\in B_b(\cX)$ and $F\in B_b(\cX; \cF_t)$, we have $\E_t(FG) = F\E_t G$.
\item It is $\E_s\E_t = \E_s$ for all $0\leq s\leq t$.
\item For every $t\geq 0$, $\x\in\cX$ and $F\in B_b(\cX)$, we have
\[
 [\E_tF](\y) = \big(\mathbb{E}^\x[F\mid\cF_t]\big)(\y) \quad \mbox{for }\P^\x\mbox{-almost all } \y.
\]
\end{enumerate}
\end{lem}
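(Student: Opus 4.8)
The plan is to handle the four claims in order, reducing (b)--(d) to the behaviour of $\E$ at time $0$ conjugated by the shift. First I would record the two intertwining facts I use repeatedly: if $G\in B_b(\cX;\cF_0)$ then $\Theta_tG\in B_b(\cX;\cF_t)$, and if $F\in B_b(\cX;\cF_t)$ then $\Theta_{-t}F\in B_b(\cX;\cF_0)$. Both follow from $\pi_u\circ\vt_t=\pi_{u+t}$, which gives $\vt_t^{-1}(\cF_0)=\sigma(\pi_{u+t}:u\le 0)=\cF_t$ and symmetrically for $\Theta_{-t}$; compare \cite[Lemma~2.3]{dkk}.

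Part (a) is then immediate. Writing $\E_tF=\Theta_t(\E\Theta_{-t}F)$, property (i) of the expectation operator gives $\E\Theta_{-t}F\in B_b(\cX;\cF_0)$, so the first intertwining fact makes $\E_tF$ an $\cF_t$-measurable function. If in addition $F$ is $\cF_t$-measurable, then $\Theta_{-t}F\in B_b(\cX;\cF_0)$ by the second fact, property (ii) yields $\E\Theta_{-t}F=\Theta_{-t}F$, and hence $\E_tF=\Theta_t\Theta_{-t}F=F$.

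The conceptual core, and the step I expect to be the main obstacle, is a pull-out property of $\E$ at time $0$: $\E(HK)=H\,\E K$ for $H\in B_b(\cX;\cF_0)$ and $K\in B_b(\cX)$. I would derive it from the axioms by testing property (ii) on indicators $\one_A$ with $A\in\cF_0$, which gives $\P^\x(A)=\one_A(\x)$; hence $\tau_*\P^\x=\delta_{\tau(\x)}$ and $\P^\x$ is concentrated on the fibre $\{\y:\tau(\y)=\tau(\x)\}$. Since $H=H\circ\tau$ is constant equal to $H(\x)$ on that fibre, integrating $HK$ against $\P^\x=\k(\x,\cdot)$ yields $[\E(HK)](\x)=H(\x)[\E K](\x)$. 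Part (b) then follows by conjugation: using that $\Theta_{\pm t}$ are multiplicative, that $\Theta_t\Theta_{-t}=\mathrm{id}$, and that $\Theta_{-t}F\in B_b(\cX;\cF_0)$, I compute $\E_t(FG)=\Theta_t\E\big((\Theta_{-t}F)(\Theta_{-t}G)\big)=\Theta_t\big((\Theta_{-t}F)\,\E\Theta_{-t}G\big)=F\,\E_tG$.

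For (c) I would rewrite the homogeneity $\E\E_r=\E$ in the equivalent form $\E\Theta_r\E=\E\Theta_r$ for $r\ge 0$, obtained by multiplying $\E\Theta_r\E\Theta_{-r}=\E$ on the right by $\Theta_r$. Then, for $0\le s\le t$, the group law gives $\E_s\E_t=\Theta_s\E\Theta_{t-s}\E\Theta_{-t}=\Theta_s\E\Theta_{t-s}\Theta_{-t}=\Theta_s\E\Theta_{-s}=\E_s$, where the middle step uses $t-s\ge 0$. Finally, for (d) I observe that $\E^\x[G]=[\E G](\x)$ for every $G\in B_b(\cX)$ by definition of $\P^\x=\k(\x,\cdot)$, so it suffices to verify the two defining properties of $\E^\x[F\mid\cF_t]$. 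The $\cF_t$-measurability of $\E_tF$ holds by (a), while for every $H\in B_b(\cX;\cF_t)$ part (b) and homogeneity give $\E(H\,\E_tF)=\E(\E_t(HF))=(\E\E_t)(HF)=\E(HF)$; evaluating at $\x$ and taking $H=\one_A$ with $A\in\cF_t$ gives $\int_A\E_tF\,d\P^\x=\int_AF\,d\P^\x$. By the a.s.\ uniqueness of conditional expectation this identifies $\E_tF$ as a version of $\E^\x[F\mid\cF_t]$ for each fixed $\x$, which is the assertion.
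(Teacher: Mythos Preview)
Your proof is correct and follows the same overall route as the paper: reduce everything to the time-$0$ properties of $\E$ via conjugation by the shift. Parts (a) and (c) match the paper's argument exactly.

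For (b) and (d) the paper simply cites \cite[Proposition~4.1]{dkk} and \cite[Proposition~4.7]{dkk}, whereas you supply self-contained arguments. Your derivation of the pull-out property $\E(HK)=H\,\E K$ from the support description $\tau_*\P^\x=\delta_{\tau(\x)}$ is correct and is essentially what lies behind the cited proposition; likewise your verification of the two defining properties of conditional expectation in (d) is the natural direct proof. So the difference is only one of presentation: your version is more explicit and independent of \cite{dkk}, at the cost of a few extra lines, while the paper keeps the lemma short by outsourcing these two steps.
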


\begin{proof}
    (a). By definition of $\E_t$, it is obvious that $\E_t F$ is $\cF_t$-measurable. If $F$ is $\cF_t$-measurable, then $\tilde F \coloneqq \Th_{-t}F$ is $\cF_0$-measurable, whence $\E\tilde F = \tilde F$ by  Definition~\ref{def.expectation}(i). It follows $\E_tF = F$.\smallskip

    (b). If $F, G$ are $\cF_t$-measurable, then $\tilde F \coloneqq \Th_{-t}F$ and $\tilde G \coloneqq \Th_{-t}G$ are $\cF_0$-measurable.
    By \cite[Proposition 4.1]{dkk}, $\E (\tilde F \tilde G) = \tilde F \E \tilde G$. Applying $\Th_t$, (b) follows.\smallskip

    (c). Homogeneity is equivalent to $\E\Th_t\E= \E\Th_t$ for all $t\geq 0$,
    which  implies
    $\E_s\E_t = \Th_s\E\Th_{t-s}\E\Th_{-t} = \Th_s\E\Th_{t-s}\Th_{-t} = \E_s$ for  all $0\leq s\leq t$.\smallskip

    (d). This is \cite[Proposition 4.7]{dkk}.
\end{proof}

\begin{defn}
Let $I \subset \R$ be an interval. A function 
\[
V \colon I \to B_b(\cX)
\]
(which we frequently identify with the map $V \colon I \times \cX \to \R$) is said to be
\begin{enumerate}[\upshape (a)]
\item \emph{measurable}, if the map $V \colon I \times \cX \to \R$ is measurable,
\item \emph{adapted}, if the function $\x\mapsto V(t,\x)$ is $\cF_t$-measurable for all $t \in I$,
\item an \emph{$\E$-martingale}, if it is measurable, adapted, and satisfies the martingale property
\[
V(s) = \E_s V(t) \quad \text{for all } s, t \in I \text{ with } s \leq t.
\]
An $\E$-martingale $V$ is called \emph{continuous}, if the map $(t, \x) \mapsto V(t, \x)$ from $I \times \cX$ to $\R$ is continuous.
\end{enumerate}
\end{defn}
We remark that for an $\E$-martingale $V$ defined on a compact interval $I$, the martingale property implies that $V \in B_b(I \times \cX)$.

It follows from Lemma \ref{l.expectation}(d), that if $V$ is an $\E$-martingale, then $V$ is a martingale in the classical sense
with respect to every measure $\P^\x$. However, the converse is not true in general:

\begin{example}
Let $\mathbb{W}$ be the expectation operator associated to Brownian motion on $\R$ from Example \ref{ex.wieneroperator} and put $V(t) = \one_{(0,T)}(t)F_t(\one_{\{0\}})$.
Then, $V \in B_b([0,T] \times \cX)$ is adapted and $\mathbb{W} V(t) \equiv 0$, so that $V(t) = 0$ $\P^\x$-almost surely for all $t \in [0,T]$ and $\x \in \cX$.
Hence, $V$ is a martingale with respect to every $\P^\x$. However, for $0 < s < t < T$, it holds that $\mathbb{W}_s V(t) = 0 \neq V(s)$, whence $V$ is not an $\mathbb{W}$-martingale.
\end{example}

The situation improves for continuous martingales under an additional condition on the expectation operator $\E$. 

\begin{defn}\label{def:full-support}
An expectation operator $\E$ has \emph{full support} if for all $F\in C_b(\cX)$, it follows from $\E|F|=0$ that $F=0$.
\end{defn}

Note that $\E$ has full support if and only if for every $F\in C_b(\cX)$ with $F=0$ $\P^\x$-almost surely for every $\x\in \cX$, we already have $F=0$. 
In the context of stochastic differential equations, the support condition plays an important role. The theorem of Stroock--Varadhan \cite[Theorem 3.1]{Stroock-Varadhan72} gives a sufficient condition for full support, see also \cite{ck20} for a generalization to path dependent equations. See
Example \ref{Ex:Support-condition} below for an example of an expectation operator without full support.

\begin{lem}
    \label{l.pmartingale}
    Assume that $\S$ is a $C_b$-semigroup whose expectation operator has full support. Let $V: I \to C_b(\cX)$ be adapted. If $V$ is a $\P^\x$-martingale for all $\x \in \cX$, then $V$ is also an $\E$-martingale.
\end{lem}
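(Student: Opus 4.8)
The plan is to verify the $\E$-martingale identity $V(s) = \E_s V(t)$ for all $s \le t$ in $I$, since measurability and adaptedness of $V$ are already part of the hypotheses. Fixing such $s \le t$, I set $G \coloneqq \E_s V(t) - V(s)$ and aim to show that $G$ is continuous and vanishes $\P^\x$-almost surely for every $\x \in \cX$; the full support property will then force $G = 0$.

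First I would check that $G \in C_b(\cX)$. As $V(s) \in C_b(\cX)$ by assumption, only the continuity of $\E_s V(t)$ is at stake, and here one cannot argue directly since $\E$ need not map $C_b(\cX)$ into itself. Instead I exploit that $V$ is adapted: then $V(t)$ is $\cF_t$-measurable, so $W \coloneqq \Th_{-t} V(t)$ is $\cF_0$-measurable, and since $\Th_{-t}$ preserves continuity we have $W \in C_b(\cX; \cF_0)$. Writing $V(t) = \Th_t W$ and using $\E_s = \Th_s \E \Th_{-s}$ together with $t - s \ge 0$, I obtain
\[
\E_s V(t) = \Th_s \E \Th_{t-s} W = \Th_s\, \S(t-s) W.
\]
Because $\S$ is an evolutionary $C_b$-semigroup, $\S(t-s) W \in C_b(\cX; \cF_0)$, and applying the continuity-preserving shift $\Th_s$ gives $\E_s V(t) \in C_b(\cX)$. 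Hence $G \in C_b(\cX)$.

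Next I would show $G = 0$ $\P^\x$-almost surely for every $\x$. By Lemma~\ref{l.expectation}(d), $[\E_s V(t)](\y) = (\E^\x[V(t) \mid \cF_s])(\y)$ for $\P^\x$-almost all $\y$. On the other hand, since $V$ is a $\P^\x$-martingale and $V(s)$ is $\cF_s$-measurable, we have $V(s) = \E^\x[V(t) \mid \cF_s]$ $\P^\x$-almost surely. Comparing the two identities yields $\E_s V(t) = V(s)$, i.e.\ $G = 0$, $\P^\x$-almost surely.

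Finally, $G \in C_b(\cX)$ vanishes $\P^\x$-almost surely for every $\x$, so the characterization of the full support property recorded after Definition~\ref{def:full-support} gives $G = 0$, that is $V(s) = \E_s V(t)$. Since $s \le t$ were arbitrary, $V$ is an $\E$-martingale. I expect the main obstacle to be precisely the continuity of $\E_s V(t)$: the natural computation runs through the operator $\E$, which in general does not preserve $C_b(\cX)$, so the key idea is to reroute it through the $C_b$-semigroup $\S$ via the factorization $V(t) = \Th_t W$ with $W \in C_b(\cX; \cF_0)$.
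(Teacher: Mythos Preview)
Your proof is correct and follows essentially the same approach as the paper: both show that $\E_s V(t) - V(s)$ is continuous and vanishes $\P^\x$-almost surely for every $\x$, and then invoke full support. Your treatment of the continuity of $\E_s V(t)$ via the factorization $\E_s V(t) = \Th_s\,\S(t-s)\,\Th_{-t}V(t)$ is in fact more self-contained than the paper's, which appeals to \cite[Theorem~6.2]{dkk} for the same conclusion.
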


\begin{proof}
By assumption and Lemma~\ref{l.expectation}(d), we have
\[
\E_s V(t) = \E^\x\bigl[V(t) \mid \cF_s \bigr] = V(s) \quad \P^\x\text{-almost surely}
\]
for all $s, t \in I$ with $s \leq t$ and $\x \in \cX$. Since $\S$ is a $C_b$-semigroup, it follows that $\E_s V(t) \in C_b(\cX)$; see \cite[Theorem 6.2]{dkk}. Thus, the continuous functions $\E_s V(t)$ and $V(s)$ are equal $\P^\x$-almost surely for all $\x \in \cX$. Hence, $\E\bigl|\E_s V(t) - V(s)\bigr| = 0$ and therefore $\E_s V(t) = V(s)$, showing that $V$ is an $\E$-martingale.
\end{proof}

\begin{example}\label{Ex:4.5}
The expectation operator $\mathbb{W}$ associated to Brownian motion on $\R^d$ satisfies the full support condition
from Definition~\ref{def:full-support}.
To see this, let $0 \leq F \in C_b(\cX)$. We show that if $F \neq 0$, then $\mathbb{W} F > 0$. To that end, suppose that $F(\x) > 0$. By continuity,
we find $\eps, \delta > 0$ such that $F(\y) \geq \eps$ whenever $\md(\x, \y) < \delta$. Setting $S \coloneqq \{\y : \md(\x, \y) < \delta\}$,
it follows that $F \geq \eps \one_S$. Given $\z \in \cX$, we write $\z^+$ for its restriction $\z|_{[0,\infty)} \in \cX^+ = C([0,\infty); \R^d)$.
Denoting the restriction of $\md$ to $\cX^+$ by $\md^+$, we define $S^+ \coloneqq \{ \y^+ : \md^+(\x^+ - \x(0), \y^+) < \delta \}$.
It then follows that
\begin{align*}
    [\mathbb{W} F](\x) &\geq \eps [\mathbb{W} \one_S](\x) = \eps E\bigl[ \x \otimes_0 B \in S \bigr] 
    = \eps E [ B \in S^+] = \eps W(S^+) > 0,
\end{align*}
since $S^+$ is open and the Wiener measure $W$ has full support; see, e.g., \cite[Corollary VIII.2.3]{ry99}.
\end{example}

Example \ref{Ex:Support-condition} below shows that the full support condition is a necessary assumption in Lemma \ref{l.pmartingale}.\smallskip

There is a fundamental link between $\E$-martingales and the evolutionary semigroup $\S$ associated with $\E$. A related result in the context of nonlinear expectations is provided in \cite[Theorem 2.15]{ck25}.

\begin{thm}\label{t.martingale}
Let $I\subset \R$ be an interval and $V \colon I \to B_b(\cX)$ be measurable and adapted.  
Then, $V$ is an $\E$-martingale if and only if, for every $s, t \in I$ with $s \leq t$,  
\begin{equation}\label{eq.martingalesg}  
V(s) = \Th_s \S(t-s) \Th_{-t} V(t).  
\end{equation}  

Setting $U(t) := \Th_{-t} V(t)$ for $t \in I$, so that $U(t)$ is $\cF_0$-measurable for all $t \in I$, Equation \eqref{eq.martingalesg} is equivalent to  
\begin{equation}\label{eq.martingalerephrased}  
U(s) = \S(t-s) U(t)  
\end{equation}  
for all $s, t \in I$ with $s \leq t$.
\end{thm}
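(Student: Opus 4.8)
The plan is to reduce everything to the identity $\S(t) = \E\Th_t$ from Theorem~\ref{t.evolutionary}(a), together with the defining relation $\E_s = \Th_s \E \Th_{-s}$ and the group property of the shift $(\Th_t)_{t\in\R}$. Before computing, I would first record the domain check that makes the right-hand side of \eqref{eq.martingalesg} meaningful: since $V$ is adapted, $V(t)$ is $\cF_t$-measurable, and applying the recalled fact that $F$ is $\cF_t$-measurable iff $F = F\circ\tau_t$ shows that $\Th_{-t}V(t)$ is $\cF_0$-measurable, i.e.\ $\Th_{-t}V(t)\in B_b(\cX;\cF_0)$. This is precisely the space on which the evolutionary semigroup $\S(t-s)$ acts, so the expression $\Th_s\S(t-s)\Th_{-t}V(t)$ is well defined.

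The heart of the argument is a one-line computation. For $s\le t$ in $I$, I would substitute $\S(t-s) = \E\Th_{t-s}$ and use $\Th_{t-s}\Th_{-t} = \Th_{-s}$ (group property) to obtain
\[
\Th_s\S(t-s)\Th_{-t}V(t) = \Th_s\E\Th_{t-s}\Th_{-t}V(t) = \Th_s\E\Th_{-s}V(t) = \E_s V(t).
\]
Hence \eqref{eq.martingalesg} holds for all $s\le t$ if and only if $V(s) = \E_s V(t)$ for all $s\le t$, which is exactly the martingale property. Since $V$ is assumed measurable and adapted from the outset, this establishes the first equivalence.

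For the rephrasing, I would apply $\Th_{-s}$ to both sides of \eqref{eq.martingalesg} and use $\Th_{-s}\Th_s = \mathrm{id}$, giving $\Th_{-s}V(s) = \S(t-s)\Th_{-t}V(t)$, which is precisely $U(s) = \S(t-s)U(t)$; conversely, applying $\Th_s$ recovers \eqref{eq.martingalesg}, so the two formulations are equivalent. I do not anticipate any serious obstacle here: the content of the theorem is essentially the structural identity $\S(t) = \E\Th_t$ unwound through the shift, and the only point requiring genuine care is the measurability bookkeeping that places $\Th_{-t}V(t)$ in the domain $B_b(\cX;\cF_0)$ of $\S$. I would make sure to state that step explicitly rather than leave it implicit.
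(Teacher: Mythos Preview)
Your proof is correct and follows essentially the same approach as the paper: both arguments reduce to the identity $\Th_s\S(t-s)\Th_{-t}V(t) = \Th_s\E\Th_{t-s}\Th_{-t}V(t) = \Th_s\E\Th_{-s}V(t) = \E_s V(t)$, and the reformulation via $U$ is handled by shifting. Your explicit domain check that $\Th_{-t}V(t)\in B_b(\cX;\cF_0)$ is a welcome addition that the paper leaves implicit.
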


\begin{proof}
First assume that $V$ is an $\E$-martingale. Then, for every $s,t\in I$ with $s\leq t$,
\[
V(s) = \E_s V(t) = \Th_s \E \Th_{-s} V(t) = \Th_s\E \Th_{t-s}\Th_{-t}V(t) = \Th_s\S(t-s)\Th_{-t}V(t),
\]
which shows \eqref{eq.martingalesg}. Conversely, assuming \eqref{eq.martingalesg}, for every $s,t\in I$ with $s\leq t$,
\[
\E_sV(t) = \Th_s\E\Th_{-s}V(t) = \Th_s \E\Th_{t-s}\Th_{-t}V(t) = \Th_s \S(t-s)\Th_{-t}V(t) = V(s).
\]
The second part is a straightforward reformulation.
\end{proof}

Combining the previous result with Theorem~\ref{t.mildsolution} and  Theorem~\ref{t.cb.fvp} and Theorem~\ref{t.mildstrong}, 
we immediately obtain the following result:

\begin{cor}
    \label{c.martingalemild}
Let $T>0$, $V\in B_b([0,T]\times \cX)$ be adapted and set $U(t) := \Th_{-t} V(t)$ for all $t \in [0,T]$ as well as $F:=U(T)$.
\begin{enumerate}
    [\upshape (a)]
    \item If $V$ is an $\E$-martingale, then $U$ is a mild solution of the FVP \[\partial_t U(t) \in - \Af U(t)\quad\mbox{with}\quad U(T) = F.\]
    \item Assume additionally that $\S$ is a $C_b$-semigroup and $V\in C_b([0,T]\times \cX)$. Then, $V$ is a continuous $\E$-martingale if and only if $U$ is the unique continuous mild solution of the FVP
    \[
    U(t) = - \A U(t)\quad\mbox{with}\quad U(T) = F.
    \] 
    Moreover, if $F\in D(\A)$, then $U$ is a strong
    solution of this FVP.
\end{enumerate}
\end{cor}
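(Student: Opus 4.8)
The plan is to reduce the whole statement to the FVP theory of Section~\ref{sec:FVP}, applied to the transition semigroup $\S$ on $B_b(\cX;\cF_0)\cong B_b(\cX^-)$ with full generator $\Af$, taking $\varphi\equiv 0$ and $f=F$. The first thing I would record is that the substitution $U(t)=\Th_{-t}V(t)$ indeed produces admissible data: since $V$ is adapted, $V(t)$ is $\cF_t$-measurable, so $U(t)=\Th_{-t}V(t)$ is $\cF_0$-measurable and hence may be viewed as an element of $B_b(\cX^-)$, and likewise $F=U(T)$. With this identification, the results of Section~\ref{sec:FVP}, which are stated for a transition semigroup on $B_b(X)$ over a generic Polish space, apply verbatim with $X$ replaced by $\cX^-$ and $\af$ by $\Af$.

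For part (a), I would invoke Theorem~\ref{t.martingale}: the $\E$-martingale property of $V$ is equivalent to $U(s)=\S(t-s)U(t)$ for all $s\le t$ in $[0,T]$. Taking $t=T$ yields $U(t)=\S(T-t)F$ for every $t\in[0,T]$, which is exactly the variation-of-constants expression of Proposition~\ref{p.existence.af.fvp} with $\varphi\equiv 0$ and $f=F$. Thus $U$ is a mild solution of $\partial_t U\in-\Af U$, $U(T)=F$.

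For part (b) the extra hypotheses are that $\S$ is a $C_b$-semigroup and $V\in C_b([0,T]\times\cX)$. I would first note that $U$ is then continuous, because $(t,\x)\mapsto\vt_{-t}\x$ is jointly continuous and $U(t,\x)=V(t,\vt_{-t}\x)$; together with the $\cF_0$-measurability this places $U(t)$ in $C_b(\cX;\cF_0)\cong C_b(\cX^-)$ and $F$ in $C_b(\cX^-)$. Now I run the equivalence. If $V$ is a continuous $\E$-martingale, Theorem~\ref{t.martingale} again gives $U(t)=\S(T-t)F$, which by Theorem~\ref{t.cb.fvp}(b) (with $\varphi\equiv 0$) is precisely the unique continuous mild solution of $\partial_t U=-\A U$, $U(T)=F$. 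Conversely, if $U$ is that unique continuous mild solution, then $U(t)=\S(T-t)F$ by the same theorem, and the semigroup law $\S(t-s)\S(T-t)=\S(T-s)$ gives $U(s)=\S(t-s)U(t)$ for all $s\le t$, so Theorem~\ref{t.martingale} returns the $\E$-martingale property, with continuity automatic from $V\in C_b$. Finally, if $F\in D(\A)$, then Theorem~\ref{t.mildstrong}(c) (with $\varphi\equiv 0$, whose derivative lies trivially in $\mathfrak{L}^1(0,T;C_b(\cX^-))$) produces a strong solution; by Theorem~\ref{t.mildstrong}(a) it is a continuous mild solution, hence coincides with $U$ by uniqueness, so $U$ itself is a strong solution.

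The argument is essentially bookkeeping, and I do not anticipate a genuine obstacle. The two points that I would state explicitly, since they carry the argument, are (i) the verification that the shifted data $U(t)$ and $F$ are genuinely $\cF_0$-measurable, i.e.\ functions on $\cX^-$, so that the FVP theory for $\Af$ applies; and (ii) the transfer of joint continuity from $V$ to $U$ in part (b) via continuity of the shift, which is exactly what lets us land in the continuous-solution theory (Theorems~\ref{t.cb.fvp} and~\ref{t.mildstrong}) rather than only the measurable one.
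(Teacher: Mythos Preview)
Your proposal is correct and follows essentially the same approach as the paper, which simply states that the corollary follows by combining Theorem~\ref{t.martingale} with Theorem~\ref{t.mildsolution}, Theorem~\ref{t.cb.fvp} and Theorem~\ref{t.mildstrong}. You have spelled out the details (the $\cF_0$-measurability of $U(t)$, the joint continuity of $U$, and the semigroup-law step in the converse of (b)) more explicitly than the paper does, but the route is identical.
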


\begin{rem}
We point out that in part (a) of Corollary~\ref{c.martingalemild}, the two statements are not equivalent. This is due to the fact that, in this case, the FVP $\partial_t U(t) \in -\Af U(t)$ with $U(T) = F$ is, in general, not uniquely solvable. Indeed, it follows from Theorem~\ref{t.mildsolution} (with $\varphi = 0$) that $U$ is a mild solution if and only if it is nearly equal to the function $t \mapsto \S(T-t)F$. However, Theorem~\ref{t.martingale} shows that the latter is the only solution of the FVP that gives rise to an $\E$-martingale.
\end{rem}

We next address the question, when an adapted process $V$ can be compensated to become a martingale by 
subtracting an absolutely continuous process $t\mapsto \int_0^t\Psi(s)\, ds$. We note that $\Psi \in \mathfrak{L}^1(0,T; B_b(\cX))$
if and only if $\Phi$, defined by $\Phi(t)\coloneqq \Th_{-t}\Psi(t)$, belongs to $\mathfrak{L}^1(0,T; B_b(\cX))$. In this case, $\Psi$ is adapted
if and only if $\Phi(t)$ is $\cF_0$-measurable for all $t\in (0,T)$.

\begin{thm}
    \label{t.compensated}
    Let $T>0$ and $V\in B_b([0,T]\times\cX)$ and $\Psi \in \mathfrak{L}^1(0,T; B_b(\cX))$ be adapted. 
    Further, for $t\in[0,T]$, define $U(t) \coloneqq  \Th_{-t}V(t)$, $F\coloneqq \Th_{-T}V(T)$ and $\Phi(t) \coloneqq \Th_{-t}\Psi(t)$
    as well as
    \[
    M(t) \coloneqq V(t) - \int_0^t\Psi(s)\, ds.
    \]
    Then, the following are equivalent:
    \begin{enumerate}[\upshape (i)]
        \item $M$ is an $\E$-martingale.
        \item $U(t) = \S(T-t)U(T) - \int_t^T \S(r-t) \Phi(r)\, dr$ for all $t\in[0,T]$.
    \end{enumerate}
    In this case, $U$ is a mild
        solution of the FVP \[\partial_t U(t) \in -\Af U(t) + \Phi(t)\quad\mbox{with}\quad U(T)=F.\]
\end{thm}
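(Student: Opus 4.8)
The plan is to first reduce the martingale property of $M$ to a shifted, compensated version of the semigroup identity in Theorem~\ref{t.martingale}, and then to recognize that identity, evaluated at the terminal time, as the variation of constants formula from Proposition~\ref{p.existence.af.fvp}.

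First I would record that $M$ is measurable and adapted: $V$ is adapted by hypothesis, and since $\Psi$ is adapted, the pointwise integral $t \mapsto \int_0^t \Psi(r)\, dr$ is $\cF_t$-measurable and bounded by $\int_0^T \|\Psi(r)\|_\infty\, dr < \infty$. Thus $M \in B_b([0,T]\times\cX)$ is adapted, and being an $\E$-martingale amounts to the property $M(s) = \E_s M(t)$ for $s \le t$. Splitting $\int_0^t = \int_0^s + \int_s^t$ and using that $\int_0^s \Psi(r)\, dr$ is $\cF_s$-measurable, Lemma~\ref{l.expectation}(a) gives $\E_s \int_0^s \Psi(r)\, dr = \int_0^s \Psi(r)\, dr$, so that after cancellation the martingale property is equivalent to the identity
\[
V(s) = \E_s V(t) - \E_s \int_s^t \Psi(r)\, dr
\]
holding for all $s \le t$; I will refer to this as $(\ast)$.

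Next I would translate $(\ast)$ into a statement about $U(t) = \Th_{-t}V(t)$ and $\Phi$. Using $\E_s = \Th_s\E\Th_{-s}$, the identity $\S(h) = \E\Th_h$, and the substitutions $V(t) = \Th_t U(t)$ and $\Psi(r) = \Th_r\Phi(r)$, one computes $\E_s V(t) = \Th_s\S(t-s)U(t)$ and, after pulling $\Th_{-s}$, $\E$, and $\Th_s$ through the pointwise time-integral, $\E_s \int_s^t \Psi(r)\,dr = \int_s^t \Th_s\S(r-s)\Phi(r)\,dr$. Applying the invertible operator $\Th_{-s}$ then shows that $(\ast)$ is equivalent to
\[
U(s) = \S(t-s)U(t) - \int_s^t \S(r-s)\Phi(r)\, dr
\]
for all $s \le t$; call this $(\ast\ast)$. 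Condition (ii) is exactly $(\ast\ast)$ in the case $t = T$, so one implication is immediate; for the converse I would apply $\S(t-s)$ to (ii) written at the time $t$, invoke the semigroup law in the form $\S(t-s)\S(T-t) = \S(T-s)$ and $\S(t-s)\S(r-t) = \S(r-s)$ to pull it through the integral, and subtract $\int_s^t \S(r-s)\Phi(r)\,dr$, recovering (ii) at the time $s$, which is $(\ast\ast)$. This yields the equivalence of (i) and (ii). Finally, (ii) is precisely the variation of constants formula of Proposition~\ref{p.existence.af.fvp}, applied to the transition semigroup $\S$ with full generator $\Af$, terminal value $F = U(T)$ and inhomogeneity $\Phi \in \mathfrak{L}^1(0,T; B_b(\cX;\cF_0))$; hence $U$ is a mild solution of the stated FVP.

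The step I expect to be the main obstacle is the bookkeeping in the second paragraph, namely justifying that the bp-continuous kernel operators $\E$, $\Th_s$, $\Th_{-s}$ and the semigroup operators commute with the time-integrals. Each such interchange rests on the kernel representation of these operators together with Fubini's theorem for the finite measures $\P^\x = \k(\x,\cdot)$, in the spirit of Lemma~\ref{l.fullgen}(c) and the proof of Lemma~\ref{l.measurable}; some care is needed because the integrals here are defined pointwise in $\x$ rather than as Bochner integrals.
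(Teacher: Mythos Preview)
Your proof is correct and follows essentially the same route as the paper: both reduce the $\E$-martingale property to the shifted identity $U(s)=\S(t-s)U(t)-\int_s^t\S(r-s)\Phi(r)\,dr$ and then invoke Proposition~\ref{p.existence.af.fvp}. The only cosmetic difference is that for (i)$\Rightarrow$(ii) the paper introduces the auxiliary martingale $N(r)\coloneqq\Th_{-t}M(t+r)$ and evaluates at the endpoints, whereas you work directly with the identity $(\ast)$; and for (ii)$\Rightarrow$(i) the paper computes $\Th_s\S(t-s)\Th_{-t}M(t)$ explicitly, while you derive $(\ast\ast)$ by applying $\S(t-s)$ to (ii) and using the semigroup law --- but the underlying computations are the same.
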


\begin{proof}
    (i) $\Rightarrow$ (ii). Assume that $M$ is an $\E$-martingale. Fix $t\in [0,T)$, and define
\[
N(r) \coloneqq \Th_{-t}M(t+r)\quad\mbox{for all }r\in [0,T-t].
\]
Then, $N$ is an $\E$-martingale as for $0\leq s \leq r \leq T-t$,
\begin{align*}
    \E_sN(r) & = \Th_s\E\Th_{-(t+s)}M(t+r) = \Th_{-t}\E_{t+s} M(t+r) = \Th_{-t}M(t+s) = N(s).
\end{align*}
Consequently, it holds $N(0) = \E N(T-t)$, which yields
\begin{align*}
U(t)-\Th_{-t}\int_0^t\Th_s\Phi(s)\, ds & = \E N(T-t) = \E \Big[ \Th_{-t} V(T)  - \Th_{-t}\int_0^{T}\Psi(s)\, ds\Big]\\
& = \E \Big[ \Th_{T-t}U(T) - \int_0^T \Th_{s-t}\Phi(s)\, ds\Big],
\end{align*}
and therefore
\[
U(t)=\S(T-t) U(T)-\int_t^{T} \S(s-t)\Phi(s)\,ds.
\]
\smallskip

(ii) $\Rightarrow$ (i). Assume that $U(t) = \S(T-t)U(T) - \int_t^T \S(t-s)\Phi(s)\, ds$ for all $t\in [0,T]$. 
We define
\[
M(t) = \Th_t U(t) - \int_0^t \Th_s \Phi(s)\, ds.
\]
Then, clearly, $M$ is adapted. Fix $0\leq s \leq t \leq T$. Then 
\begin{align}
& \Th_s\S(t-s)\Th_{-t}M(t)  = \Th_s\S(t-s)U(t) - \E_s \int_0^t \Th_r\Phi(r)\, dr\notag\\
& = \Th_s\S(t-s)U(t) - \int_0^s \Th_r\Phi(r)\, dr - \E_s \int_s^t \Th_r\Phi(r)\, dr,\label{eq.step1}
\end{align}
where we used in the second equality Lemma \ref{l.expectation}(b) and the fact that the integral until time $s$ is $\cF_s$-measurable. 
By assumption, \[U(s) = \S(t-s)U(t)-\int_s^t \S(r-s) \Phi(r)\,dr,\] so that the last term in \eqref{eq.step1} becomes
\[
\E_s \int_s^t \Th_r\varphi(r)\, dr = \Theta_s \int_s^t \S(r-s)\Phi(r)\,dr= \Theta_s \big[\S(t-s)U(t)-U(s)\big].
\]
Plugging this into \eqref{eq.step1} yields
\[
 \Th_s\S(t-s)\Th_{-t}M(t) = \Theta_s U(s)- \int_0^s \Th_r\varphi(r)\, dr =M(s).
\]
By Theorem~\ref{t.martingale}, $M$ is an $\E$-martingale.
\end{proof}

As a corollary, we obtain the following extension of \cite[Proposition 4.1.7]{ek}.

\begin{cor}
    \label{c.martingaleproblem}
    Let $F, G\in B_b(\cX; \cF_0)$ and put
    \[
    M(t) \coloneqq \Th_t F - \int_0^t \Th_s G\, ds
    \]
    for all $t\geq 0$. Then, $M$ is an $\E$-martingale if and only if $F\in D(\Af)$ and $G\in \Af F$.
\end{cor}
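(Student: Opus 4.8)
\textbf{The plan} is to recognize the corollary as the special case of Theorem~\ref{t.compensated} in which the process $U$ is constant in time, obtained by feeding in $V(t) \coloneqq \Th_t F$ and $\Psi(t) \coloneqq \Th_t G$. With these choices the compensated process is
\[
M(t) = V(t) - \int_0^t \Psi(s)\, ds = \Th_t F - \int_0^t \Th_s G\, ds,
\]
exactly the process in the statement. The first task is therefore to check that the hypotheses of Theorem~\ref{t.compensated} are met: $V(t) = \Th_t F$ is bounded measurable and adapted, since shifting an $\cF_0$-measurable function by $\Th_t$ produces an $\cF_t$-measurable one (a direct consequence of the identifications in \cite{dkk}); similarly $\Psi(t) = \Th_t G$ is adapted because $\Phi(t) = \Th_{-t}\Psi(t) = G$ is $\cF_0$-measurable; and $\Psi \in \mathfrak{L}^1(0,T; B_b(\cX))$ because $\|\Th_s G\|_\infty = \|G\|_\infty$ is constant in $s$, so $\int_0^T \|\Psi(s)\|_\infty\, ds = T\|G\|_\infty < \infty$.

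Next I would compute the transformed data. Since $(\Th_t)_{t\in\R}$ is a group, $U(t) = \Th_{-t}V(t) = \Th_{-t}\Th_t F = F$ and $\Phi(t) = \Th_{-t}\Psi(t) = G$ for all $t$, both independent of $t$. Hence condition (ii) of Theorem~\ref{t.compensated}, namely $U(t) = \S(T-t)U(T) - \int_t^T \S(r-t)\Phi(r)\, dr$, collapses to
\[
F = \S(T-t)F - \int_t^T \S(r-t)G\, dr \quad \text{for all } t\in[0,T].
\]
Substituting $s = r-t$ and writing $\sigma = T-t$ turns this into $\S(\sigma)F - F = \int_0^\sigma \S(s)G\, ds$ for all $\sigma\in[0,T]$. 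Because all integrals are understood pointwise, this is precisely the defining relation \eqref{eq.fullgen} of the full generator $\Af$ of $\S$, restricted to the range $\sigma \le T$.

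To pass from a fixed horizon to the statement over $[0,\infty)$ I would use that the martingale property $M(s) = \E_s M(t)$ is a pairwise condition, so $M$ is an $\E$-martingale on $[0,\infty)$ if and only if it is one on $[0,T]$ for every $T>0$; applying Theorem~\ref{t.compensated} for each such $T$ and taking the union shows that $M$ is an $\E$-martingale if and only if $\S(\sigma)F - F = \int_0^\sigma \S(s)G\, ds$ holds for \emph{all} $\sigma \ge 0$, which by Definition~\ref{d.transition} is exactly $F\in D(\Af)$ with $G\in \Af F$. I expect no serious obstacle here: the adaptedness check and the change of variables are routine, and the only point requiring a word of care is this horizon-matching between the finite-$T$ formulation of Theorem~\ref{t.compensated} and the unbounded time range in the corollary.
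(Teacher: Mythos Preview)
Your proposal is correct and follows essentially the same route as the paper: apply Theorem~\ref{t.compensated} with $V(t)=\Th_tF$ and $\Psi(t)=\Th_tG$, so that $U\equiv F$ and $\Phi\equiv G$, and then recognize condition~(ii) as the defining relation~\eqref{eq.fullgen} for $\Af$ after the substitution $\sigma=T-t$ and letting $T>0$ vary. The paper's proof is more terse about the hypothesis checks and the horizon-matching, but the argument is the same.
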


\begin{proof}
    It follows from Theorem \ref{t.compensated}, applied to $V(t) = \Th_t F$ and $\Psi(t) = \Th_t G$, that $M$ is an $\E$-martingale 
    if and only if
    \[
    F = \S(T)F - \int_0^T \S(s)G\, ds
    \]
    for all $T>0$. By definition, see Equation \eqref{eq.fullgen}, this is equivalent to $F\in D(\Af)$ and $G\in \Af F$.
\end{proof}

Under additional continuity assumptions, we can further strengthen the above results. We note that in the situation
of Theorem \ref{t.compensated}, it is $\Psi(t) \in C_b(\cX)$ if and only if $\Phi(t)\in C_b(\cX)$ for all $t\in (0,T)$.

\begin{thm}
    \label{t.continuousmartingale}
    In the situation of Theorem \ref{t.compensated}, assume additionally that $\S$ is an evolutionary $C_b$-semigroup and that 
    $F, \Phi(t) \in C_b(\cX)$. Morover, assume that the equivalent conditions of Theorem \ref{t.compensated} are satisfied so that
    $M$ is an $\E$-martingale. 
    \begin{enumerate}
        [\upshape (a)]
        \item $M$ is a continuous $\E$-martingale and $U$ is the unique continuous mild solution of the FVP
            \begin{equation}\label{eq.FVP.A}
            \partial_t U(t) = -\A U(t) + \Phi(t)\quad\mbox{with}\quad U(T) = F.
            \end{equation}  
            Moreover, $\Phi$ is the pointwise weak derivative of 
            \begin{equation}\label{eq.FVP.B}
            t\mapsto U(t) + \A \int_0^t U(s)\, ds.
            \end{equation}
        \item If $\Phi$ is pointwise continuous on $[0,T)$, then $\Phi$ equals the pointwise classical derivative of \eqref{eq.FVP.B}.
        \item If $\Phi\in C_{b,\mathrm{loc}}([0,T)\times \cX)$ and the pointwise classical
        derivative $\partial_t U$ exists on $[0,T)$ and belongs to $C_{b,\mathrm{loc}}([0,T)\times \cX)$, then $U$ is a strong solution of \eqref{eq.FVP.A}.
        \item If $F\in D(\A)$ and $\Phi$ has a pointwise classical derivative $\partial_t\Phi$ on $[0,T)$, which belongs
        to $\mathfrak{L}^1(0,T; C_b(\cX))$, then $U$ is a strong solution of \eqref{eq.FVP.A}.
        \end{enumerate}
        We recall that if $U$ is a strong solution of \eqref{eq.FVP.A}, then $\Phi(t) = \dt U(t) + \A U(t)$ for all $t \in [0,T)$.
\end{thm}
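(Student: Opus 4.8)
The plan is to observe that, once condition~(ii) of Theorem~\ref{t.compensated} is in force, the formula
\[
U(t) = \S(T-t)F - \int_t^T \S(r-t)\Phi(r)\,dr
\]
identifies $U$ with the variation-of-constants expression \eqref{eq.continuous.mild} for the FVP \eqref{eq.cb.fvp} associated with the $C_b$-generator $A=\A$, inhomogeneity $\varphi=\Phi$, and terminal value $f=F$. Every assertion then reduces to a corresponding statement of Section~\ref{sec:FVP}, applied with $S=\S$. The standing input to check is that $\Phi\in\mathfrak{L}^1(0,T;C_b(\cX))$, which holds since $\Phi(t)\in C_b(\cX)$ for all $t$ and $\Phi\in\mathfrak{L}^1(0,T;B_b(\cX))$ (the latter being equivalent to $\Psi\in\mathfrak{L}^1(0,T;B_b(\cX))$).

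For part~(a), Theorem~\ref{t.cb.fvp}(b), applied to the $C_b$-semigroup $\S$, shows at once that $U\in C_b([0,T]\times\cX)$ and that $U$ is the \emph{unique} continuous mild solution of \eqref{eq.FVP.A}, while Theorem~\ref{t.cb.fvp}(c) yields that $\Phi$ is the pointwise weak derivative of \eqref{eq.FVP.B}. The only point not covered by Section~\ref{sec:FVP} is the \emph{continuity} of $M$ (that $M$ is an $\E$-martingale being assumed). For this I would write $M(t)=\Th_t U(t)-\int_0^t\Th_s\Phi(s)\,ds$ and use that $(t,\x)\mapsto\vt_t\x$ is continuous on $\R\times\cX$: since $U$ is jointly continuous, so is $(t,\x)\mapsto[\Th_t U(t)](\x)=U(t,\vt_t\x)$; likewise $(s,\x)\mapsto[\Th_s\Phi(s)](\x)=\Phi(s,\vt_s\x)$ is jointly continuous and dominated by $\|\Phi(s)\|_\infty$, so integrating in $s$ preserves joint continuity in $(t,\x)$ by dominated convergence. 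Hence $M\in C_b([0,T]\times\cX)$, i.e.\ $M$ is a continuous $\E$-martingale.

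Part~(b) is then precisely Theorem~\ref{t.cb.fvp}(d): pointwise continuity of $t\mapsto\Phi(t,\x)$ upgrades the weak derivative from~(a) to a classical one. For part~(c), $U$ is already the continuous mild solution and, by hypothesis, $\Phi\in C_{b,\mathrm{loc}}([0,T)\times\cX)\cap\mathfrak{L}^1(0,T;C_b(\cX))$ and $\partial_t U$ exists on $[0,T)$ in $C_{b,\mathrm{loc}}([0,T)\times\cX)$; thus Theorem~\ref{t.mildstrong}(b) applies directly and $U$ is a strong solution. For part~(d), I would first invoke Remark~\ref{rem:Cloc} to verify the standing hypothesis of Theorem~\ref{t.mildstrong}, namely that a pointwise classical derivative $\partial_t\Phi$ lying in $\mathfrak{L}^1(0,T;C_b(\cX))$ forces $\Phi\in C_{b,\mathrm{loc}}([0,T)\times\cX)$. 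Theorem~\ref{t.mildstrong}(c) then produces \emph{a} strong solution; since every strong solution is a continuous mild solution by Theorem~\ref{t.mildstrong}(a) and the continuous mild solution is unique by Theorem~\ref{t.cb.fvp}(b), that strong solution must coincide with $U$. The closing identity $\Phi(t)=\partial_t U(t)+\A U(t)$ is simply the defining equation of a strong solution, rearranged.

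The genuinely new content, and hence the main obstacle, is the joint continuity of $M$ in~(a): everything else is a faithful translation of the FVP theory, but one must verify that precomposing the continuous solution $U$ with the jointly continuous shift $(t,\x)\mapsto\vt_t\x$ and integrating the shifted inhomogeneity stays within $C_b([0,T]\times\cX)$. A secondary subtlety is the uniqueness step in~(d), where Theorem~\ref{t.mildstrong}(c) asserts only existence, so one must use uniqueness of the continuous mild solution to identify the produced strong solution with $U$ itself.
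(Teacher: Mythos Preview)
Your approach is essentially the same as the paper's, which proves (a)--(b) by citing Theorem~\ref{t.cb.fvp} (after noting $U\in C_b([0,T]\times\cX)$ via Lemma~\ref{l.measurable}(c)) and (c)--(d) by citing Theorem~\ref{t.mildstrong}(b)--(c). You are in fact more careful than the paper in two places: the paper does not spell out the continuity of $M$, and it does not make explicit the uniqueness step in~(d) or the appeal to Remark~\ref{rem:Cloc} needed to verify the standing hypothesis of Theorem~\ref{t.mildstrong}.

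One minor slip: in your argument for the continuity of $M$, the claim that $(s,\x)\mapsto\Phi(s,\vt_s\x)$ is \emph{jointly} continuous is not justified by the hypotheses --- only $\Phi(s)\in C_b(\cX)$ for each $s$ is assumed, while $s\mapsto\Phi(s)$ may be merely measurable. The conclusion is still correct, however: for each fixed $s$ the map $\x\mapsto\Phi(s,\vt_s\x)$ is continuous, and combined with the domination $|\Phi(s,\vt_s\x)|\le\|\Phi(s)\|_\infty\in L^1(0,T)$ this yields joint continuity of $(t,\x)\mapsto\int_0^t\Phi(s,\vt_s\x)\,ds$ by the same splitting-and-dominated-convergence argument as in Lemma~\ref{l.measurable}(c).
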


\begin{proof}
    (a).  Using the assumption that $F, \Phi(t)\in C_b(\cX)$, Lemma \ref{l.measurable}(c) implies that
    $U \in C_b([0,T]\times\cX)$. It follows from Theorem \ref{t.cb.fvp}, that $U$
    is a continuous mild solution of \eqref{eq.FVP.A}. This Theorem also
    implies the rest of (a) as well as part (b).

    (c) and (d). These follow from Theorem~\ref{t.mildstrong}(b) and Theorem~\ref{t.mildstrong}(c), respectively.
\end{proof}

We end this section with an illustration of our results. To that end, fix $T>0$ and let $G\in B_b(\cX; \cF_T)$ be given. Then, clearly,
$M(t) \coloneqq \E_t G$ defines an $\E$-martingale.  The connection between this $\E$-martingale and solutions of associated
final value problems is described by Theorem \ref{t.martingale} and Corollary \ref{c.martingalemild}. However, for certain final 
values $G$, we can obtain a characterization via different final value problems by means of Theorem \ref{t.compensated} and
Theorem \ref{t.continuousmartingale}.

Given $f\in B_b(X)$ and $a,b,t \in \R$, we define the functions $F_a^b(f)$ and $F_t(f)$, which are elements of $B_b(\cX)$,
by setting
\[
[F_a^b(f)](\x) \coloneqq \int_a^bf(\x(s))\, ds \qquad\mbox{and}\qquad [F_t(f)](\x) \coloneqq f(\x(t)).
\]
We note that these functions play an important role in the description of the generator of the shift group $(\Th_t)_{t\in \R}$, see
\cite[Section 3]{dkk}.

\begin{example}
    \label{ex.integralmean}
We set $G = F_0^T(f)$ for some $f \in B_b(X)$ and $M(t) \coloneqq \E_tG$. 
Noting that $F_0^T(f) = F_0^t(f) + F_t^T(f)$ and that $F_0^t(f)$  
is $\cF_t$-measurable, we can split the $\E$-martingale as  
\[
M(t) := \E_t F_0^T(f) =  \E_t F_t^T(f)  + F_0^t(f) = V(t) - \int_0^t \Psi(s) \, ds,
\]
where $V(t) := \E_t F_t^T(f)$ and $\Psi(s) = -F_s(f)$. 

Setting, as above, $U(t) \coloneqq \Th_{-t}V(t)$ and $\Phi(s) \coloneqq \Th_{-s}\Psi(s) = -F_0(f)$, it follows from Theorem~\ref{t.compensated} 
that $U$ is a mild solution of the FVP  
\[
\partial_t U(t) \in -\Af U(t) - F_0(f) \quad \mbox{with} \quad U(T) = 0.
\]

Suppose, in addition, that $\S$ is a $C_b$-semigroup with generator $\A$ and $f \in C_b(X)$. Then, we have $\Phi \equiv -F_0(f) \in C_b([0,T] \times \cX) \subset \mathfrak{L}^1(0,T; C_b(\cX))$ and Theorem \ref{t.continuousmartingale} yields that $U$ is the unique mild solution of the FVP  
\[
\partial_t U(t) = -\A U(t) - F_0(f) \quad \mbox{with} \quad U(T) = 0.
\]  
In fact, as $\partial_t\Phi(t) \equiv 0$ and $U(T)=0\in D(\A)$, $U$ is even a strong solution of this equation.
\end{example}  

\section{Examples in the Markovian setting}\label{sect:Markov}

We note that since $\cF(\{0\}) = \sigma(\pi_0)$, a function $F\in B_b(\cX)$ is $\cF(\{0\})$-measurable if and only if
$F=F_0(f)$ for some $f\in B_b(X)$. In this section, we consider evolutionary semigroups that preserve functions of form.

\begin{defn}
    Let $\S$ be an evolutionary $C_b$-semigroup. Then $\S$ is called \emph{Markovian}, if $\S(t) F$ is $\cF(\{0\})$-measurable
    for every $F\in B_b(\cX; \cF(\{0\})$.
\end{defn}

For the rest of this section, $\S$ denotes a Markovian, evolutionary $C_b$-semigroup. We denote its $C_b$-generator by $\A$ and 
its expectation operator by $\E$. As before, we denote by $\P^\x$ the measures derived from the kernel of $\E$. It follows from 
the definition, that a Markovian $C_b$-semigroup $\S$ induces a $C_b$-semigroup $S$ on $X$, by defining $S(t)$ via
\[
\S(t)F_0(f) = F_0(S(t)f)\qquad \mbox{for all } t\geq 0, f\in B_b(X).
\]
Note that the semigroup $S$ is merely the restriction of $\S$ to $B_b(\cX, \cF(\{0\}))$.
The $C_b$-generators $\A$ of $\S$ and $A$ of $S$ are related by  
\begin{equation}\label{eq:DMarkov}
F_0(f) \in D(\A) \quad \text{if and only if} \quad f \in D(A),
\end{equation}  
and we have $\A F_0(f) = F_0(A f)$, see \cite[Theorem 6.13(a)]{dkk}.

\begin{example}
    Let $\mathbb{W}$ be the expectation operator associated to Brownian motion from Example \ref{ex.wieneroperator}, and
    $\mathbb{G}$ be the associated evolutionary semigroup. Then $\mathbb{G}$ is Markovian and the induced semigroup on 
    $C_b(\R^d)$ is the Gaussian semigroup from Example \ref{ex.heat}. We thus have
    \[
    \mathbb{G}(t)F_0(f) = F_0(G(t)f)
    \]
    for all $t\geq 0$ and $f\in B_b(\R^d)$ in this example.
\end{example}

The terminology `Markovian' stems from the fact that, under the measure $\P^\x$, the coordinate process
$(Z_t)_{t\geq 0}$, defined by $Z_t(\y) = \y(t)$, is a Markov process with transition semigroup $S$, 
starting at $\x(0)$, see \cite[Theorem 6.7(b)]{dkk}. For more information on the Markovian setting, 
we refer to \cite[Section 6.2]{dkk}.\smallskip 

When studying $\E$-martingales, or, equivalently, solutions of FVP for $\A$, it will be important for us
that if $\S$ is Markovian, then $\E F$ is $\cF(\{0\})$-measurable whenever $F$ is $\cF([0,\infty))$-measurable. As a matter of fact,
this condition is equivalent to $\S$ being Markovian, see \cite[Theorem 6.7]{dkk}. It is a consequence of this special structure of $\E$
that, at least in some particular cases, we can rewrite certain FVP for $\A$ on the path space $\cX$ as FVP for $A$ 
on the state space $X$. The latter are much easier to handle. We have the following result.

\begin{cor}
    \label{c.continuousmartingaleMarkov}
    Let $\S$ be an evolutionary $C_b$-semigroup that is Markovian and let $T>0$. Moreover, let $\varphi \in \mathfrak{L}^1(0,T; C_b(X))$
    and $u\in C_b([0,T]\times X)$. We set $f= u(T) \in C_b(X)$. Then, the function
        \begin{equation}\label{eq:mart:Markov}
            M(t, \x )= u(t, \x(t)) - \int_0^t\varphi(s, \x(s))\, ds
        \end{equation}
    defines a continuous $\E$-martingale if and only if $u$ is the  unique continuous mild solution of the FVP
        \begin{equation}\label{eq:FVP:A:Markov}
        \partial_t u(t) = -A u(t) + \varphi(t)\quad\mbox{with}\quad u(T) = f.
        \end{equation}
    In that case, $\varphi$ is the pointwise weak derivative of $ t\mapsto u(t) + A \int_0^t u(s)\, ds$. 
    If $u$ is a strong solution of \eqref{eq:FVP:A:Markov}, then $ \varphi(t) = \partial_t u(t) + A u(t)$ for all $t\in[0,T)$.
\end{cor}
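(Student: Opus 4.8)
The plan is to recognize that $M$ is precisely an instance of the compensated process treated in Theorem~\ref{t.compensated}, so that the entire statement reduces to transporting that result from the path space $\cX$ down to the state space $X$ via the Markovian structure. Concretely, I would set $V(t)\coloneqq F_t(u(t))$ and $\Psi(s)\coloneqq F_s(\varphi(s))$, so that $V(t,\x)=u(t,\x(t))$, $\Psi(s,\x)=\varphi(s,\x(s))$ and $M(t)=V(t)-\int_0^t\Psi(s)\,ds$ is exactly the process considered in Theorem~\ref{t.compensated}. Both $V$ and $\Psi$ are adapted since $\pi_t$ is $\cF(\{t\})$-measurable, $V\in B_b([0,T]\times\cX)$ because $\|V(t)\|_\infty=\|u(t)\|_\infty$, and $\Psi\in\mathfrak{L}^1(0,T;B_b(\cX))$ because $\|\Psi(s)\|_\infty=\|\varphi(s)\|_\infty$.

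The key intertwining identity I would use is $\Th_{-t}F_t(g)=F_0(g)$ for every $g\in B_b(X)$, which follows in one line from $[\vt_{-t}\x](t)=\x(0)$. This yields $U(t)=\Th_{-t}V(t)=F_0(u(t))$, $\Phi(s)=\Th_{-s}\Psi(s)=F_0(\varphi(s))$ and $F=U(T)=F_0(f)$. Theorem~\ref{t.compensated} then states that $M$ is an $\E$-martingale if and only if $U(t)=\S(T-t)U(T)-\int_t^T\S(r-t)\Phi(r)\,dr$. At this point I would invoke the Markovian identity $\S(\sigma)F_0(g)=F_0(S(\sigma)g)$ together with the linearity of $F_0$ and the fact that the integrals are pointwise, to rewrite the right-hand side as $F_0\big(S(T-t)f-\int_t^T S(r-t)\varphi(r)\,dr\big)$. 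Since $F_0$ is injective (evaluate at constant paths), this is equivalent to $u(t)=S(T-t)f-\int_t^T S(r-t)\varphi(r)\,dr$, which by Theorem~\ref{t.cb.fvp}(b) is precisely the characterization of the unique continuous mild solution of \eqref{eq:FVP:A:Markov}. This establishes the asserted equivalence.

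For the remaining claims I would argue as follows. Continuity of the $\E$-martingale $M$ comes from Theorem~\ref{t.continuousmartingale}(a), whose hypotheses $F=F_0(f)\in C_b(\cX)$ and $\Phi(t)=F_0(\varphi(t))\in C_b(\cX)$ hold because $f,\varphi(t)\in C_b(X)$. The statement that $\varphi$ is the pointwise weak derivative of $t\mapsto u(t)+A\int_0^t u(s)\,ds$ is then exactly Theorem~\ref{t.cb.fvp}(c) applied to the continuous mild solution $u$ on $X$, so no further work is needed. Finally, if $u$ is a strong solution of \eqref{eq:FVP:A:Markov}, then by the definition of a strong solution $\partial_t u(t)=-Au(t)+\varphi(t)$, whence $\varphi(t)=\partial_t u(t)+Au(t)$.

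The heavy analytic lifting has already been carried out in Theorems~\ref{t.compensated}, \ref{t.cb.fvp} and \ref{t.continuousmartingale}, so I do not expect a serious obstacle; the only point requiring genuine care is the descent from $\cX$ to $X$. There one must justify commuting $F_0$ with the pointwise time-integrals and with $\S(\sigma)$ (respectively with $\A$, via the relation $\A F_0(f)=F_0(Af)$ recorded in \eqref{eq:DMarkov}), and then use injectivity of $F_0$ to turn identities on the path space into equivalent identities on the state space. None of these steps is deep, but the bookkeeping—especially ensuring that the $C_b$- and Markovian hypotheses are invoked exactly where Theorem~\ref{t.cb.fvp} and Theorem~\ref{t.continuousmartingale} require them—is where the proof must be written with precision.
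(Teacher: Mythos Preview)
Your proposal is correct and follows essentially the same route as the paper: set $V(t)=F_t(u(t))$, $\Psi(t)=F_t(\varphi(t))$, apply Theorem~\ref{t.compensated}, use the Markovian identity $\S(\sigma)F_0(g)=F_0(S(\sigma)g)$ and injectivity of $F_0$ to descend to $X$, and finish with Theorem~\ref{t.cb.fvp}. The only cosmetic differences are that you make the continuity of $M$ explicit via Theorem~\ref{t.continuousmartingale}(a) and check the hypotheses on $V,\Psi$ a bit more carefully, neither of which deviates from the paper's argument.
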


\begin{proof}
    Note that $M(t) = V(t) - \int_0^t \Psi(s)\, ds$, where $V(t) = F_t(u(t))$ and $\Psi(t) = F_t(\varphi(t))$. It follows that
    $U(t) \coloneqq \Th_{-t}V(t) = F_0(u(t))$ and $\Phi(t) \coloneqq \Th_{-t}\Psi(t) = F_0(\varphi(t))$. 
    By Theorem \ref{t.compensated}, $M$ is an $\E$-martingale if and only if 
    \[
    U(t) = \S(T-t)U(T) - \int_t^T \S(r-t)\Phi(r)\, dr
    \]
    for all $t\in [0,T]$.
    As $\S$ is Markovian, given the special structure of $U$ and $\Phi$, this reduces to
    \begin{align*}
    F_0(u(t)) & = F_0 (S(T-t)f) - \int_t^T F_0(S(r-t)\varphi(r))\, dr\\
    & = F_0(S(T-t)f) - F_0\Big(\int_t^T S(r-t)\varphi(r)\, dr\Big)
    \end{align*}
    for all $t\in [0,T]$. Noting that $F_0(g) = 0$ as a function on $\cX$ if and only if $g=0$ as a function on $X$, this is equivalent
    to 
    \[
    u(t) = S(T-t)f - \int_t^T S(r-t)\varphi(r)\, dr
    \]
    for all $t\in [0,T]$. By Theorem \ref{t.cb.fvp}(b), $u$ is the unique continuous mild solution of \eqref{eq:FVP:A:Markov}
    and $\varphi$ can be obtained as the pointwise weak derivative of $t\mapsto u(t) + A\int_0^tu(s)\, ds$.
\end{proof}

We now discuss some examples, in which Corollary \ref{c.continuousmartingaleMarkov} can be applied.

\begin{example}\label{ex:finalvalue}
   For $G = F_T(f)$ with $f \in C_b(X)$, we consider $M(t) := \mathbb{E}_t G$ for $t \in [0,T]$. As $\S$ is Markovian, 
   $M(t)$ is $\cF(\{t\})$-measurable, hence of the form $M(t) = F_t(u(t))$ or, written differently, $M(t, \x) = u(t,\x(t))$. By
   Corollary \ref{c.continuousmartingaleMarkov}, $M$ is a continuous $\E$-martingale if and only if $u$ is the unique continous
   mild solution of the FVP
\[
\partial_t u(t) = -A u(t) \quad \text{with} \quad u(T) = f,
\]
hence, by Theorem~\ref{t.cb.fvp}, if $u(t) = S(T - t)f$ for all $t \in [0,T]$. 
\end{example}

\begin{example}
\label{ex.integralmeanmarkovian}
We consider Example~\ref{ex.integralmean} in the Markovian setting.
Let $G = F_0^T(f)$ for some $f \in C_b(X)$. We split $G = F_0^t(f) + F_t^T(f)$
and define $V(t):=\E_t F_t^T(f)$ for all $t\in[0,T]$. As $\S$ is Markovian, $V(t)$ is $\cF(\{t\})$-measurable, hence
of the form $V(t,\x) = u(t, \x(t))$ for some function $u: [0,T]\times X\to \R$. Thus
\[
M(t, \x) = u(t, \x(t)) + \int_0^t f(\x(s))\, ds.
\]
Applying Corollary \ref{c.continuousmartingaleMarkov} with $\varphi \equiv -f$, it follows that $M$ is a continous
$\E$-martingale if and only if 
$u$ is the unique continous mild solution of the FVP  
\[
\partial_t u(t) = -A u(t) - f \quad \mbox{with} \quad u(T) = 0,
\]  
i.e., $u(t) = \int_t^T S(r-t)f\, dr$, see Theorem \ref{t.cb.fvp}.
Similar to Example~\ref{ex.integralmean}, we obtain that $u$ is even a strong solution.
\end{example}    

In our final example, we consider the special case of Brownian motion in space dimension $1$. We thus consider the 
expectation operator $\E$ from Example \ref{ex.wieneroperator} and its associated evolutionary semigroup $\S$.
Recall that the induced semigroup on $C_b(\R)$ is the Gaussian semigroup from Example~\ref{ex.heat}. We write 
\[
p_t(x, y) := \frac{1}{\sqrt{2\pi t}} \exp\left(-\frac{(x - y)^2}{2t}\right).
\]  
Note that the heat kernel $p_t$ is a solution of the heat equation, i.e., $\partial_t p_t(x, y) = \frac{1}{2} \, \partial^2_x p_t(x, y)$
for all $x,y\in \R$ and $t>0$.

\begin{example}
In the setting of Brownian motion, we illustrate our results in the context of the \emph{running maximum}. 
It is well known that  the joint process \((B_t, \sup_{0 \le s \le t} B_s)\) is Markovian \cite[Section III.3]{ry99}, 
and characterizations of local martingales of the form \(H(B_t, \sup_{0 \le s \le t} B_s)\) are provided in~\cite{Obloj06,Obloj06b}.

For $f \in C^1_b(\R)$ and $0 \leq t \leq T$, we define $(V_t)_{t \in [0,T]}$ by  
\[
V(t) := \E_t M_t^T(f), \quad \text{where} \quad M_t^T(f) := f\Big(\max_{s \in [t,T]} \x(s)\Big).
\]  
The Markovian structure yields that for every $t\in[0,T]$, we have
\[
V(t,\x) =u(t,\x(t))\quad\mbox{with}\quad u(t,\cdot)\in C_b(X).
\]
As a consequence of the reflection principle for Brownian motion, see \cite[Proposition III.3.7]{ry99}, it follows that
\[
u(t, x) = 2 \int_{x}^\infty f(y) p_{T-t}(x, y)\,dy,
\]  
for all $x\in\R$ and $t\in[0,T)$. Direct computation shows that
\begin{align*}
\partial_x u(t,x) & = -2f(x)\, p_{T-t}(x,x) + 2 \int_{x}^\infty f(y)\, \partial_x p_{T-t}(x, y)\, dy,\\
\frac{1}{2} \partial^2_x u(t,x)&  = -\frac{f'(x)}{\sqrt{2\pi(T - t)}} + \int_{x}^\infty f(y)\, \partial^2_x p_{T-t}(x, y)\, dy,
\end{align*}
and
\[
\partial_t u(t,x) = -2 \int_{x}^\infty f(y)\, \partial_t p_{T-t}(x, y)\, dy = -\int_{x}^\infty f(y)\, \partial^2_x p_{T-t}(x, y)\, dy.
\]
We conclude that $u(t) = u(t, \cdot)$ is a strong solution of the FVP
\[
\partial_t u(t) = -\frac{1}{2}\partial^2_x u(t) - \frac{f'}{\sqrt{2\pi(T - t)}} \quad \text{with} \quad u(T) = f.
\]
Hence, it follows from Corollary~\ref{c.continuousmartingaleMarkov} that 
\[
M(t, \x)  = u(t, \x(t)) + \int_0^t \frac{f'(\x(s))}{\sqrt{2\pi(T-s)}}\, ds
\]
is a continuous $\E$-martingale. In other words, the stochastic process
\[
E\bigg[f\bigg(\sup_{s \in [t, T]} B_s\bigg) \,\bigg| \,\sigma(B_s : s\leq t) \bigg] + \int_0^t \frac{f'(B_s)}{\sqrt{2\pi(T - s)}}\, ds
\]
is a martingale on $[0, T]$. 
\end{example}

\section{Differentiability of adapted processes}\label{sec:adapted}

In Section \ref{sect.martingale} we have seen that an adapted process $V$ can be compensated to an $\E$-martingale if and only if
the shifted process $U(t) = \Th_{-t}V(t)$ solves a certain final value problem. If $U$ is even a strong solution of
the latter, then $U$ is pointwise differentiable in time. On the other hand, the process $V$ is typically not differentiable in time,
even in the cases where $U$ is.

To see this, let us look at the situation where $V(t, \x) = u(t, \x(t))$, that frequently appears in the Markovian setting. 
In this case, $U(t, \x) = u(t, \x(0))$, and this is differentiable in time if and only if $u$ is pointwise differentiable in time.
On the other hand, for $V$ to be differentiable in time, the limit
\[
\lim_{h \to 0} \frac{V(t + h, \x) - V(t, \x)}{h} = \lim_{h \to 0} \frac{u(t + h, \x(t + h)) - u(t, \x(t))}{h}
\]
needs to exist. However, even if $u$ is smooth, the map $t \mapsto \x(t)$ is merely continuous, so the limit does not exist in general.

\subsection{The \texorpdfstring{$\E$}{}-derivative}

Throughout, we fix an evolutionary $C_b$-semigroup $\S$ with expectation operator $\E$ and $C_b$-generator $\A$.

\begin{defn}
Let $V\colon [0,T] \to C_b(\cX)$ be measurable and adapted. We say that $V$ is \emph{$\mathbb{E}$-differentiable} if, for every $t \in [0,T)$, the limit
\[
\partial_t^{\mathbb{E}} V(t) \coloneqq \lim_{h \downarrow 0} \E_t\left[\frac{V(t+h) - V(t)}{h}\right]
\]
exists in the bp-sense. 
If, in addition, $\partial_t^{\mathbb{E}} V$ belongs to $C_{b,\mathrm{loc}}([0,T) \times \cX)$, we say that $V$ is \emph{continuously $\mathbb{E}$-differentiable}.
\end{defn}

We illustrate this definition in the case of deterministic evolutions, i.e., in the case where the expectation operator
$\E$ is given by 
\begin{equation}
\label{eq.deterministic}
[\E F](\x) = F(\varphi(\x))
\end{equation}
for a map $\varphi : \cX \to \cX$. This situation was studied in \cite[Section 6.1]{dkk}, where it was proved that
Equation \eqref{eq.deterministic} defines a homogeneous expectation operator if and only if $\varphi$ is an \emph{evolution map}, i.e., it holds (i) $\varphi(\tau(\x)) = \varphi(\x)$, (ii) $\tau(\varphi(\x)) = \tau(\x)$ and (iii) $\varphi(\vt_t\varphi(\x)) = \vt_t\varphi(\x)$
for all $t\geq 0$ and $\x\in \cX$, see \cite[Proposition 6.2]{dkk}. Expectation operators of this form arise naturally in the study
of ordinary differential equations or deterministic delay equations, see \cite[Section 6.1]{dkk}.

\begin{example}
    \label{ex.detmart}
    Let $\varphi$ be an evolution map and $\E$ be the associated evolution operator, defined by \eqref{eq.deterministic}. We also set
     $\varphi_t\coloneqq \vt_{-t}\circ\varphi\circ\vt_t$. Then $[\E_tF](\x) = F(\varphi_t(\x))$.

    It follows that a  measurable adapted function $V: [0,T] \to C_b(\cX)$ is an $\E$-martingale if and only if 
    \[
    V(s, \x) = \E_sV(t,\x) = V(t, \varphi_s(\x))\quad \mbox{ for all } 0\leq s \leq t\leq T.
    \]
    In particular, we obtain that $V(0,\x) = V(0, \varphi(\x)) = V(t, \varphi(\x))$ for all $t\in [0,T]$, i.e., $V$ is 
    constant along the orbits prescribed by $\varphi$. This shows that the orbits given by $\varphi$ serve as `characteristics'
    for our evolution.

    We next consider the special situation where $u \in C_b([0,T]\times X)$ and $V(t, \x)= u(t, \x(t))$. It follows that $\E_s V(t) = u(t, \varphi_s(t, \x))$, where we write $\varphi_s(t, \x) \coloneqq [\varphi_s(\x)](t)$. Thus, 
    \begin{align}
        \label{eq.deterministicderivative}
    \E_t\left[\frac{V(t+h) - V(t)}{h}\right] &= \frac{u(t+h, \varphi_t(t+h, \x)) - u(t, \varphi_t(t, \x))}{h}\\
    & = \frac{u(t+h, \varphi(h, \vt_t\x) - u(t, \x(t))}{h}
    \end{align}
    This shows that $V$ is $\E$-differentiable if and only if $u$ is, at every point, differentiable in the direction 
    prescribed by $\varphi$.

    Let us look at a slightly different situation, where we set $\tilde{V}(t,\x) \coloneqq u(t, \varphi(t, \x))$.
    It follows from point (iii) in the definition of an evolution map, that $\varphi_t(\varphi(\x)) = \varphi(\x)$, so that
    Equation \eqref{eq.deterministicderivative} becomes
    \[
    \E_t\left[\frac{\tilde{V}(t+h) - \tilde{V}(t)}{h}\right] = \frac{u(t+h, \varphi(t+h, \x)- u(t, \varphi(t, \x))}{h}.
    \]
    Thus, $\tilde{V}$ is $\E$-differentiable if and only if $u$ is differentiable along the orbits of $\varphi$.
    
    In particular, if $X = \R^d$ and the map $t \mapsto \varphi(t, \x)$ is differentiable for $t \geq 0$ 
    (as is typically the case in applications to delay equations), then for $u \in C^{1,1}([0,T] \times \R^d)$, 
    the function $\tilde{V}$ is $\E$-differentiable and satisfies
    \[
    \partial^\E_t \tilde{V}(t,\x) = (\partial_t u)(t, \varphi(\x, t)) + (\nabla_x u)(t, \varphi(\x, t)) \cdot \varphi'(\x, t).
    \]
\end{example}

We now return to the general situation. Note that it follows directly from the definition that every $\E$-martingale $M\colon [0,T] \to C_b(\cX)$
is $\E$-differentiable with $\partial_t^\E M \equiv 0$. The following theorem gives an interpretation of the $\E$-derivative in the general case.

\begin{thm}\label{t:Edifferentiable}
Let $V\colon [0,T] \to C_b(\cX)$ be measurable and adapted. Then, the following are equivalent:
 \begin{enumerate}[\upshape (i)]
        \item $V$ is continuously $\E$-differentiable with $\partial_t^\E V\in \mathfrak{L}^1(0,T;C_b(\cX))$.
        \item There is an adapted $\Psi\in\mathfrak{L}^1(0,T;C_b(\cX))\cap C_{b,\mathrm{loc}}([0,T)\times \cX)$ such that the process 
        $M(t):=V(t)-\int_0^t \Psi(s)\,ds$ is an $\E$-martingale.
 \end{enumerate}  
 In this case, it holds $\Psi = \partial_t^\E V$ and $U(t) := \Theta_{-t} V(t)$ is the unique continuous mild solution of the FVP
\begin{equation} \label{eq:FVP:A*}
\partial_t U(t) = -\A U(t) + \Theta_{-t} \Psi(t) \quad \text{with} \quad U(T) = \Theta_{-T} V(T).
\end{equation}

\end{thm}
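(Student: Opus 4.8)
The plan is to prove the two implications separately and then read off the remaining assertions (the identity $\Psi=\partial_t^\E V$ and the FVP) from Theorem~\ref{t.continuousmartingale}. For the implication (ii)$\Rightarrow$(i), assume $M(t)=V(t)-\int_0^t\Psi(s)\,ds$ is an $\E$-martingale. Fixing $t\in[0,T)$ and $h>0$, I would write $V(t+h)-V(t)=[M(t+h)-M(t)]+\int_t^{t+h}\Psi(s)\,ds$ and apply $\E_t$. The martingale property $\E_tM(t+h)=M(t)$ together with $\E_tM(t)=M(t)$ (Lemma~\ref{l.expectation}(a)) kills the first bracket, and since $\E_t$ is a kernel operator (Lemma~\ref{l.kernelop}) Fubini gives $\E_t\int_t^{t+h}\Psi(s)\,ds=\int_t^{t+h}\E_t\Psi(s)\,ds$, so
\[
\E_t\Big[\frac{V(t+h)-V(t)}{h}\Big]=\frac1h\int_t^{t+h}\E_t\Psi(s)\,ds.
\]
As $\Psi\in C_{b,\mathrm{loc}}([0,T)\times\cX)$, the map $s\mapsto\E_t\Psi(s)$ is pointwise continuous and locally bounded with $\E_t\Psi(t)=\Psi(t)$, so the average converges to $\Psi(t)$ in the bp-sense. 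Hence $V$ is continuously $\E$-differentiable with $\partial_t^\E V=\Psi$.

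For (i)$\Rightarrow$(ii), set $\Psi:=\partial_t^\E V$; it is adapted, being a bp-limit of the $\cF_t$-measurable functions $\E_t[h^{-1}(V(t+h)-V(t))]$, and lies in $C_{b,\mathrm{loc}}\cap\mathfrak{L}^1(0,T;C_b(\cX))$ by hypothesis. The identity to establish is
\[
\E_sV(t)=V(s)+\int_s^t\E_s\Psi(r)\,dr\qquad(0\le s\le t\le T),
\]
which, via Fubini and $\E_sV(s)=V(s)$, is equivalent to $\E_sM(t)=M(s)$, i.e.\ to $M$ being an $\E$-martingale. Fixing $s$ and $\x$, I would differentiate $g(t):=[\E_sV(t)](\x)$ from the right: by the tower property $\E_s=\E_s\E_t$ (Lemma~\ref{l.expectation}(c)) and bp-continuity of $\E_s$,
\[
\frac{g(t+h)-g(t)}{h}=\Big[\E_s\E_t\Big(\tfrac{V(t+h)-V(t)}{h}\Big)\Big](\x)\xrightarrow[h\downarrow0]{}[\E_s\Psi(t)](\x),
\]
with $t\mapsto[\E_s\Psi(t)](\x)$ continuous. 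It then remains to integrate this right-derivative up to $t=T$.

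The main obstacle is precisely this last step. A function that is merely right-differentiable with continuous right-derivative need not be absolutely continuous—a left jump is compatible with a vanishing right-derivative—so the fundamental theorem of calculus cannot be invoked naively. I would resolve this by upgrading the pointwise $\E$-differentiability to local uniform control of the quotients $\E_\tau[h^{-1}(V(\tau+h)-V(\tau))]$ for $\tau$ in compact subintervals of $[0,T)$, using the regularity $\partial_t^\E V\in C_{b,\mathrm{loc}}$; this converts the computation into a convergent Riemann-sum argument for $\E_s[V(t)-V(s)]=\sum_k\E_s\E_{t_k}[V(t_{k+1})-V(t_k)]$ over refining partitions, which yields the integral identity. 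A structurally cleaner alternative is a reduction to uniqueness: let $\tilde U$ be the unique continuous mild solution of $\partial_t\tilde U=-\A\tilde U+\Th_{-\cdot}\Psi$ with $\tilde U(T)=\Th_{-T}V(T)$ (Theorem~\ref{t.cb.fvp}(b)) and set $\tilde V(t):=\Th_t\tilde U(t)$; by Theorem~\ref{t.compensated} the process $\tilde V-\int_0^\cdot\Psi$ is an $\E$-martingale, whence the already-proved direction (ii)$\Rightarrow$(i) gives $\partial_t^\E\tilde V=\Psi$. Then $W:=V-\tilde V$ is continuously $\E$-differentiable with $\partial_t^\E W\equiv0$ and $W(T)=0$, reducing everything to the assertion that such a $W$ vanishes identically—the same FTC-type obstacle in its simplest form.

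Finally, once $M$ is known to be an $\E$-martingale, Theorem~\ref{t.compensated} shows that $U=\Th_{-\cdot}V$ satisfies the variation-of-constants formula, and Theorem~\ref{t.continuousmartingale}(a)—applicable since $F=\Th_{-T}V(T)\in C_b(\cX)$ and $\Phi(t)=\Th_{-t}\Psi(t)\in C_b(\cX)$—identifies $U$ as the unique continuous mild solution of the FVP~\eqref{eq:FVP:A*}, which completes the proof and records the claimed identity $\Psi=\partial_t^\E V$.
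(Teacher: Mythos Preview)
Your approach matches the paper's almost exactly. For (ii)$\Rightarrow$(i) the paper argues just as you do: the martingale property kills the $M$-increment, leaving $\E_t$ of the average of $\Psi$, which converges to $\Psi(t)$. For (i)$\Rightarrow$(ii) the paper sets $V_s(r):=\E_s V(r)$, uses the tower property $\E_s=\E_s\E_r$ and dominated convergence to show that $r\mapsto V_s(r)$ is right-differentiable with continuous right-derivative $V'_s(r)=\E_s\,\partial_t^\E V(r)$, and then simply writes
\[
V_s(t)-V_s(s)=\int_s^t V'_s(r)\,dr,
\]
which is precisely the martingale identity. In other words, the FTC step you single out as ``the main obstacle'' is invoked directly and without further justification; the paper does not address the left-jump concern you raise (and your counter-observation is correct: a right-differentiable function with continuous right-derivative need not be continuous). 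So on this point you are being more scrupulous than the paper, not less. That said, your two proposed workarounds remain sketches: the Riemann-sum route needs the local \emph{uniform} control of the quotients that you mention but do not derive from the hypotheses, and the uniqueness reduction is, as you yourself note, circular. The concluding paragraph, reading off the FVP from Theorems~\ref{t.compensated} and~\ref{t.continuousmartingale}, is exactly how the paper finishes as well.
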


\begin{proof}
(i) $\Rightarrow$ (ii). Fix $0 \le s \le t \le T$ and define $V_s(r):=\E_s V(r)$ for all $s\le r< t$. Since 
\[
\frac{V_s(r+h)-V_s(r)}{h}=\E_s\bigg[\E_r\Big[\frac{V(r+h)-V(r)}{h}\Big]\bigg],
\]
the dominated convergence theorem implies that $r\mapsto V_s(r)$ is right-differentiable with right-derivative $V'_s(r)=\E_s \partial^\E_tV(r)$ and $r\mapsto V'_s(r)$ is pointwise continuous. Hence, we obtain that 
\[
\E_s[V(t)-V(s)]=V_s(t)-V_s(s)=\int_s^t V'_s(r)\,dr=\E_s\Big[\int_s^t \partial^\E_tV(r)\,dr \Big].
\]
This shows that $V(t) - \int_0^t \partial_t^{\mathbb{E}} V(s)\, ds$ is an $\E$-martingale. Hence, $\Psi:=\partial^\E_t V$ is as desired.

(ii) $\Rightarrow$ (i). Assume that $V(t) - \int_0^t \Psi(s)\, ds$ is an $\E$-martingale
for some $\Psi \in \mathfrak{L}^1(0,T; C_b(\cX))\cap C_{b,\mathrm{loc}}([0,T)\times \cX)$.
Then, for all $t \in [0,T)$, we have
\[
\lim_{h \downarrow 0} \mathbb{E}_t\left[ \frac{V(t+h) - V(t)}{h} - \frac{1}{h} \int_t^{t+h} \Psi(r)\, dr \right] = 0.
\]
Thus, $V$ is continuously $\mathbb{E}$-differentiable with $\partial_t^{\mathbb{E}} V = \Psi$.\smallskip

Now assume that the equivalent conditions (i) and (ii) are satisfied. It follows from Theorem \ref{t.compensated}, 
that 
\[
U(t) = \S(T-t)U(T) - \int_t^T\S(r-t)\Th_{-r}\Psi(r)\, dr.
\]
As $U(T)\in C_b(\cX)$ and $\S$ is a $C_b$-semigroup, the first term is jointly continuous. By Lemma \ref{l.measurable}(c), 
also the convolution term is continuous. Thus, $U$ is continuous, hence a continuous martingale. By Theorem \ref{t.continuousmartingale},
$U$ solves Equation \eqref{eq:FVP:A*}.
\end{proof}

\begin{cor}\label{cor:quadraticvariation}
Let $\Psi\in\mathfrak{L}^1(0,T;C_b(\cX))\cap C_{b,\mathrm{loc}}([0,T)\times \cX)$ be adapted.
If $M(t):=\int_0^t \Psi(s)\,ds$ is an $\E$-martingale, then $M\equiv 0$.
\end{cor}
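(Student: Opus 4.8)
The plan is to apply Theorem~\ref{t:Edifferentiable} to the process $V := M$ itself and to extract from it two different expressions for the $\E$-derivative of $M$, which together force $\Psi$ to vanish. First I would check that $V := M$ meets the standing hypotheses of Theorem~\ref{t:Edifferentiable}, i.e.\ that $V\colon [0,T]\to C_b(\cX)$ is measurable and adapted. Adaptedness of $V(t)=\int_0^t\Psi(s)\,ds$ is inherited from the adaptedness of $\Psi$, since each $\Psi(s)$ is $\cF_s$-measurable and hence $\cF_t$-measurable for $s\le t$. That $V(t)\in C_b(\cX)$ follows from the assumption $\Psi\in\mathfrak{L}^1(0,T;C_b(\cX))\cap C_{b,\mathrm{loc}}([0,T)\times\cX)$: on each slab $[0,T_0]\times\cX$ with $T_0<T$ the integrand is bounded and jointly continuous, so integration in $s$ preserves boundedness and continuity, exactly as in the proof of Lemma~\ref{l.measurable}(c).

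With this choice, the compensated process appearing in Theorem~\ref{t:Edifferentiable}, namely $V(t)-\int_0^t\Psi(s)\,ds$, is identically zero and therefore a trivial $\E$-martingale. Hence condition~(ii) of that theorem is satisfied with the same $\Psi$, and condition~(i) gives that $V=M$ is continuously $\E$-differentiable with $\partial_t^\E M=\Psi$. On the other hand, by hypothesis $M$ is itself an $\E$-martingale, and, as observed immediately before Theorem~\ref{t:Edifferentiable}, every $\E$-martingale has vanishing $\E$-derivative, so $\partial_t^\E M\equiv 0$. Comparing the two computations yields $\Psi\equiv 0$, whence $M(t)=\int_0^t\Psi(s)\,ds\equiv 0$.

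I do not expect a serious obstacle: once $M$ is recognized as a process whose own compensator is $\Psi$ while simultaneously being a martingale, the conclusion is immediate. If one prefers to avoid invoking Theorem~\ref{t:Edifferentiable}, the identity $\partial_t^\E M=\Psi$ can be derived directly. For $t\in[0,T)$ one has $\E_t\big[h^{-1}(M(t+h)-M(t))\big]=\E_t\big[h^{-1}\int_t^{t+h}\Psi(s)\,ds\big]$; since $h^{-1}\int_t^{t+h}\Psi(s)\,ds\to\Psi(t)$ in the bp-sense as $h\downarrow 0$ (by continuity of $s\mapsto\Psi(s,\x)$ and local boundedness of $\Psi$), and $\E_t$ is bp-continuous with $\E_t\Psi(t)=\Psi(t)$ by adaptedness and Lemma~\ref{l.expectation}(a), the limit equals $\Psi(t)$. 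The only delicate point is this interchange of the limit $h\downarrow 0$ with the bp-continuous operator $\E_t$, and it is precisely here that the regularity of $\Psi$ and the bp-continuity built into the notion of an expectation operator are used.
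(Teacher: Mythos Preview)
Your proposal is correct and follows essentially the same approach as the paper: both arguments compute $\partial_t^{\E}M$ in two ways—zero from the martingale property, and $\Psi$ from the integral structure of $M$—and conclude $\Psi\equiv 0$. The paper obtains $\partial_t^{\E}M=\Psi$ by the direct limit computation you give as your alternative, whereas your primary route packages that same computation through Theorem~\ref{t:Edifferentiable} applied to $V=M$ with its trivial compensated process; the two are equivalent.
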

\begin{proof}
On the one hand, since $M$ is an $\E$-martingale, it follows that $\partial_t^\E M \equiv 0$. On the other hand,
\[
\partial_t^\E M(t) = \lim_{h \downarrow 0} \E_t \bigg[ \frac{1}{h} \int_t^{t+h} \Psi(s)\, ds \bigg] = \Psi(t) \quad \text{for all } t \in [0,T).
\]
Hence, $\Psi \equiv 0$, and therefore $M \equiv 0$.
\end{proof}

\begin{example}
Let $G(\x):=\int_0^T \chi(s) f(\x(s))\, ds$ for some $\chi\in C_b([0,T])$ and $f\in C_b(X)$. Then, 
\[
M(t):=\E_t G=\int_0^t \chi(s) f(\x(s))\,ds + \E_t \int_t^T \chi(s) f(\x(s))\,ds
\]
is an $\E$-martingale. Hence, by Theorem~\ref{t:Edifferentiable}, $V(t):=\E_t \int_t^T \chi(s) f(\x(s))\,ds$ is continuously $\E$-differentiable with $\partial_t^\E V(t):=-\chi(t)f(\x(t))$. We emphasize that the $\E$-derivative is independent of the expectation operator.
\end{example}

\begin{example}[It\^o's lemma]\label{ex:Ito simple}
Suppose that $\mathbb{W}$ is the expectation operator corresponding to the Brownian motion from Example \ref{ex.wieneroperator}. 
Let $V(t,\x):=u(t,\x(t))$ for some $u\in C^{1,2}_b([0,T]\times \R)$ and 
\[
\Psi(t,\x):=\partial_t u(t,\x(t))+\frac{1}{2}\partial^2_x u(t,\x(t)).
\]
Then, $\Psi\in C_b([0,T]\times\cX)$ 
and thus satisfies the assumptions of Theorem~\ref{t:Edifferentiable}(ii). A direct application of It\^o's lemma shows that 
\[
M(t):=V(t)-\int_0^t\Psi(s)\,ds\quad\mbox{is a $\mathbb{W}$-martingale}.
\]
Hence, we can apply Theorem~\ref{t:Edifferentiable} and obtain that $V$ is continuously $\mathbb{W}$-differentiable with 
\[
\partial_t^\mathbb{W} V(t,\x)=\partial_t u(t,\x(t))+\frac{1}{2}\partial^2_x u(t,\x(t)).
\]
Moreover, as a result of Corollary~\ref{cor:quadraticvariation},
$V$ is a continuous $\mathbb{W}$-martingale if and only if $\Psi\equiv 0$, in which case $\partial_t u=-\frac{1}{2}\partial^2_x u$. 
\end{example}

A more general version of the previous example, based on the functional It\^o formula, is discussed in Subsection~\ref{sec:ver Dupire} below.

\begin{example}[It\^o's isometry]
Let $M\colon [0,T] \to C_b(X)$ be a continuous $\mathbb{E}$-martingale with absolutely continuous quadratic variation
\[
\langle M \rangle_t = \int_0^t \Psi^2(s)\, ds,
\]
for an adapted  function $\Psi \in C_{b, \mathrm{loc}}([0,T)\times \cX)$ with $\Psi^2 \in \mathfrak{L}^1(0,T; C_b(\cX))$. That is, we assume that the process $M^2 - \langle M \rangle$ is an $\mathbb{E}$-martingale. It follows from Theorem~\ref{t:Edifferentiable} that
\[
\partial_t^{\mathbb{E}} M^2 \equiv \Psi^2.
\]
Let $\mathscr{S}$ denote the space of simple processes $H\colon [0,T] \to C_b(\cX)$ of the form
\[
H = \sum_{k=0}^{n-1} H_k \one_{(t_k, t_{k+1}]},
\]
for some partition $0 = t_0 < t_1 < \cdots < t_n = T$ and $H_k \in C_b(\cX; \cF_{t_k})$. As usual, for such $H \in \mathscr{S}$, the corresponding stochastic integral is defined by
\[
(H\cdot M)(t) := \sum_{k=0}^{n-1} H_k \big(M(t_{k+1} \wedge t) - M(t_k \wedge t)\big).
\]
By the definition of the $\mathbb{E}$-derivative, we obtain
\[
\partial_t^{\mathbb{E}} (H\cdot M)^2(t) = H^2(t)\, \partial_t^{\mathbb{E}} M^2(t) = H^2(t)\, \Psi^2(t) \quad \text{for all } t \in [0,T).
\]
As a consequence of Theorem~\ref{t:Edifferentiable}, we conclude that the process
\begin{equation}\label{mart:HM2}
(H\cdot M)^2(t) - \int_0^t H^2(s) \Psi^2(s)\, ds
\end{equation}
is an $\mathbb{E}$-martingale.
In particular, we obtain the It\^o isometry
\[
\mathbb{E}\big[(H\cdot M)^2(T)\big] = \mathbb{E} \bigg[ \int_0^T H^2(s) \Psi^2(s)\, ds \bigg].
\]
This identity allows the standard extension of the stochastic integral to the closure of $\mathscr{S}$ with respect to the norm $\|H\| := \big( \mathbb{E} \big[ \int_0^T H^2(s) \Psi^2(s)\, ds \big] \big)^{1/2}$.
\end{example}

\subsection{Dupire time derivative}
In this subsection, we discuss the relationship between the $\mathbb{E}$-derivative and the Dupire horizontal derivative as introduced in \cite{dupire}. 

\begin{defn}
Let $V\colon [0,T] \to C_b(\cX)$ be measurable and adapted. We say that $V$ is \emph{differentiable in the Dupire sense} if, for every $t \in [0,T)$, the limit
\begin{equation}\label{eq:DUP}
[\dt V(t)](\x) \coloneqq \lim_{h \downarrow 0} \frac{V(t+h, \tau_t(\x)) - V(t, \x)}{h}
\end{equation}
exists in the bp-sense. The function $\dt V$ is called the \emph{Dupire time derivative} or the \emph{horizontal derivative} of $V$.
If, in addition, $\dt V$ belongs to $C_{b,\mathrm{loc}}([0,T) \times \cX)$, we say that $V$ is continuously differentiable in the Dupire sense.
\end{defn}
For the horizontal derivative in \cite{dupire}, it is only required that the difference quotient in \eqref{eq:DUP} exists pointwise. For our purposes, we work with the slightly stronger notion of bp-convergence, which also requires that the difference quotient is uniformly bounded.

It turns out that the Dupire time derivative is the $\E$-derivative for a particular expectation operator,
namely the \emph{stopping operator} $\mathbb{\Lambda}\colon B_b(\cX)\to B_b(\cX)$, defined by
\[
(\mathbb{\Lambda} F)(\x) := F(\tau(\x)).
\]
We  also set $\mathbb{\Lambda}_t := \Theta_t \mathbb{\Lambda} \Theta_{-t}$ and observe that $\mathbb{\Lambda}_t F = F \circ \tau_t$.
We note that this is a special case of the situation in Example \ref{ex.detmart} with $\varphi=\tau$. In particular, we have
\[
\dt = \partial_t^\mathbb{\Lambda}.
\]
We collect some properties for later use.

\begin{lem}\label{l.dupireshift}
The stopping operator $\mathbb{\Lambda}$ defines a homogeneous expectation operator. Moreover, the family $(\mathbb{\Lambda} \Theta_t)_{t \ge 0}$ forms an evolutionary $C_b$-semigroup; its generator is denoted by $\mathbb{L}$.

The $C_b$-semigroup $(\mathbb{\Lambda} \Theta_t)_{t \ge 0}$ is Markovian and its induced $C_b$-semigroup on $C_b(X)$ is the identity with $C_b$-generator $L f = 0$ for all $f \in D(L) = C_b(X)$.
\end{lem}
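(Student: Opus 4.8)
The plan is to treat the four assertions in order, the organizing principle being that stopping destroys all future information, so that shifting a stopped path forward and reading off its present value always returns $\x(0)$; this makes every computation collapse.

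First I would show that $\mathbb{\Lambda}$ is a homogeneous expectation operator. Since $[\mathbb{\Lambda}F](\x)=F(\tau(\x))$ is composition with the measurable map $\tau$, it is a Markovian kernel operator with kernel $\x\mapsto\delta_{\tau(\x)}$, hence bp-continuous by Lemma \ref{l.kernelop}, so $\mathbb{\Lambda}\in\cLbp(B_b(\cX))$. The two defining properties follow from the idempotency $\tau\circ\tau=\tau$ (immediate from $(t\wedge 0)\wedge 0=t\wedge 0$): property (i) holds because $(\mathbb{\Lambda}F)\circ\tau=F\circ\tau\circ\tau=F\circ\tau=\mathbb{\Lambda}F$ shows $\mathbb{\Lambda}F$ is $\cF_0$-measurable, and property (ii) holds because $F=F\circ\tau$ for $\cF_0$-measurable $F$ gives $\mathbb{\Lambda}F=F$. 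For homogeneity I would establish $\tau_t\circ\tau=\tau$ for all $t\ge 0$, which yields $\mathbb{\Lambda}\mathbb{\Lambda}_tF=(F\circ\tau_t)\circ\tau=F\circ\tau=\mathbb{\Lambda}F$; after unfolding $\tau_t=\vt_{-t}\circ\tau\circ\vt_t$ this reduces to the elementary identity $(t+((s-t)\wedge 0))\wedge 0=s\wedge 0$, valid for $t\ge 0$, verified by distinguishing the cases $s\ge t$, $0\le s<t$, and $s<0$. Alternatively, this is exactly the statement that $\tau$ is an evolution map in the sense of Example \ref{ex.detmart}, so one may instead invoke \cite[Proposition 6.2]{dkk}.

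Next, by Theorem \ref{t.evolutionary}(a) the family $(\mathbb{\Lambda}\Theta_t)_{t\ge 0}$ is automatically an evolutionary semigroup, so only the $C_b$-property remains. For $F\in C_b(\cX)$ one has $[\mathbb{\Lambda}\Theta_tF](\x)=[\Theta_tF](\tau(\x))$, which is the composition of the jointly continuous map $(t,\y)\mapsto[\Theta_tF](\y)$ — joint continuity being precisely the $C_b$-group property of the shift recalled from \cite[Section 3]{dkk} — with the continuous map $\x\mapsto\tau(\x)$ in the spatial variable. Hence $(t,\x)\mapsto[\mathbb{\Lambda}\Theta_tF](\x)$ is jointly continuous, so $(\mathbb{\Lambda}\Theta_t)_{t\ge 0}$ is an evolutionary $C_b$-semigroup.

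Finally I would compute the action on functions $F_0(f)$, where everything collapses: for $t\ge 0$,
\[
[\mathbb{\Lambda}\Theta_t F_0(f)](\x)=[F_0(f)](\vt_t\tau(\x))=f\big([\tau(\x)](t)\big)=f(\x(t\wedge 0))=f(\x(0)),
\]
so $\mathbb{\Lambda}\Theta_t F_0(f)=F_0(f)$ for every $t\ge 0$. In particular $\mathbb{\Lambda}\Theta_tF_0(f)$ is $\cF(\{0\})$-measurable, which is exactly Markovianity, and the induced semigroup $S$ on $C_b(X)$, characterized by $\mathbb{\Lambda}\Theta_tF_0(f)=F_0(S(t)f)$, satisfies $S(t)f=f$, i.e.\ $S=\mathrm{id}$. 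Reading off its $C_b$-generator from \eqref{eq.cbgen}, the relation $0=[S(t)f](x)-f(x)=\int_0^t[S(s)g](x)\,ds=t\,g(x)$ forces $g=0$, so every $f\in C_b(X)$ lies in $D(L)$ with $Lf=0$. I expect no step to be a serious obstacle; the only point requiring care is the bookkeeping with the $\wedge$-operations in establishing $\tau_t\circ\tau=\tau$ (equivalently the evolution-map axiom for $\tau$), and ensuring that the joint continuity used for the $C_b$-property is genuinely furnished by the $C_b$-group property of the shift rather than tacitly assumed.
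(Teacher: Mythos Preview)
Your proposal is correct and follows essentially the same route as the paper: verify that $\tau$ is an evolution map (the paper checks $\tau^2=\tau$ and $\tau(\vt_t\tau(\x))=\vt_t\tau(\x)$ and cites \cite[Proposition 6.2]{dkk}, which you offer as an alternative to your direct verification of homogeneity), note joint continuity for the $C_b$-property, and compute $\mathbb{\Lambda}\Theta_t F_0(f)=F_0(f)$ to identify the induced semigroup. Your write-up is simply more detailed—spelling out the $\wedge$-bookkeeping for $\tau_t\circ\tau=\tau$ and the generator computation—where the paper is content to cite or leave these as immediate.
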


\begin{proof}
Since $\tau(\x) = \tau^2(\x)$ and $\tau(\vt_t \tau(\x)) = \vt_t \tau(\x)$ for all $t \ge 0$, the map $\tau$ is an evolution map in the sense of \cite[Definition 6.1]{dkk}. By \cite[Proposition 6.2]{dkk}, $\mathbb{\Lambda}$ is a homogeneous expectation operator. Hence, $(\mathbb{\Lambda} \Theta_t)_{t \ge 0}$ is an evolutionary semigroup on $B_b(\cX; \mathcal{F}_0)$. It is even a $C_b$-semigroup, since the map $(t, \x) \mapsto [\mathbb{\Lambda} \Theta_t F](\x)$ is continuous for all $F \in C_b(\cX; \mathcal{F}_0)$. 

For the last part, note that for every $f \in C_b(X)$, we have
$[\mathbb{\Lambda} \Theta_t F_0(f)](\x) = [\mathbb{\Lambda} F_t(f)](\x) = f(\x(0))$,
which shows that the induced Markovian $C_b$-semigroup on $C_b(X)$ is the identity.
\end{proof}

We note that $\tau(\x)$ is constant for $t\geq 0$ and thus, in particular, differentiable. Using this, we obtain from Example \ref{ex.detmart}:

\begin{lem}
    \label{l.dupirediff}
    Let $u \in C_b([0,T]\times X)$ and define $V(t,\x) \coloneqq u(t, \x(t))$. Then $V$ is continuously Dupire differentiable if and only if
    $u\in C^{1,0}_b([0,T]\times X)$, i.e.\ for every $x\in X$, the map $t\mapsto u(t,x)$ is differentiable. In this case
    $\dt V(t,\x) = (\partial_t u)(t, \x(t))$. 
    In particular, $V$ is a $\mathbb{\Lambda}$-martingale if and only if $\partial_tu\equiv 0$, i.e., $t\mapsto u(t, x)$ is constant
    for every $x\in X$.
\end{lem}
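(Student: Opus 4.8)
The plan is to specialize Example~\ref{ex.detmart} to the evolution map $\varphi = \tau$ and to read off the regularity statement from the resulting difference quotient. Since $\dt = \partial_t^{\mathbb{\Lambda}}$, I extract the $\mathbb{\Lambda}$-difference quotient from \eqref{eq.deterministicderivative}. For $h \ge 0$ one computes $\varphi(h, \vt_t\x) = [\tau(\vt_t\x)](h) = [\vt_t\x](h \wedge 0) = [\vt_t\x](0) = \x(t)$, using that $\tau(\y)$ is constant on $[0,\infty)$ with value $\y(0)$. Hence \eqref{eq.deterministicderivative}, with $\E$ replaced by $\mathbb{\Lambda}$, reduces to
\[
\mathbb{\Lambda}_t\Big[\tfrac{V(t+h)-V(t)}{h}\Big](\x) = \frac{u(t+h, \x(t)) - u(t, \x(t))}{h},
\]
so the Dupire difference quotient at $(t,\x)$ is exactly the time-difference quotient of $u$ with its spatial argument frozen at $\x(t)$. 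This already makes the asserted formula $\dt V(t,\x) = (\partial_t u)(t,\x(t))$ plausible.

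For the implication $u \in C^{1,0}_b \Rightarrow V$ continuously Dupire differentiable, I would argue as follows. If $t \mapsto u(t,x)$ is differentiable with $\partial_t u$ continuous and bounded, then the quotient above converges pointwise to $(\partial_t u)(t,\x(t))$; the mean value theorem in the time variable writes each quotient as $(\partial_t u)(\xi,\x(t))$ for some $\xi \in (t,t+h)$, so it is bounded by $\|\partial_t u\|_\infty$ and the convergence is in the bp-sense. Since $(t,\x) \mapsto \x(t)$ is continuous on $[0,T)\times\cX$, the limit $(t,\x)\mapsto (\partial_t u)(t,\x(t))$ lies in $C_{b,\mathrm{loc}}([0,T)\times\cX)$; hence $V$ is continuously Dupire differentiable with $\dt V(t,\x)=(\partial_t u)(t,\x(t))$.

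For the converse, assume $V$ is continuously Dupire differentiable. Evaluating the quotient along the constant path $\x\equiv x$ shows that for every $x\in X$ and $t\in[0,T)$ the right-hand time-derivative $g(t,x):=\partial_t^+ u(t,x)$ exists, and $\dt V(t,\x)=g(t,\x(t))$. As constant paths embed continuously into $\cX$ and $\dt V \in C_{b,\mathrm{loc}}$, the map $(t,x)\mapsto g(t,x)$ is continuous. The one genuinely nonroutine step is then the classical real-analysis upgrade: a function $t\mapsto u(t,x)$ that is continuous on $[0,T]$ and has a continuous right-derivative on $[0,T)$ is in fact $C^1$ there, with ordinary derivative $g(\cdot,x)$. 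I would prove this by comparing $u(\cdot,x)$ with $t\mapsto u(0,x)+\int_0^t g(s,x)\,ds$: the difference is continuous with identically vanishing right-derivative, hence constant. This yields $u\in C^{1,0}_b$ with $\partial_t u = g$ (inheriting the $C_{b,\mathrm{loc}}$ regularity of $\dt V$), completing the equivalence. I expect this right-derivative-to-$C^1$ passage to be the main obstacle, since the Dupire derivative is defined only as a one-sided limit.

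Finally, for the $\mathbb{\Lambda}$-martingale claim, the identity $\dt V(t,\x)=(\partial_t u)(t,\x(t))$ shows $\dt V\equiv 0$ if and only if $\partial_t u\equiv 0$. If $\partial_t u\equiv 0$, then $u(t,x)=u_0(x)$ is time-independent and, using $[\tau_s(\x)](t)=\x(t\wedge s)=\x(s)$ for $s\le t$,
\[
[\mathbb{\Lambda}_s V(t)](\x) = u\big(t,[\tau_s(\x)](t)\big) = u_0(\x(s)) = V(s,\x),
\]
so $V$ is a $\mathbb{\Lambda}$-martingale directly from the definition. Conversely, since every $\E$-martingale is $\E$-differentiable with vanishing $\E$-derivative (the remark preceding Theorem~\ref{t:Edifferentiable}), a $\mathbb{\Lambda}$-martingale $V$ is continuously Dupire differentiable with $\dt V\equiv 0$; the main equivalence then forces $\partial_t u\equiv 0$, i.e.\ $t\mapsto u(t,x)$ is constant for every $x\in X$.
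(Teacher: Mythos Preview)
Your proposal is correct and follows essentially the same line as the paper's proof: specialize Example~\ref{ex.detmart} to $\varphi=\tau$, identify the Dupire difference quotient with $h^{-1}(u(t+h,\x(t))-u(t,\x(t)))$, and use that $\x(t)$ ranges over all of $X$ together with the ``continuous right-derivative implies $C^1$'' fact. You spell out the latter via comparison with $\int_0^t g(s,x)\,ds$, which the paper merely cites as a known fact; otherwise the arguments coincide. For the addendum, the paper simply invokes Theorem~\ref{t:Edifferentiable} (applied with $\E=\mathbb{\Lambda}$ and $\Psi=0$), whereas you verify the forward direction by hand and deduce the converse from the remark that martingales have vanishing $\E$-derivative; both routes are equivalent and yours is slightly more self-contained.
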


\begin{proof}
    It suffices to observe that for $\varphi=\tau$, the right-hand side of Equation \eqref{eq.deterministicderivative}
    becomes $h^{-1}(u(t+h, \x(t)) - u(t, \x(t))$. As $\x(t)$ may take every value in $X$, it follows that $V$ is Dupire differentiable 
    if and only if $u$ is right-differentiable in $t$ at every point $(t,x)\in [0,T]\times X$. Noting that a right-differentiable function
    with continuous derivative is already differentiable implies the claim. The addendum follows from Theorem \ref{t:Edifferentiable}.
\end{proof}

As an application of Theorem~\ref{t:Edifferentiable}, we obtain the following result.
\begin{cor}\label{c:Edifferentiable2}
Let $V\colon [0,T] \to C_b(\cX)$ be measurable and adapted. Then, the following are equivalent:
 \begin{enumerate}[\upshape (i)]
        \item $V$ is continuously differentiable in the Dupire sense with $\dt V\in \mathfrak{L}^1(0,T;C_b(\cX))$.
        \item There is an adapted process $\Psi \in C_{b,\mathrm{loc}}([0,T) \times \cX) \cap \mathfrak{L}^1(0,T; C_b(\cX))$ such that $M(t) := V(t) - \int_0^t \Psi(s)\, ds$ is a $\mathbb{\Lambda}$-martingale.
    \end{enumerate}  
 In this case, it holds $\Psi = \dt V$ and $U(t) := \Theta_{-t} V(t)$ is the unique continuous mild solution of the FVP
\begin{equation} \label{eq:FVP:A}
\partial_t U(t) = -\mathbb{L} U(t) + \Theta_{-t} \dt V(t) \quad \text{with} \quad U(T) = \Theta_{-T} V(T).
\end{equation}
\end{cor}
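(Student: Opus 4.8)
The plan is to deduce this corollary directly from Theorem~\ref{t:Edifferentiable} by specializing the generic expectation operator $\E$ there to the stopping operator $\mathbb{\Lambda}$. What makes this specialization legitimate is Lemma~\ref{l.dupireshift}: it tells us that $\mathbb{\Lambda}$ is a homogeneous expectation operator and that the associated family $(\mathbb{\Lambda}\Theta_t)_{t\ge 0}$ is an evolutionary $C_b$-semigroup with generator $\mathbb{L}$. Hence all the structural hypotheses under which Theorem~\ref{t:Edifferentiable} was proved are met with $\E=\mathbb{\Lambda}$ and $\A=\mathbb{L}$, and I can invoke the theorem verbatim for this choice.

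The second ingredient I would use is the identity $\dt=\partial_t^{\mathbb{\Lambda}}$, already recorded in the discussion preceding the statement: since $\mathbb{\Lambda}$ is the deterministic evolution operator of Example~\ref{ex.detmart} for the stopping map $\tau$, the $\mathbb{\Lambda}$-derivative of an adapted process is exactly its Dupire time derivative. With this identification, condition~(i) of the corollary (continuous Dupire differentiability of $V$ together with $\dt V\in\mathfrak{L}^1(0,T;C_b(\cX))$) is literally condition~(i) of Theorem~\ref{t:Edifferentiable} read for $\E=\mathbb{\Lambda}$, and condition~(ii) of the corollary is literally condition~(ii) of the theorem, the $\E$-martingale property there becoming the $\mathbb{\Lambda}$-martingale property here.

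Once these two dictionaries are in place, the equivalence of (i) and (ii) is immediate, and the supplementary conclusions transfer automatically: Theorem~\ref{t:Edifferentiable} gives $\Psi=\partial_t^{\mathbb{\Lambda}}V=\dt V$ and identifies $U(t)=\Theta_{-t}V(t)$ as the unique continuous mild solution of the final value problem~\eqref{eq:FVP:A*} with $\A$ replaced by $\mathbb{L}$ and $\Psi$ replaced by $\dt V$, which is exactly~\eqref{eq:FVP:A}. I do not expect a genuine obstacle; the proof is a matter of matching notation. The only point deserving a sentence of care is to verify that the function spaces $C_{b,\mathrm{loc}}([0,T)\times\cX)$ and $\mathfrak{L}^1(0,T;C_b(\cX))$ together with the adaptedness and measurability requirements on $V$ and $\Psi$ line up on both sides, which they do by construction, since the hypotheses of the corollary were chosen to mirror those of Theorem~\ref{t:Edifferentiable}.
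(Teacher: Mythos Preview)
Your proposal is correct and matches the paper's approach exactly: the corollary is stated there as a direct application of Theorem~\ref{t:Edifferentiable}, obtained by specializing $\E$ to the stopping operator $\mathbb{\Lambda}$ via Lemma~\ref{l.dupireshift} and the identity $\dt=\partial_t^{\mathbb{\Lambda}}$. No separate proof is given in the paper precisely because the deduction is the notational transcription you describe.
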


For the remainder of this subsection, let $\S$ be an evolutionary $C_b$-semigroup with expectation operator $\E$ and $C_b$-generator $\A$. By combining the previous result with those in Section~\ref{sect.martingale}, we derive a differential equation directly for an
$\E$-martingales $M(t)$, rather than the shifted version $\Theta_{-t}M(t)$. To do so, we exchange the usual time derivative $\partial_t$
for the Dupire derivative $\dt$.

\begin{thm}
    \label{t.martDupire}
Let $T > 0$, and let $V \in C_b([0,T] \times \cX)$ and $\Psi \in \mathfrak{L}^1(0,T; C_b(\cX))$ be adapted such that the process
\[
M(t) := V(t) - \int_0^t \Psi(s)\, ds
\]
is an $\E$-martingale. Suppose that:
\begin{enumerate}
[\upshape (a)]
\item $V$ is continuously differentiable in the Dupire sense with $\dt V \in \mathfrak{L}^1(0,T; C_b(\cX))$,
\item the pointwise classical derivative of $U(t):=\Theta_{-t} V(t)$ 
exists on $[0,T)$ and belongs to $C_{b,\mathrm{loc}}([0,T)\times X)$.
\end{enumerate}
Then, for every $t \in [0,T)$, it holds  $U(t) \in D(\mathbb{L}) \cap D(\A)$ and
\begin{equation}\label{eq:Dupire2}
\dt V(t) = \Theta_t (\mathbb{L} - \A) \Theta_{-t} V(t) + \Psi(t).
\end{equation}
\end{thm}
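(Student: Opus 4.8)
The plan is to recognize $U(t):=\Theta_{-t}V(t)$ as the solution of \emph{two} final value problems---one governed by $\A$, coming from the $\E$-martingale property, and one governed by $\mathbb{L}$, coming from Dupire differentiability---to upgrade both to strong solutions, and then to subtract. First I would record the two mild-solution representations. Since $M$ is an $\E$-martingale and $V\in C_b([0,T]\times\cX)$, both $F:=\Theta_{-T}V(T)$ and $\Phi(t):=\Theta_{-t}\Psi(t)$ lie in $C_b(\cX)$, so Theorem~\ref{t.compensated} together with Theorem~\ref{t.continuousmartingale} shows that $U$ is the unique continuous mild solution of $\partial_t U(t)=-\A U(t)+\Phi(t)$ with $U(T)=F$. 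On the other hand, hypothesis~(a) together with Corollary~\ref{c:Edifferentiable2} shows that the same $U$ is the unique continuous mild solution of $\partial_t U(t)=-\mathbb{L}U(t)+\Theta_{-t}\dt V(t)$ with the same terminal value $F$, where $\mathbb{L}$ is the generator of the stopping semigroup from Lemma~\ref{l.dupireshift}.

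Next I would upgrade these to strong solutions using hypothesis~(b). For the $\mathbb{L}$-problem this is immediate: the inhomogeneity $\Theta_{-t}\dt V(t)$ lies in $C_{b,\mathrm{loc}}([0,T)\times\cX)\cap\mathfrak{L}^1(0,T;C_b(\cX))$ by~(a) (using joint continuity of the shift), while $\partial_t U$ exists on $[0,T)$ and belongs to $C_{b,\mathrm{loc}}$ by~(b). Hence Theorem~\ref{t.mildstrong}(b), applied to the stopping semigroup with generator $\mathbb{L}$, yields $U(t)\in D(\mathbb{L})$ and
\[
\partial_t U(t)=-\mathbb{L}U(t)+\Theta_{-t}\dt V(t)\qquad\text{for all }t\in[0,T).
\]
I would then like to run the identical argument for $\A$, obtaining $U(t)\in D(\A)$ and $\partial_t U(t)=-\A U(t)+\Phi(t)$.

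The main obstacle is precisely this last upgrade. Unlike $\Theta_{-t}\dt V$, the inhomogeneity $\Phi=\Theta_{-t}\Psi$ is only assumed to lie in $\mathfrak{L}^1(0,T;C_b(\cX))$ and need not be pointwise continuous in $t$, so Theorem~\ref{t.mildstrong}(b) does not apply off the shelf. To circumvent this I would compare the two equations: subtracting the two mild-solution identities for $\int_t^T U(s)\,ds$ gives $(\A-\mathbb{L})\int_t^T U(s)\,ds=\int_t^T\Theta_{-s}\bigl[\Psi(s)-\dt V(s)\bigr]\,ds$, and combining this with the already-established $C^1$-regularity of $U$ from~(b) and the continuity of $\Theta_{-s}\dt V$ should force $\Phi$ to agree with the continuous function $\partial_t U+\A U$, thereby giving $U(t)\in D(\A)$ for every $t$ with $\partial_t U(t)=-\A U(t)+\Phi(t)$. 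Carrying out this closedness/continuity step carefully---passing to difference quotients $\tfrac1h\int_t^{t+h}U(s)\,ds\to U(t)$ and invoking bp-closedness of $\Af$ from Lemma~\ref{l.fullgen}(d) to land in $D(\A)$---is the delicate analytic heart of the proof, and the point where the hypothesis that $U$ is genuinely $C^1$ (not merely a mild solution) is indispensable.

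Once both strong-solution identities are in hand, the conclusion is pure algebra. Subtracting $\partial_t U(t)=-\A U(t)+\Theta_{-t}\Psi(t)$ from $\partial_t U(t)=-\mathbb{L}U(t)+\Theta_{-t}\dt V(t)$ yields
\[
(\mathbb{L}-\A)U(t)=\Theta_{-t}\bigl[\dt V(t)-\Psi(t)\bigr].
\]
Applying $\Theta_t$ (recall that $(\Theta_t)_{t\in\R}$ is a group, so $\Theta_t\Theta_{-t}=\mathrm{id}$) and inserting $U(t)=\Theta_{-t}V(t)$ gives $\dt V(t)=\Theta_t(\mathbb{L}-\A)\Theta_{-t}V(t)+\Psi(t)$, which is exactly \eqref{eq:Dupire2}; the memberships $U(t)\in D(\mathbb{L})\cap D(\A)$ have been recorded along the way.
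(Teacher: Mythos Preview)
Your proposal is correct and follows essentially the same route as the paper: identify $U$ as a continuous mild solution of both the $\mathbb{L}$-FVP (via Corollary~\ref{c:Edifferentiable2}) and the $\A$-FVP (via Theorems~\ref{t.compensated} and~\ref{t.continuousmartingale}), upgrade the $\mathbb{L}$-equation to a strong solution using hypothesis~(b) and Theorem~\ref{t.mildstrong}(b), then subtract the two integrated identities, pass to difference quotients $\tfrac{1}{h}\int_t^{t+h}U(s)\,ds\to U(t)$, and invoke closedness to obtain $U(t)\in D(\A)$ and the pointwise relation~\eqref{eq:Dupire2}. Your explicit flagging of the obstacle---that $\Phi=\Theta_{-\cdot}\Psi$ need not lie in $C_{b,\mathrm{loc}}$, so Theorem~\ref{t.mildstrong}(b) cannot be applied directly to the $\A$-problem---and your proposed workaround via the difference-quotient/closedness argument are precisely what the paper does.
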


\begin{proof}
By assumption (a) and Corollary~\ref{c:Edifferentiable2}, $U$ is the unique continuous mild solution of the FVP~\eqref{eq:FVP:A}. By assumption (b) and Theorem~\ref{t.mildstrong}, $U$ is even the strong solution of \eqref{eq:FVP:A}, whence $U(t)\in D(\mathbb{L})$ for all $t\in [0,T)$.
Moreover, it follows from Theorem~\ref{t.continuousmartingale} that $U$ is the unique mild solution of
\begin{equation}\label{eq.6.12.1}
\partial_t U(t) = -\A U(t) + \Theta_{-t}\Psi(t).
\end{equation}
Subtracting Equation \eqref{eq:FVP:A} from Equation \eqref{eq.6.12.1} and integrating yields for $t\in [0,T)$  and $h>0$ small enough
\[
\frac{1}{h}\int_t^{t+h} \Theta_{-s}\left( \dt V(s) - \Psi(s)\right)\, ds =  \frac{1}{h} \int_t^{t+h} \mathbb{L} U(s)\, ds - \A\frac{1}{h} \int_t^{t+h}  U(s)\, ds.
\]
Using that $U$ is a strong solution of \eqref{eq:FVP:A} and the regularity of $\partial_t U$ and $\dt V$, we have $\mathbb{L} U \in C_{b,\mathrm{loc}}([0,T) \times \cX)$. Hence, we can let $h \downarrow 0$ and obtain from the closedness of the $C_b$-generator $\A$ that
\[
U(t) \in D(\mathbb{A}) \quad \text{and} \quad \Theta_{-t} \dt V(t)  = \mathbb{L} U(t) - \A U(t) + \Theta_{-t}\Psi(t)
\]
for all $t \in [0,T)$. Applying the shift $\Theta_t$ to both sides leads to equation~\eqref{eq:Dupire2}.
\end{proof}

\begin{example}
    We illustrate Theorem \ref{t.martDupire} in the setting of Example \ref{ex.integralmeanmarkovian}. Thus, given
    $G=F_0^T(f)$ for $f\in C_b(X)$, we set $M(t) \coloneqq \E_t G$. Using the decomposition $G= F_0^t(f) + F_t^T(f)$, it follows
    that $M$ is of the form \eqref{eq.6.12.1} with $V(t) = \E_t F_t^T(f)$ and $\Psi(t, \x) = f(\x(t))$. If we denote
    the restriction of $\S$ to $B_b(\cX, \cF(\{0\}))$ by $S$, it follows that
    \[
    U(t) \coloneqq \Th_{-t} V(t) = \E F_0^{T-t}(f) = \int_0^{T-t}F_0(S(s)f)\, ds = F_0\Big(\int_0^{T-t}S(s)f\, ds \Big).
    \]
    For the $C_b$-generator $A$ of $S$, we have $A\int_0^{T-t}S(s)f = S(T-t)f - f$ by Lemma \ref{l.cbgen}(c). 
    Consequently, \eqref{eq:DMarkov} yields
    \[
    [\A U(t)](\x) = \Big[F_0\Big(A\int_0^{T-t}S(s)f\, ds \Big)\Big](\x) = [S(T-t)f](\x(0)) - f(\x(0)).
    \]
    On the other hand, $\mathbb{L} U(t) = F_0(L \int_0^{T-t}S(s)f) = 0$ by Lemma \ref{l.dupireshift}. It thus follows that
    \[
    \big[\Theta_t (\mathbb{L}-\A) \Theta_{-t}V(t)\big](\x) + \Psi(t, \x) = [S(T-t)f](\x(t)).
    \]
    Noting that $V(t,\x) = \int_0^{T-t}(S(s)f)(\x(t))\, ds$, we find
    \[
    \frac{V(t+h, \tau_t(\x))-V(t,\x)}{h} = \frac{1}{h}\int_{T-t-h}^{T-t}S(s)f(\x(t))\, ds \to [S(T-t)f](\x(t))
    \]
    as $h\to 0$. This verifies Theorem \ref{t.martDupire} directly in this example.
\end{example}

As an application of the previous result, we obtain the following characterization and regularity result in the particular case when $V(t,\x)=u(t,\x(t))$. We highlight that we only assume differentiability in time for $u$, and that spatial regularity is automatically obtained via the martingale condition.

\begin{cor}
    \label{c.martDupire}
Let $T > 0$, and let $u\in C^{1,0}_b([0,T]\times X)$
and $\Psi \in \mathfrak{L}^1(0,T; C_b(\cX))$ be adapted. 
If the process $[0,T]\times \cX\to\R$ given by
\[
u(t,\x(t)) - \int_0^t \Psi(s,\x)\, ds
\]
is an $\E$-martingale, then the following properties hold:
\begin{enumerate}
[\upshape (a)]
\item $V(t,\x):=u(t,\x(t))$ is continuously differentiable in the Dupire sense with $\dt V(t,\x)=\partial_t u(t,\x(t)) \in  C_{b,\mathrm{loc}}([0,T) \times \cX) \cap \mathfrak{L}^1(0,T; C_b(\cX))$.
\item The pointwise classical derivative of $U(t):=\Theta_{-t} V(t)$ 
exists on $[0,T)$, is given by $\partial_t U(t)=\Theta_{-t}\dt V(t)$ and belongs
to $ C_{b,\mathrm{loc}}([0,T) \times \cX) \cap \mathfrak{L}^1(0,T; C_b(\cX))$.
\item For every $t\in[0,T)$, the map $\x\mapsto u(t,\x(0))$ belongs to the domain $D(\A)$. In particular, if $\S$ is Markovian with induced $C_b$-generator $A$, we obtain that $u(t,\cdot)\in D(A)$ for all $t\in[0,T)$.
\end{enumerate}
In this case, it holds
\begin{equation}\label{eq:Dupire3}
\partial_t u(t,\x(t)) = -\Theta_t \A F_0(u(t)) + \Psi(t)
\end{equation}
for all $t \in [0,T)$, where we recall that $F_0(u(t))$ is the map $\x \mapsto u(t, \x(0))$. In particular, if $\S$ is Markovian with induced $C_b$-generator $A$, the equation \eqref{eq:Dupire3} becomes $\partial_t u(t,\x(t)) = -A u(t,\x(t)) + \Psi(t,\x)$ for all $t\in[0,T)$.
\end{cor}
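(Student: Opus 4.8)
The plan is to derive this corollary as the specialization of Theorem~\ref{t.martDupire} to the process $V(t,\x) := u(t,\x(t))$, verifying the two hypotheses of that theorem and then simplifying the resulting identity by exploiting the explicit structure of the stopping semigroup. The decisive preliminary computation is that
\[
U(t) := \Theta_{-t}V(t) = F_0(u(t)),
\]
i.e.\ $U(t,\x) = u(t,\x(0))$, which follows from $[\Theta_{-t}V(t)](\x) = V(t,\vt_{-t}\x) = u(t,[\vt_{-t}\x](t)) = u(t,\x(0))$. This is the conceptual crux: although $V$ itself is generally not differentiable in $t$ (the path evaluation $\x(t)$ is merely continuous), the time-dependence of $U$ sits entirely in the first slot of $u$, so $U$ inherits classical differentiability from the assumption $u\in C^{1,0}_b([0,T]\times X)$.

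Building on this, I would first invoke Lemma~\ref{l.dupirediff}: since $u\in C^{1,0}_b$, the process $V$ is continuously Dupire differentiable with $\dt V(t,\x) = (\partial_t u)(t,\x(t))$, and boundedness and continuity of $\partial_t u$ place $\dt V$ in $C_b([0,T]\times\cX)$, hence in $C_{b,\mathrm{loc}}([0,T)\times\cX)\cap\mathfrak{L}^1(0,T;C_b(\cX))$; this is part (a). For part (b), the identity $U(t,\x)=u(t,\x(0))$ gives $\partial_t U(t,\x) = \partial_t u(t,\x(0))$ pointwise, and a short shift computation confirms $\partial_t U(t) = \Theta_{-t}\dt V(t)$, with the same regularity as $\dt V$. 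These facts simultaneously establish parts (a) and (b) of the corollary and verify hypotheses (a) and (b) of Theorem~\ref{t.martDupire}. Applying that theorem then yields $U(t)\in D(\mathbb{L})\cap D(\A)$ for all $t\in[0,T)$ and
\[
\dt V(t) = \Theta_t(\mathbb{L}-\A)\Theta_{-t}V(t) + \Psi(t).
\]
Since $U(t)=F_0(u(t))\in D(\A)$, part (c) is immediate, and in the Markovian case the relation~\eqref{eq:DMarkov} upgrades this to $u(t,\cdot)\in D(A)$.

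To reach the explicit equation~\eqref{eq:Dupire3}, I would eliminate the $\mathbb{L}$-term. By Lemma~\ref{l.dupireshift} the stopping semigroup $(\mathbb{\Lambda}\Theta_t)_{t\ge0}$ is Markovian with induced $C_b$-generator $L=0$ on $C_b(X)$, so the Markov identity $\mathbb{L}F_0(f)=F_0(Lf)=0$ gives $\mathbb{L}U(t)=\mathbb{L}F_0(u(t))=0$. Substituting $\mathbb{L}U(t)=0$ and $\Theta_{-t}V(t)=F_0(u(t))$ into the displayed identity and using $\dt V(t,\x)=\partial_t u(t,\x(t))$ produces
\[
\partial_t u(t,\x(t)) = -\Theta_t\A F_0(u(t)) + \Psi(t),
\]
which is~\eqref{eq:Dupire3}. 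In the Markovian case I would finally apply $\A F_0(u(t)) = F_0(Au(t))$ and unwind the shift via $[\Theta_t F_0(Au(t))](\x) = [Au(t)](\x(t))$ to recover $\partial_t u(t,\x(t)) = -Au(t,\x(t)) + \Psi(t,\x)$. I do not expect a genuinely hard step here, since Theorem~\ref{t.martDupire} carries the analytic weight; the only points demanding care are the bookkeeping of shift operators in the identifications $U(t)=F_0(u(t))$ and $\Theta_t F_0(Au(t))(\x)=[Au(t)](\x(t))$, and checking that the $C_{b,\mathrm{loc}}$ and $\mathfrak{L}^1$ membership required by Theorem~\ref{t.martDupire} genuinely transfers from $u\in C^{1,0}_b$ to $\dt V$ and $\partial_t U$.
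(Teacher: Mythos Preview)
Your proposal is correct and follows essentially the same route as the paper: establish part (a) via Lemma~\ref{l.dupirediff}, identify $U(t)=F_0(u(t))$, verify the hypotheses of Theorem~\ref{t.martDupire}, and simplify using $\mathbb{L}F_0(u(t))=0$ from Lemma~\ref{l.dupireshift}. The only minor difference is that the paper routes part (b) through Corollary~\ref{c:Edifferentiable2} and Theorem~\ref{t.mildstrong}(b) (mild solution of the $\mathbb{L}$-FVP, upgraded to strong, then read off $\partial_t U$ from the equation), whereas you compute $\partial_t U(t,\x)=\partial_t u(t,\x(0))$ directly from the explicit formula; your shortcut is legitimate and arguably cleaner.
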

\begin{proof}
First, it follows from Lemma \ref{l.dupirediff} that $V$ is continuously differentiable in the Dupire sense, with
\[
\dt V(t,\x)=\partial_t u(t,\x(t))\quad\text{for all }(t,\x)\in[0,T)\times \cX,
\]
and that $\partial_t u(t,\x(t))\in C_b([0,T]\times\cX)$. Consequently $\dt V\in  C_{b,\mathrm{loc}}([0,T) \times \cX) \cap \mathfrak{L}^1(0,T; C_b(\cX))$ and  we can apply Corollary~\ref{c:Edifferentiable2} to infer that $U$ is the continuous mild solution of the FVP \eqref{eq:FVP:A}. 
Actually, it follows from Theorem \ref{t.mildstrong}(b), that $U$ is a strong solution. Since
\[
U(t,\x)=u(t,\x(0))=F_0(u(t)),
\]
it follows from Lemma~\ref{l.dupireshift} that $\mathbb{L} U(t)=F_0(Lu(t))=0$. Hence, \eqref{eq:FVP:A} ensures that $U$ is pointwise classically differentiable with
\[
\partial_t U(t)=\Theta_{-t}\dt V(t)\quad\text{for all }t\in[0,T),
\]
and $\partial_t U\in  C_{b,\mathrm{loc}}([0,T) \times \cX) \cap \mathfrak{L}^1(0,T; C_b(\cX))$.

In a second step, we can apply Theorem~\ref{t.martDupire} and obtain that for every $t \in [0,T)$, it holds that $U(t) \in D(\A)$ and
\[
\partial_t u(t,\x(t)) =\dt V(t) =- \Theta_t \A \Theta_{-t} V(t) + \Psi(t) =- \Theta_t \A F_0(u(t)) + \Psi(t).
\]
Finally, if $\S$ is Markovian, we have $\Theta_{t}\A  F_0(u(t))=F_t(Au(t))$.
\end{proof}

\begin{example}\label{ex:Ito simple2}
Let $\mathbb{W}$ be the expectation operator corresponding to Brownian motion from Example~\ref{ex.wieneroperator}. Let $V(t,\x):=u(t,\x(t))$ for some $u \in C^{1,0}_b([0,T]\times \R^d)$. Suppose that $V$ is an $\E$-martingale. Then, it follows from Corollary~\ref{c.martDupire} that $u(t)\in D(\frac{1}{2}\Delta)$ for all $t\in[0,T)$, where $\frac{1}{2}\Delta$ is the $C_b$-generator of the Gaussian semigroup $G$ induced by the Markovian Gaussian semigroup $\mathbb{G}$. Moreover, $u$ solves the heat equation $\partial_t u(t)=-\frac{1}{2}\Delta u(t)$
for all $t \in [0,T)$.

In contrast to Example~\ref{ex:Ito simple}, where we relied strongly on It\^o's lemma, we do not a priori assume $C^2$-regularity in the second variable here.
\end{example}

\subsection{The vertical Dupire derivative}\label{sec:ver Dupire}
In this subsection, we discuss the connection of $\E$-martingales and the vertical Dupire derivative which is given by the It\^o formula for path-dependent functionals. We discuss this in the setting of  
It\^o diffusions, generalizing Example~\ref{ex:Ito simple}. 
As in Example~\ref{ex.wieneroperator}, let $B =(B_t)_{t\geq 0}$ be a $d$-dimensional Brownian motion defined on a probability space $(\Omega, \Sigma, P)$, and let $X= \R^d$ and $\cX= C(\R; \R^d)$. 
Assume $b\colon\R^d \to\R^d $  and $\sigma\colon \R^d\to\R^{d\times d}$ to be bounded and Lipschitz continuous functions, and assume that $\sigma(x)\sigma(x)^\top$ is
uniformly positive definite. Let $T>0$. Then, for every $x\in\R^d $ there 
exists a unique strong solution $(X_t^x)_{t\in [0,T]}$ of the SDE
\begin{equation}
    \label{eq-sde}
    \left\{
    \begin{aligned}
     dX_t^x &= b(X_t^x) dt + \sigma (X_t^x) dB_t,\\
     X_0^x & = x    
    \end{aligned}
    \right.
\end{equation}
which is adapted with respect to the filtration generated by $\{B_s: s\le t\}$  and which 
satisfies
\[ E \Big( \int_0^T |X_t^x|^2 \mathrm{d}t \Big)<\infty \]
(see
\cite[Theorem~5.2.1]{Oeksendal03}). Here, $E$ stands for the expectation with respect to $P$. The process $(X_t^x)_{t\ge 0}$ is Markovian and called the It\^o diffusion related to $b$ and $\sigma$.

Given $\x\in \cX$ and $\omega\in \Omega$, we define (in the same way as in Example~\ref{ex.wieneroperator})
    \[
    [\x\otimes_0 X^{\x(0)}(\omega)](t) \coloneqq \begin{cases}
        \x(t), & t\leq 0\\
        X_t^{\x(0)}(\omega) , & t>0. 
    \end{cases}
    \]
    We define the operator $\E \in \cL(B_b(\cX))$ by setting for $F\in B_b(\cX)$
    \begin{equation}
        \label{eq.sdeop}
        [\E F](\x) \coloneqq E\big[F(\x\otimes_0 X^{\x(0)})\big].
    \end{equation}
    Then $\E$ is a homogeneous expectation operator, see \cite[Example 6.11]{dkk}, and we denote its  associated evolutionary semigroup by $\S$. Considering $F=\one_A$ for a Borel set $A\in \mathscr{B}(\cX)$, we see that for every $\x\in \cX$, the probability measure $\P^\x$  is given by 
    \[ \P^\x(A) = P\big(X^{\x(0)}\in A^+(\x)\big)\]
    with $A^+(\x) := \{ \y^+: \y\in A,\, \y^-=\x^-\}$. Here, we have set $\y^+:= \y|_{[0,\infty)}$ and $\y^-:=\y|_{(-\infty,0]}$
    (see also Example~\ref{Ex:4.5}). Therefore, an adapted function $M\colon[0,T]\to C_b(\cX)$ is a $\P^\x$-martingale if and 
    only if $t\mapsto M(t,\x\otimes_0 X^{\x(0)})$ is a $P$-martingale.

To establish a connection between $\E$-martingales and the vertical Dupire derivative, we have to extend the path space $\cX=C(\R;X)$ to  the space $\mathscr{X}_{\mathrm{D}}\coloneqq D(\R, X)$   of all \emph{c\`adl\`ag paths} from $\R$ to $X$, endowed with Skorohod's $J_1$-metric. For the definition of the metric in $\mathscr{X}_{\mathrm{D}}$, we refer to \cite[Appendix B.2]{dkk}.

\begin{defn}
Let $V: [0,T] \to C_b(\mathscr{X}_{\mathrm{D}})$ be adapted. We say that $V$ is \emph{vertically differentiable in the Dupire sense}
if for every $t\in [0,T]$, every $v\in X=\R^d$,  and every $\x\in\mathscr{X}_{\mathrm{D}}$ the limit
\[
\partial_{\x}^v V(t) \coloneqq \lim_{h\downarrow 0} \frac{V(t, \tau^t(\x)+ h v\one_{[t,\infty)}) - V(t,\x)}{h}
\]
exists. In this case, the function $\partial_{\x}^v V$ is called the \emph{vertical Dupire  derivative} of $V$ in direction $v$,
and we define 
\[ \nabla_\x V := \big( \partial_\x^{e_j} V\big)_{j=1,\dots,d}\,,\]
where $e_j,\,j=1,\dots, d$, denotes the $j$-th unit vector in $\R^d$. The second vertical derivative 
\[ \nabla_\x^2 V = \big( \partial_\x^{e_j}  \partial_\x^{e_k}V\big)_{j,k=1,\dots,d}\]
 is defined by iteration. The function $V$ is said to be of class $\C^{1,2}(\cX_{\mathrm D})$ if the derivatives $\dt V$, $\nabla_\x V$
and $\nabla_{\x}^2V$ exist and are jointly continuous in $(t,\x)\in [0,T]\times \cX_{\mathrm D}$ (see \cite[Section~1.3]{Cont-Perkowski19} for details). If additionally the restrictions of these derivatives to $\cX$ belong to $C_{b,\mathrm{loc}}([0,T)\times\cX)$,
we write $V\in \C^{1,2}(\cX_{\mathrm D})\cap \C^{1,2}_{b,\mathrm{loc}}(\cX)$.
In the following, $\operatorname{tr}(\cdot)$ stands for the trace of a matrix.
\end{defn}

\begin{thm}\label{Thm:Ito-Dupire}
    Let $V\colon [0,T]\to C_b(\cX)$ be adapted, and assume that for each $t\in [0,T]$, the function $V(t)$ admits an extension to $V(t)\in C_b(\cX_{\mathrm D})$ such that $V$ is of class $\C^{1,2}(\cX_{\mathrm D})\cap \C^{1,2}_{b,\mathrm{loc}}(\cX)$. Let $\E$ be the expectation operator related to the It\^o diffusion \eqref{eq-sde} as discussed above. 
    \begin{enumerate}
[\upshape (a)]
\item Define 
   \[ \Psi(t,\x) := \dt V(t,\x)+\langle b(\x(t)), \nabla_\x V(t,\x)\rangle 
   + \tfrac 12\operatorname{tr} \Big( \sigma(\x(t))\sigma(\x(t))^\top \nabla_\x^2 V(t,\x)\Big).\]
   Then $\Psi\in C_{b,\mathrm{loc}}([0,T)\times \cX) \cap \mathfrak{L}^1(0,T;C_b(\cX))$ is adapted 
   and  $M(t):=V(t)-\int_0^t \Psi(s)\,ds$ is an $\E$-martingale.
\item $V$ is continuously $\E$-differentiable with $\partial_t^\E V = \Psi$, and $U(t):= \Theta_{-t}V(t)$ is the unique continuous 
mild solution of the FVP \eqref{eq:FVP:A*}. 
\item $V$ is an $\E$-martingale if and only if for all $\x\in \cX$, $V$ is a solution of the Dupire equation
   \[ \dt V(t,\x) + \langle b(\x(t)), \nabla_\x V(t,\x)\rangle 
   + \tfrac 12\operatorname{tr} \Big( \sigma(\x(t))\sigma(\x(t))^\top \nabla_\x^2 V(t,\x)\Big)=0.\]
\end{enumerate}
\end{thm}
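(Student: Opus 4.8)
The backbone of the argument is the functional Itô formula for path-dependent functionals of class $\C^{1,2}$; once part (a) is established, parts (b) and (c) follow almost formally from the compensation machinery of Section~\ref{sec:adapted}. The plan is therefore to prove (a) first, namely that $M(t)=V(t)-\int_0^t\Psi(s)\,ds$ is an $\E$-martingale, and then to read off (b) directly from Theorem~\ref{t:Edifferentiable} and (c) from Corollary~\ref{cor:quadraticvariation}.

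For (a), I would fix $\x\in\cX$ and apply the functional Itô formula in its c\`adl\`ag formulation (this is precisely why $V$ is assumed to extend to a $\C^{1,2}(\cX_{\mathrm D})$ functional, see \cite{Cont-Perkowski19}) to the semimartingale $t\mapsto \x\otimes_0 X^{\x(0)}$. Substituting $dX_t^{\x(0)}=b(X_t^{\x(0)})\,dt+\sigma(X_t^{\x(0)})\,dB_t$ and $d\langle X^{\x(0)}\rangle_t=\sigma\sigma^\top(X_t^{\x(0)})\,dt$, the formula decomposes $t\mapsto V(t,\x\otimes_0 X^{\x(0)})$ into a finite-variation part whose density is exactly $\Psi(t,\x\otimes_0 X^{\x(0)})$ and a stochastic integral $\int_0^t\langle\nabla_\x V(s),\sigma(X_s^{\x(0)})\,dB_s\rangle$. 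Since the integrand is locally bounded ($\nabla_\x V\in C_{b,\mathrm{loc}}$ and $\sigma$ bounded), this stochastic integral is a true $P$-martingale on each $[0,T_0]$ with $T_0<T$; together with the local boundedness of $\Psi$ this shows $M$ to be a $P$-martingale on $[0,T_0]$, and then, using $V\in C_b([0,T]\times\cX)$ and the integrability of $\Psi$, on all of $[0,T]$. By the description of $\P^\x$ recalled just before the theorem, this is exactly the assertion that $M$ is a $\P^\x$-martingale for every $\x$. The regularity claims on $\Psi$ split into two parts: that $\Psi\in C_{b,\mathrm{loc}}([0,T)\times\cX)$ is immediate from continuity and local boundedness of the Dupire derivatives together with boundedness of $b,\sigma$, whereas $\Psi\in\mathfrak{L}^1(0,T;C_b(\cX))$ is the genuinely delicate endpoint point, since $\C^{1,2}_{b,\mathrm{loc}}(\cX)$ controls the derivatives in sup-norm only on $[0,T_0]$ for $T_0<T$.

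To upgrade from a family of $\P^\x$-martingales to a genuine $\E$-martingale I would invoke Lemma~\ref{l.pmartingale}, which requires two structural facts: that the evolutionary semigroup $\S$ attached to the Itô-diffusion operator $\E$ is a $C_b$-semigroup (following from the Feller property of the diffusion under the bounded-Lipschitz hypotheses), and that $\E$ has full support. The latter is where uniform positive definiteness of $\sigma\sigma^\top$ enters: the Stroock--Varadhan support theorem \cite[Theorem~3.1]{Stroock-Varadhan72}, in the path-dependent form of \cite{ck20}, guarantees that $X^{\x(0)}$ has full topological support in path space, so that $\E|F|=0$ forces $F=0$, exactly as in the Brownian computation of Example~\ref{Ex:4.5}. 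With these two facts, Lemma~\ref{l.pmartingale} yields that $M$ is an $\E$-martingale, completing (a).

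Given (a), part (b) is immediate: $\Psi$ is adapted and lies in $\mathfrak{L}^1(0,T;C_b(\cX))\cap C_{b,\mathrm{loc}}([0,T)\times\cX)$, so condition (ii) of Theorem~\ref{t:Edifferentiable} holds and that theorem delivers continuous $\E$-differentiability with $\partial_t^\E V=\Psi$ together with the assertion that $U(t)=\Theta_{-t}V(t)$ is the unique continuous mild solution of \eqref{eq:FVP:A*}. For part (c), I would observe that $V$ is an $\E$-martingale if and only if $V-M=\int_0^{\cdot}\Psi(s)\,ds$ is an $\E$-martingale; by Corollary~\ref{cor:quadraticvariation} this forces $\Psi\equiv 0$, i.e.\ the Dupire equation, while conversely $\Psi\equiv 0$ gives $V=M$, which is an $\E$-martingale by (a). The main obstacle is thus concentrated entirely in (a): reconciling the functional Itô formula, naturally stated on $\cX_{\mathrm D}$, with the continuous-path framework, and above all verifying the full-support hypothesis via Stroock--Varadhan so that Lemma~\ref{l.pmartingale} applies; the sup-norm integrability of $\Psi$ up to the terminal time is the remaining technical wrinkle to be extracted from the endpoint regularity hypotheses.
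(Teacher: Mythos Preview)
Your proposal is correct and follows essentially the same route as the paper: functional It\^o formula to obtain the $\P^\x$-martingale property, Stroock--Varadhan for full support, Lemma~\ref{l.pmartingale} to upgrade to an $\E$-martingale, then Theorem~\ref{t:Edifferentiable} for (b) and Corollary~\ref{cor:quadraticvariation} for (c). Your flagging of the endpoint integrability $\Psi\in\mathfrak{L}^1(0,T;C_b(\cX))$ as a wrinkle is fair; the paper simply asserts it from the hypotheses without further comment.
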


We remark that this result also yields a description of the operator $\A-\mathbb{L}$. Assume that in the 
situation of Theorem~\ref{Thm:Ito-Dupire}, the pointwise classical derivative of  $U(t):=\Theta_{-t} V(t)$ 
exists on $[0,T)$ and belongs to $C_{b,\mathrm{loc}}([0,T)\times X)$. Then we can apply Theorem~\ref{t.martDupire}
and get
\begin{align*}
& \big(\Theta_t (\A-\mathbb{L} ) \Theta_{-t} V\big) (t,\x) =  \Psi(t,\x)-\dt V(t,\x) \\
& \qquad =
\langle b(\x(t)), \nabla_\x V(t,\x)\rangle 
   + \tfrac 12\operatorname{tr} \Big( \sigma(\x(t))\sigma(\x(t))^\top \nabla_\x^2 V(t,\x)\Big).
\end{align*}
For $F:= U(t)$ with fixed $t\in [0,T]$, this implies
\[ (\A-\mathbb L)F(\x) = \langle b(\x(0)), \nabla_\x^0 F(\x)\rangle 
   + \tfrac 12\operatorname{tr} \Big( \sigma(\x(0))\sigma(\x(0))^\top (\nabla_\x^0)^2 F(\x)\Big).\]
Here, $\nabla_\x^0$ stands for the vertical Dupire derivative at $t=0$, i.e.,
\[ \nabla_{\x}^0 F (\x)\coloneqq \Big(\lim_{h\downarrow 0} \frac{F(\tau(\x)+ h e_j\one_{[0,\infty)})- F(\x)} {h}\Big)_{j=1,\dots,d}\,,\]
and $ (\nabla_\x^0)^2 $ is defined by iteration.

\begin{proof}[Proof of Theorem~\ref{Thm:Ito-Dupire}]
    (a). By the conditions on $V$ and on the coefficients $b$ and $\sigma$, we immediately get that $\Psi\in C_{b,\mathrm{loc}}([0,T)\times \cX)\cap \mathfrak{L}^1(0,T;C_b(\cX))$. 
    It is easily seen that $\nabla_\x V$ and $\nabla_\x^2V$ are 
    adapted.    To show that $M$ is an $\E$-martingale, we apply Lemma~\ref{l.pmartingale}. For this, we note that, as $\sigma\sigma^\top$ is uniformly positive definite, we can apply the support theorem of Stroock--Varadhan \cite[Theorem~3.1]{Stroock-Varadhan72} to see that $P\circ (X^{x})^{-1}$ has full support $\{\x^+\in \cX^+: 
    \x(0)=x\}$ for all $x\in X$. Therefore, we can argue as in Example~\ref{Ex:4.5} to see that 
    $\E|F|=0$ implies $F=0$ for $F\in C_b(\cX)$. 

    It remains to show that $M$ is a $\P^\x$-martingale for all $\x\in\cX$. As explained above, this is equivalent to the condition that $M(\cdot, \x\otimes_0 X^{\x(0)})$ is a $P$-martingale for all $\x\in\cX$. To prove this, we apply the pathwise 
    It\^o formula for path-dependent functionals (\cite[Theorem~3]{Cont-Fournie10}, see also \cite[Theorem~1.10]{Cont-Perkowski19}) and obtain for all paths $\y = \x\otimes_0 X^{\x(0)}$ 
    \begin{equation}\label{eq:6.8}
    \begin{aligned}
        V(t,\y) - V(0,\y) & = \int_0^t  \dt V(s,\y) \mathrm{d}s+ 
        \int_0^t \tfrac12  \operatorname{tr} \big( \nabla_\x^2 V(s,\y) \mathrm{d}[\y]_s\big) \\
        & \quad + \int_0^t \langle\nabla_\x V(s,\y) \mathrm{d}\y(s)\rangle .
    \end{aligned}
    \end{equation}
    For general paths $\y$ with finite quadratic variation, the last integral is defined as the limit over Riemannian sums. However, as we can see from \cite[Proposition~7]{Cont-Fournie10}, for 
    continuous semi-martingales, this coincides with the It\^o integral. Using \eqref{eq-sde}, the last integral
    in \eqref{eq:6.8} equals
    \[ \int_0^t \langle b(\y(s)), \nabla_\x V(s,\y)\rangle  \mathrm{d}s
    + \int_0^t \langle \nabla_\x V(x,\y), \sigma(\y(s)) \mathrm{d}B_s\rangle.\]
Moreover, it is known that for the It\^o diffusion, the quadratic variation
is given by 
\[  [\y]_t =  \int_0^t \sigma(\y(s))\sigma(\y(s))^\top \mathrm{d}s.\]
With this and the definition of $\Psi$, we see that 
\[ M(t,\y)-M(0,\y) = \int_0^t \langle \nabla_\x V(x,\y), \sigma(\y(s)) \mathrm{d}B_s\rangle\quad  \text{ for } \y=\x\otimes X^{\x(0)},\]
and therefore $M$ is a $\P^\x$ martingale for all $\x\in\cX$. Now we can apply Lemma~\ref{l.pmartingale} to get a).

(b). This follows from Theorem~\ref{t:Edifferentiable}.

(c). Due to (a), $V$ is an $\E$-martingale if and only if $t\mapsto \int_0^t\Psi(s)\mathrm{d}s$ is an $\E$-martingale. 
By Corollary~\ref{cor:quadraticvariation}, this is equivalent to $\int_0^\cdot \Psi(s)\mathrm{d}s \equiv 0$ and therefore to $\Psi\equiv 0$.
\end{proof}

\begin{rem}
   a)  The above theorem shows, in particular, that every solution of the Dupire equation is directly connected with a mild solution of \eqref{eq:FVP:A*} and therefore with the semigroup $\S$. Here, the rather strong assumptions on the smoothness of $V$ are only needed for the formulation of the Dupire equation and the application of the It\^o formula for path-dependent functionals, whereas the connection between $\E$-martingales and mild solutions hold in a very general setting. 

   b) In the case where the It\^o diffusion is given by the one-dimensional Brownian motion, one obtains the path-dependent heat equation 
   \[ \dt V(t,\x) + \partial_{\x}^2 V(t,\x) = 0\]
   which was considered, e.g.,  in \cite{Peng-Song-Wang23}, where we write $\partial_\x$ instead of $\nabla_\x$ 
   if $d=1$. 
\end{rem}

\begin{example}\label{Ex:Support-condition}
The following example illustrates the role of the support condition on $\E$. Consider \eqref{eq-sde} for $d=1$
with $\sigma\equiv 0$ and with $b\colon\R\to\R$ being a bounded and strictly decreasing Lipschitz function with
$b(0)=0$
(e.g., $b(x)=-\arctan x$). Then $X^x$ is the unique solution of the ordinary differential equation
\[ \frac{d}{dt}\, X^x_t = b(X^x_t),\quad X_0^x=x.\]
Due to the sign condition on $b$, we see that for initial value $x\in\R$, the function
$t\mapsto |X_t^x|$ is decreasing on $[0,\infty)$. We define, e.g., 
\[ V(t,\x) :=  \min\{ (|\x(t)|-|\x(0)|)^+, 1\}\]
for $t\in [0,T]$ and $\x\in\cX$. Then $V\colon[0,T]\to C_b(\cX)$ is adapted, and 
\[ V(t,\x\otimes_0 X^{\x(0)}) = 0\]
for all $t\in [0,T]$ and $x\in\cX$. Therefore, $V$ is a $\P^\x$-martingale for all $\x\in\cX$. However, for 
$\x(t)=t$, we get \[\E V(t,\x) = V(t, \x\otimes_0 X^{\x(0)}) = V(t,\x\otimes_0 0)=0 < \min\{t,1\} = V(t,\x)\]
for $t\in (0,T]$. Therefore, $V$ is no $\E$-martingale.

Similarly, let $V$ be a smooth function  as in Theorem~\ref{Thm:Ito-Dupire}, and assume that 
 $V$ is a $\P^\x$-martingale for all $\x\in \cX$. Then the Dupire equation in Theorem~\ref{Thm:Ito-Dupire} (c)
is satisfied for all paths of the form $\x\otimes_0 X^{\x(0)}$, but in general not for all $\x\in\cX$.
\end{example}

\end{document}